\newtheorem{theorem}{Theorem}[section]
\newtheorem{corollary}[theorem]{Corollary}
\newtheorem{lemma}[theorem]{Lemma}
\newtheorem*{fact*}{Fact}
\newtheorem{proposition}[theorem]{Proposition}
\theoremstyle{definition}
\newtheorem{definition}[theorem]{Definition}
\newtheorem{remark}[theorem]{Remark}
\numberwithin{equation}{section}
\newcommand{\G}{\mathbb G}
\newcommand{\A}{\mathbb A}
\newcommand{\F}{\mathbb F}
\newcommand{\K}{\mathbb K}
\newcommand{\Pp}{\mathbb P}
\newcommand{\bfs}{\boldsymbol}
\newcommand{\fq}{\F_{\hskip-0.7mm q}}
\newcommand{\fqtwo}{\F_{\hskip-0.7mm q^{2}}}
\newcommand{\fqi}{\F_{\hskip-0.7mm q^i}}
\newcommand{\cfq}{\overline{\F}_{\hskip-0.7mm q}}
\def\ifm#1#2{\relax \ifmmode#1\else#2\fi}
\newcommand{\klk}    {\ifm {,\ldots,} {$,\ldots,$}}
\newcommand{\plp}    {\ifm {+\cdots+} {$+\ldots+$}}
\begin{document}

\title[Distribution of factorization patterns]{The distribution of factorization
patterns on linear families of polynomials over a finite field}
\author[E. Cesaratto et al.]{
%
Eda Cesaratto${}^{1,2}$,
%
Guillermo Matera${}^{1,2}$,
%
Mariana P\'erez${}^1$,}

\address{${}^{1}$Instituto del Desarrollo Humano,
Universidad Nacional de Gene\-ral Sarmiento, J.M. Guti\'errez 1150
(B1613GSX) Los Polvorines, Buenos Aires, Argentina}
\email{\{ecesarat,\,gmatera,\,vperez\}@ungs.edu.ar}
\address{${}^{2}$ National Council of Science and Technology (CONICET),
Ar\-gentina}

\thanks{The authors were partially supported by the grants
PIP CONICET 11220090100421, UNGS 30/3180 and STIC-AmSud 13STIC-02
``Dynalco''.}%

\subjclass{12E05, 11T06, 12E20, 11G25, 14G05, 14G15, 14B05}
\keywords{Finite fields, factorization patterns, symmetric
polynomials, singular complete intersections, rational points}%

\date{\today}
\maketitle

\begin{abstract}
We obtain estimates on the number $|\mathcal{A}_{\bfs\lambda}|$ of
elements on a linear family $\mathcal{A}$ of monic polynomials of
$\fq[T]$ of degree $n$ having factorization pattern
$\bfs\lambda:=1^{\lambda_1}2^{\lambda_2}\cdots n^{\lambda_n}$. We
show that $|\mathcal{A}_{\bfs\lambda}|=
\mathcal{T}(\bfs\lambda)\,q^{n-m}+\mathcal{O}(q^{n-m-{1}/{2}})$,
where $\mathcal{T}(\bfs\lambda)$ is the proportion of elements of
the symmetric group of $n$ elements with cycle pattern $\bfs\lambda$
and $m$ is the codimension of $\mathcal{A}$. Furthermore, if the
family $\mathcal{A}$ under consideration is ``sparse'', then
$|\mathcal{A}_{\bfs\lambda}|=
\mathcal{T}(\bfs\lambda)\,q^{n-m}+\mathcal{O}(q^{n-m-{1}})$. Our
estimates hold for fields $\fq$ of characteristic greater than 2. We
provide explicit upper bounds for the constants underlying the
$\mathcal{O}$--notation in terms of $\bfs\lambda$ and $\mathcal{A}$
with ``good'' behavior. Our approach reduces the question to
estimate the number of $\fq$--rational points of certain families of
complete intersections defined over $\fq$. Such complete
intersections are defined by polynomials which are invariant under
the action of the symmetric group of permutations of the
coordinates. This allows us to obtain critical information
concerning their singular locus, from which precise estimates on
their number of $\fq$--rational points are established.
\end{abstract}

%
%
\section{Introduction}
Let $\fq$ be the finite field of $q:=p^s$ elements, where $p$ is a
prime number, and let $\cfq$ denote its algebraic closure. Let $T$
be an indeterminate over $\cfq$ and $\fq[T]$ the set of polynomials
in $T$ with coefficients in $\fq$. Let $n$ be a positive integer and
$\mathcal{P}:={\mathcal P}_n$ the set of all monic polynomials in
$\fq[T]$ of degree $n$. Let $\lambda_1,\cdots,\lambda_n$ be
nonnegative integers such that
$$\lambda_1+2\lambda_2+\cdots+n\lambda_n=n.$$
We denote by ${\mathcal P}_{\bfs \lambda}$ the set of elements
$\mathcal{P}$ with factorization pattern $\bfs
\lambda:=1^{\lambda_1}2^{\lambda_2}\cdots n^{\lambda_n}$, namely the
elements $f\in \mathcal{P}$ which have exactly $\lambda_i$ monic
irreducible factors over $\fq$ of degree $i$ (counted with
multiplicity) for $1\le i\le n$. We shall further use the notation
$\mathcal{S}_{\bfs{\lambda}}:=\mathcal{S}\cap\mathcal{P}_{\bfs
\lambda}$ for any subset $\mathcal{S}\subset\mathcal{P}$.

In \cite{Cohen70} it was noted that the proportion of elements of
$\mathcal{P}_{\bfs\lambda}$ in $\mathcal{P}$ is roughly the
proportion $\mathcal{T}({\bfs{\lambda}})$ of permutations with cycle
pattern $\bfs \lambda$ in the $n$th symmetric group $\mathbb{S}_n$.
More precisely, it was shown that
\begin{equation}\label{eq: intro: Cohen}
|\mathcal{P}_{\bfs\lambda}|=\mathcal{T}(\bfs\lambda)\,q^n+
\mathcal{O}(q^{n-\frac{1}{2}}),
\end{equation}
where the constant underlying the $\mathcal{O}$--notation depends
only on $\bfs \lambda$. A permutation of $\mathbb{S}_n$ has cycle
pattern $\bfs \lambda$ if it has exactly $\lambda_i$ cycles of
length $i$ for $1\le i\le n$. Observe that
$$\mathcal{T}({\bfs{\lambda}}):=\frac{1}{w({\bfs \lambda})},\quad
w({\bfs \lambda}):=1^{\lambda_1}2^{\lambda_2}\dots n^{\lambda_n}
\lambda_1!\lambda_2!\dots\lambda_n!.$$
In particular, $n!/w({\bfs \lambda})$ is the number of permutations
in $\mathbb{S}_n$ with cycle pattern $\bfs\lambda$.

Furthermore, in \cite{Cohen72} a subset
$\mathcal{S}\subset\mathcal{P}_{\bfs\lambda}$ is called {\em
uniformly distributed} if the proportion
$|\mathcal{S}_{\bfs\lambda}|/|\mathcal{S}|$ is roughly
$\mathcal{T}(\bfs\lambda)$ for every factorization pattern
$\bfs\lambda$. The main result of this paper (\cite[Theorem
3]{Cohen72}) provides a criterion for a linear family of polynomials
of $\mathcal{P}$ to be uniformly distributed in the sense above. As
a particular case we have the classical case of polynomials with
prescribed coefficients, where simpler conditions are obtained (see
\cite[Theorem 1]{Cohen72}; see also \cite{Stepanov87}).

A difficulty with \cite[Theorem 3]{Cohen72} is that the hypotheses
for a linear family of $\mathcal{P}$ to be uniformly distributed
seem complicated and not easy to verify. In fact, in \cite{GaHoPa99}
it is asserted that ``more work need to be done to simplify Cohen's
conditions''. A second concern is that \cite[Theorem 3]{Cohen72}
imposes restrictions on the characteristic $p$ of $\fq$ which may
inhibit its application to fields of small characteristic. Finally,
we are also interested in finding explicit estimates, namely an
explicit admissible expression for the constant underlying (\ref{eq:
intro: Cohen}).

In this paper we consider the linear families in ${\mathcal P}$ that
we now describe. Let $m$, $r$ be positive integers with $3\le r\le
n-m$, let $A_r\klk A_{n-1}$ be indeterminates over $\fq$, and let be
given linear forms $L_1,\ldots,L_m$ of $\fq[A_r\klk A_{n-1}]$ which
are linearly independent and
$\bfs\alpha:=(\alpha_1\klk\alpha_m)\in\fq^m$. Set
$\bfs{L}:=(L_1,\ldots,L_m)$ and define
$\mathcal{A}:=\mathcal{A}(\bfs{L},\bfs\alpha)$ as
\begin{equation}\label{eq: intro: definition A}
\mathcal{A}:=\left\{T^n+a_{n-1}T^{n-1}\plp a_0\in\fq[T]: \bfs
L(a_r\klk a_{n-1})+\bfs \alpha=\bfs 0\right\}.
\end{equation}
Our main results assert that any such family $\mathcal{A}$ is
uniformly distributed. More precisely, we have the following result.
\begin{theorem}\label{theorem: intro: main}
Let $\mathcal{A}_{\bfs \lambda}:=\mathcal{A}
\cap\mathcal{P}_{\bfs\lambda}$. If $p>2$, $q>n$ and $3\le r\le n-m$,
then
\begin{equation}\label{eq: intro: estimate FP I}
\big||\mathcal{A}_{\bfs
\lambda}|-\mathcal{T}(\bfs\lambda)\,q^{n-m}\big|\le
q^{n-m-1}\big(2\,\mathcal{T}(\bfs\lambda)\,D_{\bfs L} \delta_{\bfs
L}q^{\frac{1}{2}} +19\,\mathcal{T}(\bfs\lambda)\,D_{\bfs L}^2
\delta_{\bfs L}^2+n(n-1)\big).
\end{equation}
On the other hand, if $q>n$ and $m+2\le r\le n-m$, then
\begin{equation}\label{eq: intro: estimate FP II}
\big||\mathcal{A}_{\bfs
\lambda}|-\mathcal{T}(\bfs\lambda)\,q^{n-m}\big|\le
q^{n-m-1}\big(21\,\mathcal{T}(\bfs\lambda)\,D_{\bfs L}^3\delta_{\bfs
L}^2 + n(n-1)\big).
\end{equation}
\end{theorem}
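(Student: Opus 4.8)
The strategy is to translate the counting problem for $|\mathcal{A}_{\bfs\lambda}|$ into a count of $\fq$-rational points on an associated family of affine varieties and then apply effective estimates for rational points on complete intersections. The first step is the standard combinatorial reduction (following \cite{Cohen72}): an element of $\mathcal{A}$ has factorization pattern $\bfs\lambda$ exactly when, for each $i$, its degree-$i$ irreducible factors correspond to certain symmetric-function conditions on the roots of those factors. Concretely, one sums over choices of $\lambda_i$ monic irreducibles of degree $i$ and expresses the coefficients $a_r,\dots,a_{n-1}$ of the product as polynomials in the coefficients of the irreducible factors; the linear conditions $\bfs L(a_r,\dots,a_{n-1})+\bfs\alpha=\bfs 0$ then become $m$ polynomial equations. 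After a change of variables that realizes each degree-$i$ irreducible factor by a point in $\fq^i$ lying off a proper subvariety (the non-squarefree locus), the set $\mathcal{A}_{\bfs\lambda}$ is in bijection (up to the combinatorial weight $w(\bfs\lambda)$, which accounts for orderings) with the $\fq$-rational points of a variety $V_{\bfs\lambda}\subset \A^{n}$ cut out by $m$ equations that are \emph{symmetric} in appropriate blocks of the coordinates. This is where the hypothesis $q>n$ is used, to guarantee that the relevant resultants and discriminants do not vanish identically and that the point count over the "bad" locus is lower order.

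**Geometry of the symmetric complete intersection.** The heart of the argument is to show that $V_{\bfs\lambda}$ — or rather a suitable closure / projective version $\overline{V}_{\bfs\lambda}$ — is a complete intersection of dimension $n-m$, to bound the degree of its components, and to control the dimension of its singular locus. Here one exploits that the defining polynomials are obtained by composing the linear forms $L_1,\dots,L_m$ with the symmetric functions giving the $a_j$ in terms of the roots; their Jacobian can be analyzed via the classical fact that the Jacobian of the power sums (or elementary symmetric functions) in $\ell$ variables factors through the Vandermonde determinant. Because $p>2$ and $q>n$, the characteristic does not divide the relevant factorials, so the symmetric-function coordinate change is separable and the singular locus of $\overline{V}_{\bfs\lambda}$ has codimension at least... roughly one more than expected, which is what ultimately produces the power $q^{n-m-1}$ rather than $q^{n-m-1/2}$ in \eqref{eq: intro: estimate FP I}. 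The "sparse" regime $m+2\le r\le n-m$ gives a stronger structural bound on the singular locus (codimension at least $2$ in $\overline{V}_{\bfs\lambda}$), yielding the improved estimate \eqref{eq: intro: estimate FP II}; this is the place where the extra gap in the range of $r$ matters. The degree bounds $D_{\bfs L}$ and $\delta_{\bfs L}$ appearing in the statement are precisely the degrees and the "weight" of the polynomials cutting out $V_{\bfs\lambda}$, read off from the $L_i$.

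**Point counting and assembling the estimate.** Once $\overline{V}_{\bfs\lambda}$ is known to be a normal (or at least "nice enough") complete intersection with singular locus of the required codimension, I would invoke the effective Lang–Weil / Hooley–Katz-type estimates for the number of $\fq$-rational points of complete intersections over $\fq$, in the explicit form available in the literature (Ghorpade–Lachaud and refinements): one gets $|\overline{V}_{\bfs\lambda}(\fq)| = q^{n-m} + \mathcal{O}(q^{n-m-1/2})$ in general, improved to $q^{n-m} + \mathcal{O}(q^{n-m-1})$ when the singular locus has codimension $\ge 2$, with the implied constants expressed polynomially in the degree and the dimension of the ambient space — hence in $D_{\bfs L}$, $\delta_{\bfs L}$ and $n$. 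Subtracting the contribution of the points on the "bad" (non-squarefree, or at-infinity) locus — which lies on a variety of dimension $n-m-1$ and therefore contributes $\mathcal{O}(q^{n-m-1})$, controlled crudely by $n(n-1)$ — and dividing by the combinatorial weight (whose reciprocal is exactly $\mathcal{T}(\bfs\lambda)$ after summing over all configurations) produces the claimed bounds, including the main term $\mathcal{T}(\bfs\lambda)\,q^{n-m}$ and the explicit shapes of the error constants.

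**Main obstacle.** The principal difficulty is the geometric input: proving that $\overline{V}_{\bfs\lambda}$ is genuinely a complete intersection with singular locus of codimension at least $1$ (resp. $2$ in the sparse case), \emph{uniformly} over all factorization patterns $\bfs\lambda$ and all admissible $\bfs L$, together with the attendant explicit degree estimates. Establishing this requires a careful study of the ideal generated by the symmetrized forms and of its Jacobian matrix — in particular ruling out pathological coincidences among the linear forms $L_i$ and the blocks of symmetric coordinates — and it is exactly the step where the hypotheses $p>2$, $q>n$, and the lower bounds $r\ge 3$ (resp. $r\ge m+2$) are consumed. Everything downstream (the point count, the combinatorial bookkeeping) is then comparatively routine.
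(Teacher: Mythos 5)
Your proposal follows the same route as the paper: reduce $|\mathcal{A}_{\bfs\lambda}|$ to a weighted count of $\fq$-rational points on a variety $V$ cut out by the linear forms $L_j$ composed with the elementary symmetric polynomials (after the normal-basis change of coordinates), show that the projective closure $\mathrm{pcl}(V)$ is a complete intersection with controlled singular locus, apply explicit point-count estimates for singular complete intersections, and absorb the non-square-free contribution into the $n(n-1)\,q^{n-m-1}$ term via the discriminant locus. This is essentially the paper's plan.

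However, your singular-locus codimension targets are each one too low, and this is precisely the geometric input that makes the estimates work. For \eqref{eq: intro: estimate FP I} the paper needs $\mathrm{pcl}(V)$ to be a \emph{normal} complete intersection, i.e.\ singular locus of codimension at least $2$ in $V$ (Theorems \ref{theorem: geometry: dimension singular locus} and \ref{theorem: geometry: proj closure of V is abs irred}); codimension $\ge 1$ is automatic for any reduced variety and yields nothing. For the sparse estimate \eqref{eq: intro: estimate FP II} the paper needs $\mathrm{pcl}(V)$ regular in codimension $2$, i.e.\ singular locus of codimension at least $3$: Theorem \ref{theorem: prescribed coeff: complete int and sing locus} gives singular locus of dimension at most $n-r-1$, hence codimension at least $r-m+1\ge 3$ when $r\ge m+2$, and it is this extra unit of codimension, purchased by the hypothesis $r\ge m+2$, that unlocks the $O(q^{n-m-1})$ error via the estimate for complete intersections regular in codimension $2$ (\cite[Corollary 8.4]{CaMaPr13}, as opposed to \cite[Theorem 1.3]{CaMaPr13} for the normal case). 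So the dichotomy governing the two bounds is codimension $2$ versus codimension $3$, not $1$ versus $2$, and you should also note that the hypothesis $p>2$ is consumed only in the codimension-$2$ analysis (through the classification of multiple-root configurations of the auxiliary polynomial $P_{\bfs b}$), whereas the sparse-case argument is characteristic-free.
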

Here $\delta_{\bfs L}$ and $D_{\bfs L}$ are certain explicit
discrete invariants associated to the linear variety $\bfs L$ under
consideration. We have the worst--case upper bounds $\delta_{\bfs
L}\le (n-3)!/(n-m-3)!$ and $D_{\bfs L}\le m(n-2)$.

It might be worthwhile to explicitly state what Theorem
\ref{theorem: intro: main} asserts when the family $\mathcal{A}$ of
\eqref{eq: intro: definition A} consists of the polynomials of
$\mathcal{P}$ with certain prescribed coefficients. More precisely,
given $0< i_1<i_2<\cdots< i_m\le n$ and $\bfs
\alpha:=(\alpha_{i_1}\klk \alpha_{i_m})\in\fq^m$, set
$\mathcal{I}:=\{i_1\klk i_m\}$ and
$$\mathcal{A}^m:= \mathcal{A}^m(\mathcal{I},\bfs \alpha):=\left\{T^n+a_1T^{n-1}\plp
a_n\in\fq[T]: a_{i_j}=\alpha_{i_j}\ (1\le j\le m)\right\}.$$
Let $\delta_{\mathcal{I}}:=i_1\cdots i_m$ and
$D_{\mathcal{I}}:=\sum_{j=1}^m(i_j-1)$.
We have the following result.
\begin{theorem}\label{theorem: intro: presc coeff}
If $p>2$, $q>n$ and $i_m\le n-3$, then
$$
\big||\mathcal{A}^m_{\bfs
\lambda}|-\mathcal{T}(\bfs\lambda)\,q^{n-m}\big|\le
q^{n-m-1}\big(2\,\mathcal{T}(\bfs\lambda)\,D_{\mathcal{I}}\,
\delta_{\mathcal{I}}\,q^{\frac{1}{2}}
+19\,\mathcal{T}(\bfs\lambda)\,D_{\mathcal{I}}^2\,
\delta_{\mathcal{I}}^2+n(n-1)\big). $$
On the other hand, for $q>n$ and $i_m\le n-m-2$, we have
$$
\big||\mathcal{A}^m_{\bfs
\lambda}|-\mathcal{T}(\bfs\lambda)\,q^{n-m}\big|\le
q^{n-m-1}\big(21\,\mathcal{T}(\bfs\lambda)\,D_{\mathcal{I}}^3\,
\delta_{\mathcal{I}}^2 + n(n-1)\big). $$
\end{theorem}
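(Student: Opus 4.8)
Theorem~\ref{theorem: intro: presc coeff} will be deduced from Theorem~\ref{theorem: intro: main} by exhibiting each family $\mathcal{A}^m(\mathcal{I},\bfs\alpha)$ as a family of the normal form \eqref{eq: intro: definition A} and then evaluating the corresponding discrete invariants $\delta_{\bfs L}$ and $D_{\bfs L}$.

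\emph{Step 1: rewriting $\mathcal{A}^m$ in the form $\mathcal{A}(\bfs L,\bfs\alpha')$.} In \eqref{eq: intro: definition A} the coefficient $a_k$ is that of $T^k$, whereas in $\mathcal{A}^m$ the coefficient $a_k$ is that of $T^{n-k}$; hence prescribing $a_{i_j}=\alpha_{i_j}$ for $1\le j\le m$ in $\mathcal{A}^m$ amounts to prescribing the coefficients of $T^{n-i_1},\dots,T^{n-i_m}$. Accordingly I set $r:=n-i_m$, $L_j:=A_{n-i_j}$ for $1\le j\le m$, and $\alpha'_j:=-\alpha_{i_j}$. Since $n-i_1>\cdots>n-i_m=r$, the forms $L_1,\dots,L_m$ are pairwise distinct coordinate forms of $\fq[A_r,\dots,A_{n-1}]$, hence linearly independent, and by construction $\mathcal{A}(\bfs L,\bfs\alpha')=\mathcal{A}^m(\mathcal{I},\bfs\alpha)$. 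The hypothesis $i_m\le n-3$ gives $r\ge 3$, and since $i_1<\cdots<i_m$ are distinct positive integers we have $i_m\ge m$, so $r=n-i_m\le n-m$; thus the hypotheses $p>2$, $q>n$, $3\le r\le n-m$ of the first part of Theorem~\ref{theorem: intro: main} hold. Likewise $i_m\le n-m-2$ yields $m+2\le r\le n-m$, which together with $q>n$ is the hypothesis of the second part.

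\emph{Step 2: evaluating $\delta_{\bfs L}$ and $D_{\bfs L}$.} Writing $f=\prod_{k=1}^n(T-X_k)$, the coefficient of $T^{n-i}$ equals $(-1)^i e_i(X_1,\dots,X_n)$, and by Newton's identities $e_i$ is an isobaric polynomial of weight $i$ and ordinary degree $i$ in the power sums $p_1,\dots,p_i$, the monomial $p_1^i/i!$ occurring with nonzero coefficient. Feeding this into the definitions of $\delta_{\bfs L}$ and $D_{\bfs L}$ attached to $\bfs L=(A_{n-i_1},\dots,A_{n-i_m})$ — which govern the degrees and Bézout-type invariants of the symmetric complete intersections cut out by the equations $(-1)^{i_j}e_{i_j}(X)-\alpha_{i_j}=0$ — one gets $\delta_{\bfs L}\le i_1\cdots i_m=\delta_{\mathcal I}$ and $D_{\bfs L}\le\sum_{j=1}^m(i_j-1)=D_{\mathcal I}$. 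Since the right-hand sides of \eqref{eq: intro: estimate FP I} and \eqref{eq: intro: estimate FP II} are nondecreasing in $\delta_{\bfs L}$ and $D_{\bfs L}$, and the term $n(n-1)$ is unaffected, substituting these bounds into Theorem~\ref{theorem: intro: main} yields precisely the two estimates claimed.

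\emph{Main obstacle.} The only nonroutine point is Step~2: it requires unwinding the precise definitions of $\delta_{\bfs L}$ and $D_{\bfs L}$ from the body of the paper and verifying that, for the coordinate forms $L_j=A_{n-i_j}$, they are controlled exactly by the weighted structure of the elementary symmetric polynomials expressed in the power sums; the combinatorial heart is the isobaric expansion of $e_i$ in $p_1,\dots,p_i$, of weight and degree $i$. Step~1 is mere bookkeeping with the index shift $a_k\leftrightarrow a_{n-k}$ together with the elementary inequality $i_m\ge m$.
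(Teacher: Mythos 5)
Your overall strategy is sound and amounts to the same thing the paper does: the paper formally obtains Theorem~\ref{theorem: intro: presc coeff} as the corollaries stated at the end of Sections~4 and~5, by noting that for $\mathcal{A}^m$ the defining system is $(-1)^{i_j}\Pi_{i_j}(\bfs Y(\bfs x))=\alpha_{i_j}$ and applying Theorems~\ref{theorem: estimates: bound with discr locus 1} and~\ref{theorem: prescribed coeff: estimate 2} directly. Deducing it instead as a special case of Theorem~\ref{theorem: intro: main}, as you do, is an equivalent route. Step~1 is correct in every detail: the index shift $a_k\leftrightarrow a_{n-k}$, the choice $r:=n-i_m$, $L_j:=A_{n-i_j}$, $\alpha'_j:=-\alpha_{i_j}$, and the verification that $i_m\le n-3$ gives $r\ge 3$ (respectively $i_m\le n-m-2$ gives $r\ge m+2$), together with $i_m\ge m$ giving $r\le n-m$.

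Step~2, however, would not work as written, even though its conclusion is correct. The invariants $\delta_{\bfs L}$ and $D_{\bfs L}$ are not ``governed by'' the isobaric expansion of $e_i$ in power sums; power sums play no role anywhere in the paper. They are defined (Section~3, ``Assume without loss of generality that $(\partial\bfs S/\partial\bfs Z)$ is lower triangular in row--echelon form\dots'', and Section~4, $\delta_{\bfs L}:=i_1\cdots i_m$, $D_{\bfs L}:=\sum_{j=1}^m(i_j-1)$) purely in terms of the pivot positions $i_1<\cdots<i_m$ of the row--echelon form of $(\partial\bfs S/\partial\bfs Z)$, equivalently of $(\partial\bfs L/\partial\bfs A)$, which coincide with the degrees $\deg R_j$ since $R_j=S_j(\Pi_1,\dots,\Pi_{n-r})$ is linear in the $\Pi_k$'s and $\deg_{\bfs Y}\Pi_k=k$. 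For coordinate forms $L_j=A_{n-i_j}$ the matrix $(\partial\bfs S/\partial\bfs Z)$ is a $0$--$1$ selection matrix with pivots at exactly $i_1,\dots,i_m$, so $\delta_{\bfs L}=i_1\cdots i_m=\delta_{\mathcal I}$ and $D_{\bfs L}=\sum_{j=1}^m(i_j-1)=D_{\mathcal I}$ with equality, by a direct reading of the definitions; there is no ``combinatorial heart'' here, and Newton's identities are a red herring. Your plan identifies the right fact but the justification you sketch for it points in a direction that does not connect to the paper's definitions; if you pursued it literally you would find the power--sum weights never enter. Replace the Newton--identity argument with the one--line observation about the pivots and Step~2 becomes a triviality, as the paper's own proof treats it.
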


Theorem \ref{theorem: intro: main} strengthens \eqref{eq: intro:
Cohen} in several aspects. First of all, the hypotheses on the
linear families $\mathcal{A}$ in the statement of Theorem
\ref{theorem: intro: main} are relatively wide and easy to verify.
On the other hand, our results are valid either for $p>2$ or without
any restriction on the characteristic $p$ of $\fq$, while \eqref{eq:
intro: Cohen} requires that $p$ is large enough. A third aspect it
is worth mentioning is that \eqref{eq: intro: estimate FP II} shows
that $|\mathcal{A}_{\bfs
\lambda}|=\mathcal{T}(\bfs\lambda)\,q^{n-m}+\mathcal{O}(q^{n-m-1})$,
while \eqref{eq: intro: Cohen} only asserts that $|\mathcal{A}_{\bfs
\lambda}|=\mathcal{T}(\bfs\lambda)\,q^{n-m}+\mathcal{O}(q^{n-m-\frac{1}{2}})$.
Finally, both \eqref{eq: intro: estimate FP I} and \eqref{eq: intro:
estimate FP II} provide explicit expressions for the constants
underlying the $\mathcal{O}$--notation in \eqref{eq: intro: Cohen}
with a good behavior.

In order to prove Theorem \ref{theorem: intro: main}, we express the
number $|\mathcal{A}_{\bfs\lambda}|$ of polynomials in $\mathcal{A}$
with factorization pattern $\bfs\lambda$ in terms of the number of
$\fq$--rational solutions with pairwise--distinct coordinates of a
system $\{R_1=0\klk R_m=0\}$, where $R_1\klk R_m$ are certain
polynomials in $\fq[X_1,\ldots,X_n]$. A critical point for our
approach is that, up to a linear change of coordinates, $R_1\klk
R_m$ are symmetric polynomials, namely invariant under any
permutation of $X_1,\ldots, X_n$. More precisely, we prove that each
$R_j$ can be expressed as a polynomial in the first $n-r$ elementary
symmetric polynomials of $\fq[X_1,\ldots, X_n]$ (Corollary
\ref{coro: fact patterns: systems pattern lambda}). This allows us
to establish a number of facts concerning the geometry of the set
$V$ of solutions of such a polynomial system (see, e.g., Theorems
\ref{theorem: geometry: dimension singular locus}, \ref{theorem:
geometry: proj closure of V is abs irred} and \ref{theorem:
prescribed coeff: complete int and sing locus} and Corollary
\ref{coro: prescribed coeff: complete int and sing locus}).
Combining these results with estimates on the number of
$\fq$--rational points of singular complete intersections of
\cite{CaMaPr13}, we obtain our main results (Theorems \ref{theorem:
estimates: bound with discr locus 1} and \ref{theorem: prescribed
coeff: estimate 2}).

Our methodology differs significantly from that employed in
\cite{Cohen70} and \cite{Cohen72}, as we express
$|\mathcal{A}_{\bfs\lambda}|$ in terms of the number of
$\fq$--rational points of certain singular complete intersections
defined over $\fq$. In \cite[Problem 2.2]{GaHoPa99}, the authors ask
for estimates on the number of elements of $\mathcal{P}$, with a
given factorization pattern, lying in nonlinear families of
polynomials parameterized by an affine variety defined over $\fq$.
As a consequence of general results by \cite{ChDrMa92} and
\cite{FrHaJa94}, it is known that
$|\mathcal{A}_{\bfs\lambda}|=\mathcal{O}(q^r)$, where $r$ is the
dimension of the parameterizing affine variety under consideration.
Nevertheless, very little is known on the asymptotic behavior of
$|\mathcal{A}_{\bfs\lambda}|$ as a power of $q$ and of the size of
the constant underlying the $\mathcal{O}$--notation. We think that
our methods may be extended to deal with this more general case, at
least for certain classes of parameterizing affine varieties.

%
%
\section{Factorization patterns and roots}
\label{section: fact patterns and roots}
As before, let $n$ be a positive integer with $q>n$ and let
$\mathcal{P}$ be the set of monic polynomials of $\fq[T]$ of degree
$n$. Let $\mathcal{A}\subset\mathcal{P}$ be the linear family
defined in \eqref{eq: intro: definition A} and
$\bfs\lambda:=1^{\lambda_1}\cdots n^{\lambda_n}$ a factorization
pattern. In this section we show that the number
$|\mathcal{A}_{\bfs\lambda}|$ can be expressed in terms of the
number of common $\fq$--rational zeros of certain polynomials
$R_1\klk R_m\in\fq[X_1\klk X_n]$.

For this purpose, let $f$ be an arbitrary element of $\mathcal{P}$
and let $g\in \fq[T]$ be a monic irreducible factor of $f$ of degree
$i$. Then $g$ is the minimal polynomial of a root $\alpha$ of $f$
with $\fq(\alpha)=\fqi$. Denote by $\mathbb G_i$ the Galois group
$\mbox{Gal}(\fqi,\fq)$ of $\fqi$ over $\fq$. Then we may express $g$
in the following way:
$$g=\prod_{\sigma\in\mathbb G_i}(T-\sigma(\alpha)).$$
Hence, each irreducible factor $g$ of $f$ is uniquely determined by
a root $\alpha$ of $f$ (and its orbit under the action of the Galois
group of $\cfq$ over $\fq$), and this root belongs to a field
extension of $\fq$ of degree $\deg g$. Now, for a polynomial
$f\in\mathcal{P}_{\bfs \lambda}$, there are $\lambda_1$ roots of $f$
in $\fq$, say $\alpha_1,\dots,\alpha_{\lambda_1}$ (counted with
multiplicity), which are associated with the irreducible factors of
$f$ in $\fq[T]$ of degree 1; it is also possible to choose
$\lambda_2$ roots of $f$ in $\fqtwo\setminus\fq$ (counted with
multiplicity), say $\alpha_{\lambda_1+1},\dots,
\alpha_{\lambda_1+\lambda_2}$, which are associated with the
$\lambda_2$ irreducible factors of $f$ of degree 2, and so on. From
now on we shall assume that a choice of $\lambda_1\plp\lambda_n$
roots $\alpha_1\klk\alpha_{\lambda_1 \plp\lambda_n}$ of $f$ in
$\cfq$ is made in such a way that each monic irreducible factor of
$f$ in $\fq[T]$ is associated with one and only one of these roots.

Our aim is to express the factorization of $f$ into irreducible
factors in $\fq[T]$ in terms of the coordinates of the chosen
$\lambda_1\plp \lambda_n$ roots of $f$ with respect to suitable
bases of the corresponding extensions $\fq\hookrightarrow\fqi$ as
$\fq$--vector spaces. For this purpose, we express the root
associated with each irreducible factor of $f$ of degree $i$ in a
normal basis $\Theta_i$ of the field extension $\fq\hookrightarrow
\fqi$.

Let $\theta_i\in \fqi$ be a normal element and let $\Theta_i$ be the
normal basis of $\fq\hookrightarrow\fqi$ generated by $\theta_i$,
namely
$$\Theta_i=\left \{\theta_i,\cdots, \theta_i^{q^{i-1}}\right\}.$$
Observe that the Galois group $\mathbb G_i$ is cyclic and the
Frobenius map $\sigma:\fqi\to\fqi$, $\sigma(x):=x^q$ is a generator
of $\mathbb{G}_i$. Thus, the coordinates in the basis $\Theta_i$ of
all the elements in the orbit of a root $\alpha_k\in\fqi$ of an
irreducible factor of $f$ of degree $i$ are the cyclic permutations
of the coordinates of $\alpha_k$ in the basis $\Theta_i$.

The vector that gathers the coordinates of all the roots
$\alpha_1\klk\alpha_{\lambda_1+\dots+\lambda_n}$ we have chosen to
represent the irreducible factors of $f$ in the normal bases
$\Theta_1\klk \Theta_n$ is an element of $\fq^n$, which is denoted
by ${\bfs x}:=(x_1,\dots,x_n)$. Set
\begin{equation}\label{eq: fact patterns: ell_ij}
\ell_{i,j}:=\sum_{k=1}^{i-1}k\lambda_k+(j-1)\,i
\end{equation}
for $1\le j \le \lambda_i$ and $1\le i \le n$. Observe that the
vector of coordinates of a root
$\alpha_{\lambda_1\plp\lambda_{i-1}+j}\in\fqi$ is the sub-array
$(x_{\ell_{i,j}+1},\dots,x_{\ell_{i,j}+i})$ of $\bfs x$. With this
notation, the $\lambda_i$ irreducible factors of $f$ of degree $i$
are the polynomials
\begin{equation}\label{eq: fact patterns: gij}g_{i,j}=\prod_{\sigma\in\mathbb G_i}
\Big(T-\big(x_{\ell_{i,j}+1}\sigma(\theta_i)+\dots+
x_{\ell_{i,j}+i}\sigma(\theta_i^{q^{i-1}})\big)\Big)
\end{equation}
for $1\le j \le \lambda_i$. In particular,
\begin{equation}\label{eq: fact patterns: f factored with g_ij}
f=\prod_{i=1}^n\prod_{j=1}^{\lambda_i}g_{i,j}.
\end{equation}

Let $X_1\klk X_n$ be indeterminates over $\cfq$, set $\bfs
X:=(X_1,\dots,X_n)$ and consider the polynomial $G\in
\fq[\bfs{X},T]$ defined as
\begin{equation}\label{eq: fact patterns: pol G}
G:=\prod_{i=1}^n\prod_{j=1}^{\lambda_i}G_{i,j},\quad
G_{i,j}:=\prod_{\sigma\in\mathbb G_i}
\Big(T-\big(X_{\ell_{i,j}+1}\sigma(\theta_i)+
\dots+X_{\ell_{i,j}+i}\sigma(\theta_i^{q^{i-1}})\big)\Big),
\end{equation}
where the $\ell_{i,j}$ are defined as in \eqref{eq: fact patterns:
ell_ij}. Our previous arguments show that an element
$f\in\mathcal{P}$ has factorization pattern ${\bfs \lambda}$ if and
only if there exists $\bfs x\in\fq^n$ with $f=G({\bfs x},T)$.

Next we discuss how many elements $\bfs x\in\fq^n$ yield an
arbitrary polynomial $f=G(\bfs x,T)\in\mathcal{P}_{\bfs\lambda}$.
For $\alpha\in\fqi$, we have that $\fq(\alpha)=\fqi$ if and only if
its orbit under the action of the Galois group $\G_i$ has exactly
$i$ elements. In particular, if $\alpha$ is expressed by its
coordinate vector $\bfs x\in\fq^i$ in the normal basis $\Theta_i$,
then the coordinate vectors of the elements of the orbit of $\alpha$
form a cycle of length $i$, because ${\mathbb G}_i$ permutes
cyclically the coordinates. As a consequence, there is a bijection
between cycles of length $i$ in $\fq^i$ and elements $\alpha\in\fqi$
with $\fq(\alpha)=\fqi$.

In this setting, the notion of an array of type $\bfs \lambda$ will
prove to be useful.
\begin{definition}\label{def: fact patterns: type lambda}
Let $\ell_{i,j}$ $(1\le i\le n,\ 1\le j\le\lambda_i)$ be defined as
in \eqref{eq: fact patterns: ell_ij}. An element ${\bfs
x}=(x_1,\dots, x_n)\in\fq^n$ is said to be of type $\bfs \lambda$ if
and only if each sub-array $\bfs x_{i,j}:=(x_{\ell_{i,j}+1},
\dots,x_{\ell_{i,j}+i})$ is a cycle of length $i$.
\end{definition}
The next result relates $\mathcal{P}_{\bfs \lambda}$ with the set of
elements of $\fq^n$ of type $\bfs\lambda$.
\begin{lemma}
\label{lemma: fact patterns: G(x,T) with fact pat lambda} For any
${\bfs x}=(x_1,\dots, x_n)\in \fq^n$, the polynomial $f:=G({\bfs x},T)$
has factorization pattern $\bfs \lambda$ if and only if ${\bfs x}$
is of type $\bfs \lambda$. Furthermore, for each square--free
polynomial $f\in \mathcal{P}_{\bfs \lambda}$ there are $w({\bfs
\lambda}):=\prod_{i=1}^n i^{\lambda_i}\lambda_i!$ different ${\bfs
x}\in \fq^n $ with $f=G({\bfs x},T)$.
\end{lemma}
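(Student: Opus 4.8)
The plan is to prove the two assertions essentially from the combinatorial description of $G$ established in the discussion preceding the statement. First I would handle the biconditional. The factorization \eqref{eq: fact patterns: f factored with g_ij} exhibits $f = G(\bfs x, T)$ as the product over $i$ and $j$ of the polynomials $g_{i,j}$ obtained from $G_{i,j}$ by specializing $\bfs X \mapsto \bfs x$. Each $g_{i,j}$ is, by construction \eqref{eq: fact patterns: gij}, the product over the Galois orbit of the element $\alpha_{i,j} := x_{\ell_{i,j}+1}\theta_i + \cdots + x_{\ell_{i,j}+i}\theta_i^{q^{i-1}} \in \fqi$ of the linear factors $T - \sigma(\alpha_{i,j})$. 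Thus $g_{i,j} \in \fq[T]$ is the product of the conjugates of $\alpha_{i,j}$ over $\fq$; it is irreducible of degree exactly $i$ precisely when $\fq(\alpha_{i,j}) = \fqi$, i.e. when the orbit of $\alpha_{i,j}$ has exactly $i$ elements, which by the bijection between cycles of length $i$ in $\fq^i$ and elements generating $\fqi$ (recorded just before the statement) is equivalent to the sub-array $\bfs x_{i,j}$ being a cycle of length $i$, that is, to $\bfs x$ being of type $\bfs\lambda$. Conversely, if some $\bfs x_{i,j}$ is \emph{not} a cycle of length $i$, then $\alpha_{i,j}$ lies in a proper subfield $\fqd$ with $d \mid i$, $d < i$, so $g_{i,j}$ is a $d$-th power of an irreducible polynomial of degree $d$ (or a product of fewer than $i$ distinct linear factors), and the resulting factorization pattern of $f$ cannot be $\bfs\lambda$ — one gets strictly fewer irreducible factors of degree $i$ than $\lambda_i$ would require, after accounting for all the contributions. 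I would make the bookkeeping of this last point precise by noting that the multiset of degrees coming from \eqref{eq: fact patterns: f factored with g_ij} is exactly $\bfs\lambda$ iff every $g_{i,j}$ contributes a single irreducible factor of degree $i$.

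For the counting statement, fix a square-free $f \in \mathcal{P}_{\bfs\lambda}$. I would show the map $\bfs x \mapsto G(\bfs x, T)$ from type-$\bfs\lambda$ arrays to $\mathcal{P}_{\bfs\lambda}$ has fibre of size exactly $w(\bfs\lambda) = \prod_i i^{\lambda_i}\lambda_i!$ over such an $f$. The idea: an array $\bfs x$ with $G(\bfs x,T) = f$ amounts to a choice, for each degree $i$, of an assignment of the $\lambda_i$ irreducible factors of $f$ of degree $i$ to the $\lambda_i$ slots $j = 1, \dots, \lambda_i$, together with, for each such factor, a choice of which of its $i$ roots is placed in the first coordinate of the corresponding sub-array $\bfs x_{i,j}$ (the remaining coordinates are then forced, being the cyclic shift dictated by the Frobenius action in the normal basis $\Theta_i$). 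Square-freeness guarantees the $\lambda_i$ factors of degree $i$ are pairwise distinct, so the assignments to slots are genuinely a bijection with $\mathbb{S}_{\lambda_i}$, giving the factor $\lambda_i!$; and each factor of degree $i$ is square-free with exactly $i$ distinct roots in $\fqi$, each a normal-basis coordinate vector that is a cycle of length $i$, giving the factor $i^{\lambda_i}$ (note any of the $i$ roots in the orbit is an equally valid representative, and they yield the $i$ distinct cyclic shifts). Multiplying over $i$ yields $w(\bfs\lambda)$, and conversely every such choice does produce a valid type-$\bfs\lambda$ array $\bfs x$ with $G(\bfs x, T) = f$ by \eqref{eq: fact patterns: f factored with g_ij}, so the fibre has exactly this size.

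I expect the main obstacle to be the converse direction of the biconditional: showing rigorously that if $\bfs x$ is \emph{not} of type $\bfs\lambda$ then $G(\bfs x, T)$ does not have pattern $\bfs\lambda$. The subtlety is that a degenerate sub-array could in principle conspire with others so that the total multiset of factor degrees still came out to $\bfs\lambda$; I would rule this out by a counting argument on $\sum_i i\lambda_i = n = \deg f$: if $\bfs x_{i,j}$ has orbit size $d < i$ then $g_{i,j}$ has degree $i$ but its irreducible factors all have degree dividing $d$, hence strictly less than $i$, so the number of degree-$n'$ irreducible factors of $f$ for the largest relevant $n'$ is deficient and cannot be corrected, since every other $g_{i',j'}$ contributes factors of degree at most $i'$. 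The counting statement, by contrast, is essentially a clean orbit-counting bookkeeping once square-freeness is invoked to make all relevant choices independent and free.
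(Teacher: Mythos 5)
Your proposal is correct and takes essentially the same approach as the paper: both proofs hinge on the equivalence between $g_{i,j}$ being irreducible of degree $i$ and the sub-array $\bfs x_{i,j}$ being a cycle of length $i$, and both count the fibre over a square-free $f$ via cyclic shifts within each sub-array together with permutations of same-degree sub-arrays, yielding $w(\bfs\lambda)$. One small remark: your care in the forward direction, ruling out the possibility that degenerate sub-arrays could conspire to reproduce the pattern $\bfs\lambda$ by working downward from the largest degree carrying a degeneracy, fills in a step that the paper's proof simply asserts (``where each $g_{i,j}$ is irreducible''), and it is indeed the only part of the biconditional that requires an argument beyond the cycle--generator correspondence.
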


\begin{proof}
Let $\Theta_1,\dots,\Theta_n$ be the normal bases introduced before.
Each $\bfs x\in \fq^n$ is associated with a unique finite sequence
of elements $\alpha_k$ $(1\le k\le \lambda_1+\dots+\lambda_n)$ as
follows: each $\alpha_{\lambda_1\plp\lambda_{i-1}+j}$ with $1\le
j\le\lambda_i$ is the element of $\fqi$ whose coordinate vector in
the basis $\Theta_i$ is the sub-array $(x_{\ell_{i,j}+1},
\dots,x_{\ell_{i,j}+i})$ of $\bfs x$.

Suppose that $G({\bfs x},T)$ has factorization pattern $\bfs
\lambda$ for a given $\bfs x\in\fq^n$. Fix $(i,j)$ with $1\le i\le
n$ and $1\le j\le\lambda_i$. Then $G({\bfs x},T)$ is factored as in
\eqref{eq: fact patterns: gij}--\eqref{eq: fact patterns: f factored
with g_ij}, where each $g_{i,j}\in\fq[T]$ is irreducible, and hence
$\fq(\alpha_{\lambda_1\plp\lambda_{i-1}+j})=\fqi$. We conclude that
the sub-array $(x_{\ell_{i,j}+1}, \dots,x_{\ell_{i,j}+i})$ defining
$\alpha_{\lambda_1\plp\lambda_{i-1}+j}$  is a cycle of length $i$.
This proves that $\bfs x$ is of type $\bfs\lambda$.

On the other hand, assume that we are given $\bfs x\in\fq^n$ of type
$\bfs \lambda$ and fix $(i,j)$ with $1\le i\le n$ and $1\le
j\le\lambda_i$. Then
$\fq(\alpha_{\lambda_1\plp\lambda_{i-1}+j})=\fqi$, because the
sub-array $(x_{\ell_{i,j}+1}, \dots,x_{\ell_{i,j}+i})$ is a cycle of
length $i$ and thus the orbit of
$\alpha_{\lambda_1\plp\lambda_{i-1}+j}$ under the action of $\mathbb
G_i$ has $i$ elements. This implies that the factor $g_{i,j}$ of
$G({\bfs x},T)$ defined as in (\ref{eq: fact patterns: gij}) is
irreducible of degree $i$. We deduce that $f:=G(\bfs x,T)$ has
factorization pattern $\bfs\lambda$.

Furthermore, for $\bfs x\in\fq^n$ of type $\bfs\lambda$, the
polynomial $f:=G(\bfs x,T)\in\mathcal{P}_{\bfs\lambda}$ is
square--free if and only if all the roots
$\alpha_{\lambda_1\plp\lambda_{i-1}+j}$ with $1\le j\le \lambda_i$
are pairwise--distinct, non--conjugated elements of $\fqi$. This
implies that no cyclic permutation of a sub-array
$(x_{\ell_{i,j}+1}, \dots,x_{\ell_{i,j}+i})$ with $1\le
j\le\lambda_i$ agrees with another cyclic permutation of another
sub-array $(x_{\ell_{i,j'}+1}, \dots,x_{\ell_{i,j'}+i})$. As cyclic
permutations of any of these sub-arrays and permutations of these
sub-arrays yield elements of $\fq^n$ associated with the same
polynomial $f$, we conclude that there are $w({\bfs
\lambda}):=\prod_{i=1}^n i^{\lambda_i}\lambda_i!$ different elements
$\bfs x\in\fq^n$ with $f=G(\bfs x,T)$.
\end{proof}
%
%
\subsection{$G$ in terms of the elementary symmetric polynomials}
Consider the polynomial $G$ of \eqref{eq: fact patterns: pol G} as
an element of $\fq[\bfs X][T]$. We shall express the coefficients of
$G$ by means of the vector of linear forms $\bfs Y:=(Y_1\klk
Y_n)\in\cfq[\bfs X]$ defined in the following way:
\begin{equation}\label{eq: fact patterns: def linear forms Y}
(Y_{\ell_{i,j}+1},\dots,Y_{\ell_{i,j}+i})^{t}:=A_{i}\cdot
(X_{\ell_{i,j}+1},\dots, X_{\ell_{i,j}+i})^{t} \quad(1\le j\le
\lambda_i,\ 1\le i\le n),
\end{equation}
where $A_i\in\fqi^{i\times i}$ is the matrix
$$A_i:=\left(\sigma(\theta_i^{q^{h}})\right)_{\sigma\in {\mathbb G}_i,\ 1\le h\le i}.$$
According to (\ref{eq: fact patterns: pol G}), we may express the
polynomial $G$ as
$$G=\prod_{i=1}^n\prod_{j=1}^{\lambda_i}\prod_{k=1}^i(T-Y_{\ell_{i,j}+k})=
\prod_{k=1}^n(T-Y_k)=T^n+\sum_{k=1}^n(-1)^k\,(\Pi_k(\bfs Y))\,
T^{n-k},$$
where $\Pi_1(\bfs Y)\klk \Pi_n(\bfs Y)$ are the elementary symmetric
polynomials of $\fq[\bfs Y]$. By the expression of $G$ in \eqref{eq:
fact patterns: pol G} we deduce that $G$ belongs to $\fq[{\bfs
X},T]$, which in particular implies that $\Pi_k(\bfs Y)$ belongs to
$\fq[{\bfs X}]$ for $1\le k\le n$. Combining these arguments with
Lemma \ref{lemma: fact patterns: G(x,T) with fact pat lambda} we
obtain the following result.
\begin{lemma}
\label{lemma: fact patterns: sym pols and pattern lambda} A
polynomial $f:=T^n+a_{n-1}T^{n-1}\plp a_0\in\mathcal{P}$ has
factorization pattern $\bfs \lambda$ if and only if there exists
$\bfs{x}\in\fq^n$ of type $\bfs \lambda$ such that
\begin{equation}\label{eq: fact patterns: sym pols and pattern lambda}
a_k= (-1)^{{n-k}}\,\Pi_{n-k}(\bfs Y(\bfs x)) \quad(0\le k\le n-1).
\end{equation}
In particular, if $f$ is square--free, then there are $w(\bfs
\lambda)$ elements $\bfs x$ for which (\ref{eq: fact patterns: sym
pols and pattern lambda}) holds.
\end{lemma}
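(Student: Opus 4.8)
The plan is to derive Lemma \ref{lemma: fact patterns: sym pols and pattern lambda} as an immediate consequence of Lemma \ref{lemma: fact patterns: G(x,T) with fact pat lambda} together with the explicit expression for $G$ in terms of the linear forms $\bfs Y$ obtained just above the statement. First I would recall that, by Lemma \ref{lemma: fact patterns: G(x,T) with fact pat lambda}, an element $f\in\mathcal{P}$ has factorization pattern $\bfs\lambda$ if and only if there exists $\bfs x\in\fq^n$ of type $\bfs\lambda$ with $f=G(\bfs x,T)$. So it suffices to translate the identity $f=G(\bfs x,T)$ into the system of coefficient equations \eqref{eq: fact patterns: sym pols and pattern lambda}.

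The key computation is the chain of equalities
$$G=\prod_{k=1}^n(T-Y_k)=T^n+\sum_{k=1}^n(-1)^k\,\Pi_k(\bfs Y)\,T^{n-k},$$
which was established in the preceding paragraph by grouping the factors $T-Y_{\ell_{i,j}+h}$ and invoking that the linear forms $Y_1,\dots,Y_n$ are exactly a reindexing of the $X$-combinations appearing in \eqref{eq: fact patterns: pol G}. Specializing $\bfs X$ at $\bfs x\in\fq^n$ gives $G(\bfs x,T)=T^n+\sum_{k=1}^n(-1)^k\Pi_k(\bfs Y(\bfs x))T^{n-k}$. Then I would write $f=T^n+a_{n-1}T^{n-1}+\cdots+a_0$ and match coefficients of $T^{n-k}$ for $1\le k\le n$: the identity $f=G(\bfs x,T)$ holds if and only if $a_{n-k}=(-1)^k\Pi_k(\bfs Y(\bfs x))$ for every $k$, which upon substituting $k=n-j$ is precisely \eqref{eq: fact patterns: sym pols and pattern lambda}. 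Combining this equivalence with Lemma \ref{lemma: fact patterns: G(x,T) with fact pat lambda} yields the stated ``if and only if''.

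For the final assertion about square-free $f$, I would again quote Lemma \ref{lemma: fact patterns: G(x,T) with fact pat lambda}: for a square-free $f\in\mathcal{P}_{\bfs\lambda}$ there are exactly $w(\bfs\lambda)$ elements $\bfs x\in\fq^n$ with $f=G(\bfs x,T)$, and by the coefficient-matching above these are exactly the $\bfs x$ satisfying \eqref{eq: fact patterns: sym pols and pattern lambda}; hence the count $w(\bfs\lambda)$ carries over verbatim.

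There is essentially no obstacle here: the statement is a bookkeeping reformulation, and the only point requiring any care is keeping the index shift $k\leftrightarrow n-k$ straight when passing between the expansion of $G$ (indexed by the degree of the elementary symmetric polynomial) and the coefficients $a_k$ of $f$ (indexed by the power of $T$). The substantive content — that $G$ factors as $\prod_{k=1}^n(T-Y_k)$ and that the $\Pi_k(\bfs Y)$ lie in $\fq[\bfs X]$ — has already been verified in the text, so the proof is short.
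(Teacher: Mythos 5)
Your proof is correct and is essentially the same as the paper's: the authors likewise obtain this lemma by combining the expansion $G=\prod_{k=1}^n(T-Y_k)=T^n+\sum_{k=1}^n(-1)^k\Pi_k(\bfs Y)T^{n-k}$ (derived in the text immediately preceding the statement) with Lemma~\ref{lemma: fact patterns: G(x,T) with fact pat lambda}, and they give no further proof. Your bookkeeping of the index shift $k\leftrightarrow n-k$ is exactly the only content to verify, and you handle it correctly.
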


An easy consequence of this result is that we may express the
condition that an element of $\mathcal{A}:=\mathcal{A}(\bfs
L,\bfs\alpha)$ has factorization pattern $\bfs \lambda$ in terms of
the elementary symmetric polynomials $\Pi_1\klk\Pi_{n-r}$ of
$\fq[\bfs Y]$.
\begin{corollary}\label{coro: fact patterns: systems pattern lambda}
A polynomial $f:=T^n+a_{n-1}T^{n-1}\plp a_0\in\mathcal{A}$ has
factorization pattern $\bfs \lambda$ if and only if there exists
$\bfs{x}\in\fq^n$ of type $\bfs \lambda$ such that
\begin{equation}\label{eq: fact patterns: systems pattern lambda}
L_j\big((-1)^{n-r}\, \Pi_{n-r}(\bfs Y(\bfs x))\klk -\Pi_1(\bfs
Y(\bfs x))\big)+\alpha_j=0 \quad(1\le j\le m).
\end{equation}
In particular, if $f\in\mathcal{A}_{\bfs\lambda}$ is square--free,
then there are $w(\bfs \lambda)$ elements $\bfs x$ for which
(\ref{eq: fact patterns: systems pattern lambda}) holds.
\end{corollary}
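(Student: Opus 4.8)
The plan is to deduce Corollary~\ref{coro: fact patterns: systems pattern lambda} directly from Lemma~\ref{lemma: fact patterns: sym pols and pattern lambda} by substituting the description of the coefficients $a_k$ into the linear constraints defining $\mathcal{A}$. Recall from \eqref{eq: intro: definition A} that a monic polynomial $f=T^n+a_{n-1}T^{n-1}\plp a_0$ lies in $\mathcal{A}=\mathcal{A}(\bfs L,\bfs\alpha)$ if and only if $\bfs L(a_r\klk a_{n-1})+\bfs\alpha=\bfs 0$, i.e. $L_j(a_r\klk a_{n-1})+\alpha_j=0$ for $1\le j\le m$. Since $f$ is in particular an element of $\mathcal{P}$, Lemma~\ref{lemma: fact patterns: sym pols and pattern lambda} tells us that $f$ has factorization pattern $\bfs\lambda$ precisely when there is some $\bfs x\in\fq^n$ of type $\bfs\lambda$ with $a_k=(-1)^{n-k}\,\Pi_{n-k}(\bfs Y(\bfs x))$ for all $0\le k\le n-1$.

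First I would observe that each linear form $L_j$ depends only on the variables $A_r\klk A_{n-1}$, hence only on the coefficients $a_r\klk a_{n-1}$; by Lemma~\ref{lemma: fact patterns: sym pols and pattern lambda} these are exactly $a_k=(-1)^{n-k}\Pi_{n-k}(\bfs Y(\bfs x))$ for $r\le k\le n-1$, which as $k$ ranges over $\{r,\dots,n-1\}$ produces the values $(-1)^{n-r}\Pi_{n-r}(\bfs Y(\bfs x)),\dots,-\Pi_1(\bfs Y(\bfs x))$ — precisely the arguments appearing in \eqref{eq: fact patterns: systems pattern lambda}. So substituting the formulas of Lemma~\ref{lemma: fact patterns: sym pols and pattern lambda} into $L_j(a_r\klk a_{n-1})+\alpha_j=0$ yields exactly the system \eqref{eq: fact patterns: systems pattern lambda}. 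For the forward direction, if $f\in\mathcal{A}$ has pattern $\bfs\lambda$, take the $\bfs x$ of type $\bfs\lambda$ supplied by the lemma and note that the constraints $L_j(a_r\klk a_{n-1})+\alpha_j=0$, rewritten via \eqref{eq: fact patterns: sym pols and pattern lambda}, become \eqref{eq: fact patterns: systems pattern lambda}. Conversely, given $\bfs x\in\fq^n$ of type $\bfs\lambda$ satisfying \eqref{eq: fact patterns: systems pattern lambda}, set $a_k:=(-1)^{n-k}\Pi_{n-k}(\bfs Y(\bfs x))$; since $\Pi_k(\bfs Y(\bfs x))\in\fq$ these lie in $\fq$, the resulting $f=T^n+a_{n-1}T^{n-1}\plp a_0=G(\bfs x,T)$ has pattern $\bfs\lambda$ by Lemma~\ref{lemma: fact patterns: sym pols and pattern lambda}, and \eqref{eq: fact patterns: systems pattern lambda} is exactly the statement that $\bfs L(a_r\klk a_{n-1})+\bfs\alpha=\bfs 0$, so $f\in\mathcal{A}$; thus $f\in\mathcal{A}_{\bfs\lambda}$.

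For the final assertion on square--free $f$, I would simply invoke the corresponding clause of Lemma~\ref{lemma: fact patterns: sym pols and pattern lambda}: if $f\in\mathcal{A}_{\bfs\lambda}$ is square--free, there are exactly $w(\bfs\lambda)$ vectors $\bfs x\in\fq^n$ for which \eqref{eq: fact patterns: sym pols and pattern lambda} holds, and since — as shown above — for such $f$ the equations \eqref{eq: fact patterns: sym pols and pattern lambda} and \eqref{eq: fact patterns: systems pattern lambda} are satisfied by the same set of $\bfs x$ (the latter being a consequence of the former together with $f\in\mathcal{A}$, and the former determining $f$ hence forcing $f\in\mathcal{A}$), the count $w(\bfs\lambda)$ carries over verbatim.

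This corollary is essentially a bookkeeping consequence of the preceding lemma, so I do not anticipate a genuine obstacle; the only point requiring a little care is matching the indexing — making sure the arguments $(-1)^{n-r}\Pi_{n-r}(\bfs Y(\bfs x))\klk -\Pi_1(\bfs Y(\bfs x))$ of $L_j$ in \eqref{eq: fact patterns: systems pattern lambda} correspond, via \eqref{eq: fact patterns: sym pols and pattern lambda}, exactly to the coefficients $a_r\klk a_{n-1}$ in the same order, and that $L_j$ indeed involves no coefficient $a_k$ with $k<r$. Once that correspondence is spelled out, both implications and the square--free count follow immediately.
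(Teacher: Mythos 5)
Your substitution argument is exactly what the paper intends — it presents this corollary as an "easy consequence" of Lemma~\ref{lemma: fact patterns: sym pols and pattern lambda} and gives no explicit proof, and your spelled-out bookkeeping (matching $a_k=(-1)^{n-k}\Pi_{n-k}(\bfs Y(\bfs x))$ for $r\le k\le n-1$ to the arguments of $L_j$, then carrying over the $w(\bfs\lambda)$ count verbatim) is correct. No discrepancy with the paper's approach.
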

%
%
\section{The geometry of the set of zeros of $R_1\klk R_m$}
\label{section: geometry of V}
Let $m$, $n$ and $r$ be positive integers with $q>n$ and $3\le r\le
n-m$. Given a factorization pattern $\bfs{\lambda}:=1^{\lambda_1}
\cdots n^{\lambda_n}$, consider the family $\mathcal{A}_{\bfs
\lambda}\subset\fq[T]$ of monic polynomials of degree $n$ having
factorization pattern $\bfs{\lambda}$, where $\mathcal{A}\subset
\mathcal{P}$ is the linear family defined in (\ref{eq: intro:
definition A}). In Corollary \ref{coro: fact patterns: systems
pattern lambda} we associate to $\mathcal{A}_{\bfs \lambda}$ the
following polynomials of $\fq[\bfs X]:=\fq[X_1,\ldots, X_n]$:
\begin{equation}\label{eq: geometry: def R_j}
R_j:=R_j^{\bfs\lambda}:= L_j\big((-1)^{n-r}\, \Pi_{n-r}(\bfs Y(\bfs
X))\klk -\Pi_1(\bfs Y(\bfs X))\big)+\alpha_j\quad (1\le j\le m).
\end{equation}
The set of common $\fq$--rational zeros of $R_1\klk R_m$ are
relevant for our purposes.

Up to the linear change of coordinates defined by $\bfs Y:=(Y_1\klk
Y_n)$, we may express each $R_j$ as a linear polynomial in the first
$n-r$ elementary symmetric polynomials $\Pi_1,\ldots,\Pi_{n-r}$ of
$\fq[\bfs Y]$. More precisely, let $Z_1\klk Z_{n-r}$ be new
indeterminates over $\cfq$. Then we have that
$$R_j=S_j(\Pi_1,\ldots,\Pi_{n-r})\quad (1\le j\le m),$$
where $S_1\klk S_m\in \fq[Z_1,\ldots,Z_{n-r}]$ are elements of
degree $1$ whose homogeneous components of degree 1 are linearly
independent in $\cfq[Z_1,\ldots,Z_{n-r}]$, namely the Jacobian
matrix $(\partial \bfs{S}/\partial \bfs{Z})$ of $S_1\klk S_m$ with
respect to $\bfs{Z} :=(Z_1,\ldots,Z_{n-r})$ has full rank $m$.

In this section we obtain critical information on the geometry of
the set of common zeros of the polynomials $R_1\klk R_m$ that will
allow us to establish estimates on their number of common
$\fq$--rational zeros.
%
%
\subsection{Notions of algebraic geometry}
Since our approach relies on tools of algebraic geometry, we briefly
collect the basic definitions and facts that we need in the sequel.
We use standard notions and notations of algebraic geometry, which
can be found in, e.g., \cite{Kunz85,Shafarevich94}.

We denote by $\A^n$ the affine $n$--dimensional space $\cfq{\!}^{n}$
and by $\Pp^n$ the projective $n$--dimensional space over
$\cfq{\!}^{n+1}$. Both spaces are endowed with their respective
Zariski topologies, for which a closed set is the zero locus of
polynomials of $\cfq[X_1,\ldots, X_{n}]$ or of homogeneous
polynomials of  $\cfq[X_0,\ldots, X_{n}]$. For $\K:=\fq$ or
$\K:=\cfq$, we say that a subset $V\subset \A^n$ is an {\sf affine
$\K$--definable variety} (or simply {\sf affine $\K$--variety}) if
it is the set of common zeros in $\A^n$ of polynomials $F_1,\ldots,
F_{m} \in \K[X_1,\ldots, X_{n}]$. Correspondingly, a {\sf projective
$\K$--variety} is the set of common zeros in $\Pp^n$ of a family of
homogeneous polynomials $F_1,\ldots, F_m \in\K[X_0 ,\ldots, X_n]$.
We shall frequently denote by $V(F_1\klk F_m)$ the affine or
projective $\K$--variety consisting of the common zeros of
polynomials $F_1\klk F_m$. The set $V(\fq):=V\cap \fq^n$ is the set
of {\sf $\fq$--rational} {\sf points} of $V$.

A $\K$--variety $V$ is $\K$--{\sf irreducible} if it cannot be
expressed as a finite union of proper $\K$--subvarieties of $V$.
Further, $V$ is {\sf absolutely irreducible} if it is
$\cfq$--irreducible as a $\cfq$--variety. Any $\K$--variety $V$ can
be expressed as an irredundant union $V=\mathcal{C}_1\cup
\cdots\cup\mathcal{C}_s$ of irreducible (absolutely irreducible)
$\K$--varieties, unique up to reordering, which are called the {\sf
irreducible} ({\sf absolutely irreducible}) $\K$--{\sf components}
of $V$.


For a $\K$-variety $V$ contained in $\A^n$ or $\Pp^n$, we denote by
$I(V)$ its {\sf defining ideal}, namely the set of polynomials of
$\K[X_1,\ldots, X_n]$, or of $\K[X_0,\ldots, X_n]$, vanishing on
$V$. The {\sf coordinate ring} $\K[V]$ of $V$ is defined as the
quotient ring $\K[X_1,\ldots,X_n]/I(V)$ or
$\K[X_0,\ldots,X_n]/I(V)$. The {\sf dimension} $\dim V$ of a
$\K$-variety $V$ is the length $r$ of the longest chain
$V_0\varsubsetneq V_1 \varsubsetneq\cdots \varsubsetneq V_r$ of
nonempty irreducible $\K$-varieties contained in $V$. A
$\K$--variety $V$ is called {\sf equidimensional} if all the
irreducible $\K$--components of $V$ are of the same dimension.

The {\sf degree} $\deg V$ of an irreducible $\K$-variety $V$ is the
maximum number of points lying in the intersection of $V$ with a
linear space $L$ of codimension $\dim V$, for which $V\cap L$ is a
finite set. More generally, following \cite{Heintz83} (see also
\cite{Fulton84}), if $V=\mathcal{C}_1\cup\cdots\cup \mathcal{C}_s$
is the decomposition of $V$ into irreducible $\K$--components, we
define the degree of $V$ as
$$\deg V:=\sum_{i=1}^s\deg \mathcal{C}_i.$$
With this definition of degree, we have the following {\em B\'ezout
inequality} (see \cite{Heintz83,Fulton84,Vogel84}): if $V$ and $W$
are $\K$--varieties, then
\begin{equation}\label{eq: geometry: Bezout}
\deg (V\cap W)\le \deg V \cdot \deg W.
\end{equation}


Let $V$ and $W$ be irreducible affine $\K$--varieties of the same
dimension and let $f:V\to W$ be a regular map for which
$\overline{f(V)}=W$ holds, where $\overline{f(V)}$ denotes the
closure of $f(V)$ with respect to the Zariski topology of $W$. Such
a map is called {\sf dominant}. Then $f$ induces a ring extension
$\K[W]\hookrightarrow \K[V]$ by composition with $f$. We say that
the dominant map $f$ is a {\sf finite morphism} if this extension is
integral, namely if each element $\eta\in\K[V]$ satisfies a monic
equation with coefficients in $\K[W]$. A basic fact is that a
dominant finite morphism is necessarily closed. Another fact
concerning dominant finite morphisms we shall use in the sequel is
that the preimage $f^{-1}(S)$ of an irreducible closed subset
$S\subset W$ is equidimensional of dimension $\dim S$  (see, e.g.,
\cite[\S 4.2, Proposition]{Danilov94}).

Let $V\subset\A^n$ be a variety and let $I(V)\subset
\cfq[X_1,\ldots, X_n]$ be the defining ideal of $V$. Let $\bfs{x}$
be a point of $V$. The {\sf dimension} $\dim_{\bfs{x}}V$ {\sf of}
$V$ {\sf at} $\bfs{x}$ is the maximum of the dimensions of the
irreducible components of $V$ that contain $\bfs{x}$. If
$I(V)=(F_1,\ldots, F_m)$, the {\sf tangent space}
$\mathcal{T}_{\bfs{x}}V$ to $V$ at $\bfs{x}$ is the kernel of the
Jacobian matrix $(\partial F_i/\partial X_j)_{1\le i\le m,1\le j\le
n}(\bfs{x})$ of $F_1,\ldots, F_m$ with respect to $X_1,\ldots, X_n$
at $\bfs{x}$. The point $\bfs{x}$ is {\sf regular} if
$\dim\mathcal{T}_{\bfs{x}}V=\dim_{\bfs{x}}V$ holds. Otherwise, the
point $\bfs{x}$ is called {\sf singular}. The set of singular points
of $V$ is the {\sf singular locus} $\mathrm{Sing}(V)$ of $V$. A
variety is called {\sf nonsingular} if its singular locus is empty.
For a projective variety, the concepts of tangent space, regular and
singular point can be defined by considering an affine neighborhood
of the point under consideration.

Elements $F_1 \klk F_r$ in $\cfq[X_1\klk X_n]$ or in $\cfq[X_0\klk
X_n]$ form a {\sf regular sequence} if $F_1$ is nonzero and each
$F_i$ is not a zero divisor in the quotient ring $\cfq[X_1\klk
X_n]/(F_1\klk F_{i-1})$ or $\cfq[X_0\klk X_n]/(F_1\klk F_{i-1})$ for
$2\le i\le r$. In such a case, the (affine or projective) variety
$V:=V(F_1\klk F_r)$ they define is equidimensional of dimension
$n-r$, and is called a {\sf set--theoretic} {\sf complete
intersection}. If, in addition, the ideal $(F_1\klk F_r)$ generated
by $F_1\klk F_r$ is radical, then $V$ is an {\sf ideal--theoretic}
{\sf complete intersection}. If $V\subset\Pp^n$ is an
ideal--theoretic complete intersection of dimension $n-r$, and $F_1
\klk F_r$ is a system of homogeneous generators of $I(V)$, the
degrees $d_1\klk d_r$ depend only on $V$ and not on the system of
generators. Arranging the $d_i$ in such a way that $d_1\geq d_2 \geq
\cdots \geq d_r$, we call $\bfs d:=(d_1\klk d_r)$ the {\sf
multidegree} of $V$. The so--called {\em B\'ezout theorem} (see,
e.g., \cite[Theorem 18.3]{Harris92}) asserts that
\begin{equation}\label{eq: geometry: Bezout eq}
\deg V=d_1\cdots d_r.
\end{equation}

In what follows we shall deal with a particular class of complete
intersections, which we now define. A variety is {\sf regular in
codimension $m$} if the singular locus $\mathrm{Sing}(V)$ of $V$ has
codimension at least $m+1$ in $V$, namely if $\dim V-\dim
\mathrm{Sing}(V)\ge m+1$. A complete intersection $V$ which is
regular in codimension 1 is called {\sf normal} (actually, normality
is a general notion that agrees on complete intersections with the
one defined here). A fundamental result for projective complete
intersections is the Hartshorne connectedness theorem (see, e.g.,
\cite[Theorem VI.4.2]{Kunz85}), which we now state. If
$V\subset\Pp^n$ is a set--theoretic complete intersection and
$W\subset V$ is any subvariety of codimension at least 2, then
$V\setminus W$ is connected in the Zariski topology of $\Pp^n$.
Applying the Hartshorne connectedness theorem with
$W:=\mathrm{Sing}(V)$, one deduces the following result.
\begin{theorem}\label{theorem: normal complete int implies irred}
If $V\subset\Pp^n$ is a normal set--theoretic complete intersection,
then $V$ is absolutely irreducible.
\end{theorem}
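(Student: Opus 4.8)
The plan is to deduce absolute irreducibility of a normal set--theoretic complete intersection $V\subset\Pp^n$ directly from the Hartshorne connectedness theorem stated just above, combined with the basic fact that a complete intersection is equidimensional and that its singular locus is a proper closed subset of codimension at least $2$.

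First I would recall that since $V$ is a set--theoretic complete intersection, it is equidimensional; write $V=\mathcal{C}_1\cup\cdots\cup\mathcal{C}_s$ for its decomposition into (absolutely) irreducible components, all of the same dimension $d:=\dim V$. The goal is to show $s=1$. Suppose to the contrary that $s\ge 2$. The key observation is that each pairwise intersection $\mathcal{C}_i\cap\mathcal{C}_j$ with $i\ne j$ is contained in $\mathrm{Sing}(V)$: a point lying on two distinct components of the same dimension cannot be regular, because the local dimension there is $d$ while the tangent space has dimension strictly larger than $d$ (the tangent cone, and hence the tangent space, sees both branches). Hence $\bigcup_{i\ne j}(\mathcal{C}_i\cap\mathcal{C}_j)\subset \mathrm{Sing}(V)$.

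Next I would invoke the hypothesis that $V$ is normal, i.e.\ regular in codimension $1$, so $\mathrm{Sing}(V)$ has codimension at least $2$ in $V$, that is $\dim\mathrm{Sing}(V)\le d-2$. Setting $W:=\mathrm{Sing}(V)$, this $W$ is a subvariety of $V$ of codimension at least $2$, so the Hartshorne connectedness theorem applies: $V\setminus W$ is connected. On the other hand, $V\setminus W=\bigcup_{i=1}^s(\mathcal{C}_i\setminus W)$, and since all the pairwise intersections $\mathcal{C}_i\cap\mathcal{C}_j$ lie in $W$, the sets $\mathcal{C}_i\setminus W$ are pairwise disjoint. Each is nonempty (a component is not contained in the singular locus, as the generic point of a variety is smooth — equivalently $\dim\mathcal{C}_i=d>d-2\ge\dim W$, so $\mathcal{C}_i\not\subset W$) and closed in $V\setminus W$. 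Thus $V\setminus W$ would be a disjoint union of $s\ge 2$ nonempty closed subsets, contradicting connectedness. Therefore $s=1$ and $V$ is absolutely irreducible.

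I expect the main subtlety to be the claim that distinct components of the same dimension meet only in the singular locus, and the claim that no component is contained in $\mathrm{Sing}(V)$; both are standard but deserve a line of justification via local dimension versus tangent space dimension, and via the dimension inequality $\dim W\le d-2<d=\dim\mathcal{C}_i$, respectively. Everything else is a formal consequence of the Hartshorne connectedness theorem quoted above.
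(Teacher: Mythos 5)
Your proof is correct and follows exactly the route the paper intends when it says, just before the theorem, ``Applying the Hartshorne connectedness theorem with $W:=\mathrm{Sing}(V)$, one deduces the following result''; you merely spell out the standard details (distinct components of the same dimension meet only inside $\mathrm{Sing}(V)$ because a regular local ring is a domain, no component lies in $\mathrm{Sing}(V)$ by the dimension count, so $V\setminus\mathrm{Sing}(V)$ would decompose into $s\ge 2$ disjoint nonempty closed pieces, contradicting connectedness).
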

%
%
\subsection{The singular locus of the variety $V(R_1\klk R_m)$}
\label{subsec: geometry: singular locus symmetric complete inters}
With the notations and assumptions of the beginning of Section
\ref{section: geometry of V}, let $V:=V^{\bfs{\lambda}}\subset \A^n$
be the affine variety defined by the polynomials $R_1\klk
R_m\in\fq[\bfs X]$ of \eqref{eq: geometry: def R_j}. The main result
of this section asserts that $V$ is regular in codimension one. From
this result we will be able to conclude that $V$ is a normal
ideal--theoretic complete intersection.

In the sequel we shall frequently express the points of $\A^n$ in
the coordinate system $\bfs Y:=(Y_1\klk Y_n)$, where $Y_1\klk Y_n$
are the linear forms of (\ref{eq: fact patterns: def linear forms
Y}). Let $Z_1,\ldots, Z_n$ be new indeterminates over $\cfq$, set
$\bfs{Z} :=(Z_1,\ldots,Z_{n-r})$ and let $S_1\klk
S_m\in\fq[\bfs{Z}]$ be the linear polynomials for which
$R_j=S_j(\Pi_1\klk\Pi_{n-r})$ holds for $1\le j\le m$, where
$\Pi_1\klk\Pi_{n-r}$ are the first $n-r$ elementary symmetric
polynomials of $\fq[\bfs Y]$. Recall that, by hypothesis, the
Jacobian matrix $(\partial \bfs{S}/\partial \bfs{Z})$ of
$\bfs{S}:=(S_1\klk S_m)$ with respect to $\bfs{Z}$ has full rank
$m$.

We now consider $S_1\klk S_m$ as elements of $\fq[Z_1\klk Z_n]$.
Since the Jacobian matrix $(\partial \bfs{S}/\partial \bfs{Z})$ has
full rank $m$, the linear affine variety $W\subset\A^n$ that
$S_1\klk S_m$ define has dimension $n-m$. Consider the following
surjective mapping:
\begin{align*}
   \bfs{\Pi}^{\bfs{n}}: \A^n& \rightarrow  \A^n
   \\
   \bfs y & \mapsto  (\Pi_1(\bfs y),\ldots,\Pi_n(\bfs y)).
\end{align*}
It is easy to see that $\bfs\Pi^{\bfs{n}}$ is a dominant finite
morphism (see, e.g., \cite[\S 5.3, Example 1]{Shafarevich94}). In
particular, the preimage $(\bfs\Pi^{\bfs{n}})^{-1}(\mathcal{Z})$ of
an irreducible affine variety $\mathcal{Z}\subset\A^n$ of dimension
$m$ is equidimensional and of dimension $m$.

Observe that the affine linear variety $W_j:=V(S_1\klk
S_j)\subset\A^n$ is equidimensional of dimension $n-j$. This implies
that the affine variety $(\bfs\Pi^{\bfs{n}})^{-1}(W_j)=V(R_1\klk
R_j)\subset\A^n$ is equidimensional of dimension $n-j$. We conclude
that $R_1\klk R_m$ form a regular sequence of $\fq[\bfs Y]$ and
deduce the following result.
\begin{lemma}\label{lemma: geometry: V is complete inters}
Let $V\subset \A^n$ be the affine variety defined by $R_1\klk R_m$.
Then $V$ is a set--theoretic complete intersection of dimension
$n-m$.
\end{lemma}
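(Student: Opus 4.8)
The plan is to prove that $V=V(R_1\klk R_m)$ is a set--theoretic complete intersection of dimension $n-m$ by transporting the corresponding statement about the linear variety $W=V(S_1\klk S_m)$ through the finite morphism $\bfs\Pi^{\bfs n}$, exactly as the narrative preceding the lemma already sets up. First I would recall that, after the change of coordinates $\bfs Y$, we have $R_j=S_j(\Pi_1\klk\Pi_{n-r})$ with $S_1\klk S_m\in\fq[\bfs Z]$ linear polynomials whose linear parts are linearly independent, so the Jacobian $(\partial\bfs S/\partial\bfs Z)$ has full rank $m$. Viewing the $S_j$ as elements of $\fq[Z_1\klk Z_n]$ (not involving $Z_{n-r+1}\klk Z_n$), the affine linear variety $W_j:=V(S_1\klk S_j)\subset\A^n$ is then equidimensional of dimension $n-j$ for $1\le j\le m$: a linear system of $j$ equations with independent linear parts cuts down dimension by exactly $j$.

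Next I would invoke the properties of the dominant finite morphism $\bfs\Pi^{\bfs n}:\A^n\to\A^n$, $\bfs y\mapsto(\Pi_1(\bfs y)\klk\Pi_n(\bfs y))$. Since $\bfs\Pi^{\bfs n}$ is finite and surjective, the preimage of an irreducible closed subset of dimension $d$ is equidimensional of dimension $d$ (this is the fact about dominant finite morphisms quoted in the algebraic--geometry preliminaries, \cite[\S 4.2, Proposition]{Danilov94}). Decomposing $W_j$ into its irreducible components, each of dimension $n-j$, and taking preimages componentwise, we conclude that $(\bfs\Pi^{\bfs n})^{-1}(W_j)$ is equidimensional of dimension $n-j$. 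On the other hand, by the definition of the $R_j$ via \eqref{eq: geometry: def R_j} we have the set--theoretic identity $(\bfs\Pi^{\bfs n})^{-1}(W_j)=V(R_1\klk R_j)$, because $\bfs y\in(\bfs\Pi^{\bfs n})^{-1}(W_j)$ iff $S_i(\Pi_1(\bfs y)\klk\Pi_{n-r}(\bfs y))=0$ for $1\le i\le j$ iff $R_i(\bfs y)=0$ for $1\le i\le j$ (here one uses that $S_i$ only involves the first $n-r$ coordinates). Hence $V(R_1\klk R_j)\subset\A^n$ is equidimensional of dimension $n-j$ for each $1\le j\le m$.

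From the chain of equidimensionality statements I would then deduce that $R_1\klk R_m$ form a regular sequence in $\cfq[\bfs X]$ (equivalently, in $\cfq[\bfs Y]$, since $\bfs Y$ is a linear change of coordinates): $R_1$ is nonzero, and for $2\le j\le m$ the fact that $V(R_1\klk R_{j-1})$ has pure dimension $n-(j-1)$ while $V(R_1\klk R_j)$ has pure dimension $n-j$ forces $R_j$ to avoid every associated prime of $(R_1\klk R_{j-1})$ — no component of the smaller variety is contained in $V(R_j)$, so $R_j$ is not a zero divisor modulo $(R_1\klk R_{j-1})$ — using that the ambient ring is Cohen--Macaulay, so all associated primes of an ideal generated by a regular sequence are minimal of the expected dimension. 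Consequently $V=V(R_1\klk R_m)$ is a set--theoretic complete intersection of dimension $n-m$, which is the claim.

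I expect the only real subtlety to be the bookkeeping around the ambient space: one must be careful that the $S_j$, originally polynomials in $Z_1\klk Z_{n-r}$, are reinterpreted as polynomials in $Z_1\klk Z_n$ so that $W_j$ lives in $\A^n$ and the preimage under the $n$--variable morphism $\bfs\Pi^{\bfs n}$ is exactly $V(R_1\klk R_j)$; and that $\bfs\Pi^{\bfs n}$ is indeed a finite morphism, which is standard (the coordinate ring $\cfq[X_1\klk X_n]$ is a free module of rank $n!$ over the subring generated by the elementary symmetric polynomials). Everything else is a direct consequence of the properties of finite morphisms and the regular--sequence criterion, so there is no genuine obstacle once the setup is correctly aligned.
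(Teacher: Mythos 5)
Your proposal is correct and follows essentially the same route as the paper: reinterpret $S_1\klk S_m$ as elements of $\fq[Z_1\klk Z_n]$, note that $W_j=V(S_1\klk S_j)$ is equidimensional of dimension $n-j$, pull this back through the dominant finite morphism $\bfs\Pi^{\bfs n}$ to get that $V(R_1\klk R_j)$ is equidimensional of dimension $n-j$, and conclude that $R_1\klk R_m$ form a regular sequence. The extra detail you supply on the regular--sequence criterion via Cohen--Macaulayness is sound but not spelled out in the paper, which moves directly from the equidimensionality chain to the conclusion.
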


%
%

Next we analyze the dimension of the singular locus of $V$. Assume
without loss of generality that $(\partial \bfs{S}/\partial
\bfs{Z})$ is lower triangular in row--echelon form. Let $1\le
i_1<\cdots< i_m\le n-r$ be the indices corresponding to the pivots.
Let $\mathcal{I}:=\{i_1\klk i_m\}$ and $\mathcal{J}:=\{j_1\klk
j_{n-r-m}\}:=\{1\klk n-r\}\setminus \mathcal{I}$. Then the Jacobian
matrix
\begin{equation}\label{eq: geometry: extended Jacobian S}
\mathcal{M}:=\big(\partial (S_1\klk S_m,Z_{j_1}\klk Z_{j_{n-r-m}})/
\partial \bfs{Z}\big)\in\cfq^{(n-r)\times(n-r)}
\end{equation}
is invertible. Let $B_0\klk B_{n-m-1}$ be new indeterminates over
$\cfq$ and define $S_{m+k}:= Z_{j_k}+B_{n-m-k}$ $(1\le k\le n-r-m)$
and $S_k:=Z_k+B_{n-k}$ $(n-r+1\le k\le n)$. Set
$\bfs{B}:=(B_{n-m-1}\klk B_0)$, $\bfs{S}^e:=(S_1\klk S_n)$ and
$\bfs{Z}^e:=(Z_1\klk Z_n)$. Observe that the Jacobian matrix
$$\big(\partial \bfs{S}^e/\partial \bfs Z^e\big)=
\big(\partial (S_1\klk S_m,Z_{j_1}\klk Z_{j_{n-r-m}},Z_{n-r+1}\klk
Z_n)/
\partial \bfs Z^e\big)$$
is also invertible. Consider the following surjective morphism of
affine varieties:
 \begin{align*}
 \bfs  \Pi : \A^n & \rightarrow  \A^{n-r}
   \\
   \bfs y & \mapsto  (\Pi_1(\bfs y),\ldots,\Pi_{n-r}(\bfs y)).
 \end{align*}
Finally, we introduce the affine variety $V^e\subset\A^{2n-m}$
defined in the following way:
$$V^e:=\{(\bfs y,\bfs{b})\in\A^n\times\A^{n-m}:
S_j(\bfs\Pi(\bfs y),\bfs{b})=0\ (1\le j\le n)\}.$$

In order to establish a relation between $V$ and $V^e$, let $(\bfs
y,\bfs{b})$ be an arbitrary point of $V^e$. Then $S_j(\bfs\Pi(\bfs
y),\bfs{b})=S_j(\bfs \Pi(\bfs y))=0$ holds for $1\le j\le m$, which
implies that $\bfs y\in V$. This shows the following regular mapping
of affine varieties is well--defined:
 \begin{align*}
   \Phi_1^e : V^e & \rightarrow  V
   \\
   (\bfs y,\bfs{b})& \mapsto  \bfs y.
 \end{align*}
Furthermore, by the definition of $V^e$ it is easy to see that
$\Phi_1^e$ is an isomorphism of affine varieties, whose inverse is
the following mapping:
 \begin{align*}
   \Psi^e : V & \rightarrow  V^e
   \\
   \bfs y & \mapsto  \big(\bfs y,-\Pi_{j_1}(\bfs y),\dots,
   -\Pi_{j_{n-r-m}}(\bfs y),-\Pi_{n-r+1}(\bfs y)\klk -\Pi_n(\bfs y)\big).
 \end{align*}
We conclude that $V^e$ is an affine equidimensional variety of
dimension $n-m$.

Our aim is to show that the singular locus $\Sigma$ of $V$ has
codimension at least 2 in $V$. For this purpose, we shall show that
the singular $\Sigma^e$ of $V^e$ has codimension at least 2 in
$V^e$.

Let $R_{m+k}:=S_{m+k}(\bfs\Pi,\bfs{B})$ for $1\le k\le n-m$. We
denote by $(\partial \bfs{R}/\partial \bfs Y)$ the Jacobian matrix
of $\bfs{R}:=(R_1\klk R_m)$ with respect to $\bfs Y$ and by
$(\partial \bfs{R}^e/\partial (\bfs Y,\bfs{B}))$ the Jacobian matrix
of $\bfs{R}^e:=(R_1\klk R_n)$ with respect to $\bfs Y$ and
$\bfs{B}$. The relation between the singular locus of $V$ and $V^e$
is expressed in the following remark.
\begin{remark}\label{remark: geometry: jacobians of full rank}
For $\bfs y\in V$, let $(\bfs y,\bfs{b}):=\Psi^e(\bfs y)$. Then
$(\partial \bfs{R}/\partial \bfs Y)(\bfs y)$ is of full rank $m$ if
and only if $\big(\partial \bfs{R}^e/(\partial \bfs
Y,\bfs{B})\big)(\bfs y,\bfs{b})$ is of full rank $n$.
\end{remark}
\begin{proof}
Let $\bfs y\in V$ be a point as in the statement of the remark.
By the definition of $\bfs{R}^e$ it follows that $\big(\partial
\bfs{R}^e/(\partial \bfs Y,\bfs{B})\big)(\bfs y,\bfs{b})$ has a
block structure as follows:
$$\frac{\partial
\bfs{R}^e}{\partial (\bfs Y,\bfs{B})}(\bfs y,\bfs{b})= \left(
  \begin{array}{cc}
   \frac{\partial \bfs{R}}{\partial \bfs Y}(\bfs y) & \bfs{0} \\[1ex]
    \frac{\partial (\bfs{R}^e\setminus\bfs R)}{\partial \bfs Y}(\bfs y,\bfs{b})
    & \frac{\partial (\bfs{R}^e\setminus \bfs R)}{\partial \bfs{B}}(\bfs y,\bfs{b}) \\
  \end{array}
\right)= \left(
  \begin{array}{cc}
   \frac{\partial \bfs{R}}{\partial \bfs Y}(\bfs y) & \bfs{0} \\[1ex]
    \frac{\partial (\bfs{R}^e\setminus\bfs R)}{\partial \bfs Y}(\bfs y,\bfs{b}) & \bfs{I} \\
  \end{array}
\right),
$$
where $\bfs{0}$ denotes a zero $m\times (n-m)$--matrix and $\bfs{I}$
denotes an $(n-m)\times(n-m)$--identity matrix. The conclusion of
the remark readily follows.
\end{proof}

In order to obtain an upper bound on the dimension of the singular
locus of $V^e$, we consider the following projection mapping:
 \begin{align*}
   \Phi_2^e : V^e & \rightarrow  \A^{n-m}
   \\
   (\bfs y,\bfs{b})& \mapsto  \bfs{b}.
 \end{align*}
We shall analyze the image under $\Phi_2^e$ of the singular locus of
$V^e$. The following result will allow us to draw conclusions
concerning the singular locus of $V^e$ from the analysis of its
image under $\Phi_2^e$.
\begin{lemma}\label{lemma: geometry: pi_2 is finite}
$\Phi_2^e$ is a dominant finite morphism.
\end{lemma}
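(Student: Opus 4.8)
The plan is to exhibit $\Phi_2^e$ as the composition of an isomorphism with a base change of the classical finite morphism $\bfs\Pi^{\bfs n}$, and then invoke the standard fact that finite morphisms are stable under composition and base change. First I would recall that $\Phi_1^e:V^e\to V$ is an isomorphism, so it suffices to understand the map $V\to\A^{n-m}$ given by $\bfs y\mapsto\bfs b$ under the identification $\Psi^e$, i.e. the map
$$\bfs y\mapsto\big(-\Pi_{j_1}(\bfs y),\dots,-\Pi_{j_{n-r-m}}(\bfs y),-\Pi_{n-r+1}(\bfs y)\klk -\Pi_n(\bfs y)\big).$$
The key observation is that the $n-m$ elementary symmetric polynomials appearing here, together with the $m$ pivot polynomials $\Pi_{i_1}\klk\Pi_{i_m}$, constitute \emph{all} of $\Pi_1\klk\Pi_n$ (reindexed), so the full tuple $(\Pi_1\klk\Pi_n)$ determines and is determined by $\bfs b$ together with the values of $\Pi_{i_1}\klk\Pi_{i_m}$ on the fibre. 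Since $R_1\klk R_m$ cut out $V$ inside $\A^n$, and each $R_j=S_j(\Pi_1\klk\Pi_{n-r})$ is linear in the $\Pi$'s with the pivot structure, on $V$ the pivot coordinates $\Pi_{i_1}(\bfs y)\klk\Pi_{i_m}(\bfs y)$ are \emph{linear expressions} in the non-pivot ones, hence already polynomials in $\bfs b$. Thus along $V$ the entire tuple $\bfs\Pi^{\bfs n}(\bfs y)=(\Pi_1(\bfs y)\klk\Pi_n(\bfs y))$ is a polynomial function of $\bfs b$, say $\bfs\Pi^{\bfs n}|_V=\iota\circ\Phi_2^e\circ\Psi^e$ for a suitable polynomial (in fact affine-linear in the $\Pi$-coordinates) map $\iota:\A^{n-m}\to\A^n$ that is a closed immersion.

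Concretely, I would argue as follows. The coordinate change $\bfs S^e:\A^n\to\A^n$, $\bfs Z^e\mapsto\bfs S^e(\bfs Z^e)$, is a linear isomorphism (its Jacobian is invertible), and it fits into a commutative square
$$\begin{array}{ccc}
\A^n & \xrightarrow{\ \bfs\Pi^{\bfs n}\ } & \A^n\\[0.5ex]
\big\downarrow{\scriptstyle\Psi^e\circ\Phi_1^e} & & \big\downarrow{\scriptstyle\bfs S^e}\\[0.5ex]
\end{array}$$
in the sense that, for $\bfs y\in V$, applying $\bfs S^e$ to $\bfs\Pi^{\bfs n}(\bfs y)$ yields the point whose first $m$ coordinates are $S_j(\bfs\Pi(\bfs y))=0$ and whose remaining coordinates are exactly the entries of $\Phi_2^e(\Psi^e(\bfs y))=\bfs b$. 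In other words, $\bfs S^e\circ(\bfs\Pi^{\bfs n}|_V)$ factors through the coordinate subspace $\{0\}^m\times\A^{n-m}\cong\A^{n-m}$, and the induced map $V\to\A^{n-m}$ is $\Phi_2^e\circ\Psi^e$. Since $\bfs\Pi^{\bfs n}:\A^n\to\A^n$ is a dominant finite morphism (cited above, via \cite[\S 5.3, Example 1]{Shafarevich94}), its restriction to the closed set $V=(\bfs\Pi^{\bfs n})^{-1}(W)$ over the closed set $W:=V(S_1\klk S_m)\subset\A^n$ is again finite and dominant onto $W$; composing with the linear isomorphism $\bfs S^e$ and the isomorphism $W\cong\A^{n-m}$ given by dropping the $m$ identically-zero pivot coordinates, we get that $\Phi_2^e\circ\Psi^e:V\to\A^{n-m}$ is a dominant finite morphism. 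As $\Psi^e=(\Phi_1^e)^{-1}$ is an isomorphism, $\Phi_2^e$ itself is a dominant finite morphism.

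The only point requiring a little care — and the step I expect to be the main obstacle — is \emph{dominance}: finiteness of the restriction of $\bfs\Pi^{\bfs n}$ to $V$ over its image $\overline{\bfs\Pi^{\bfs n}(V)}$ is automatic (restriction of an integral extension to a closed subscheme is integral), but I must check that the image is all of $W$, equivalently that $\Phi_2^e$ is surjective onto $\A^{n-m}$. This follows because $\bfs\Pi^{\bfs n}$ is surjective (every monic polynomial of degree $n$ factors over $\cfq$, so every tuple of ``symmetric function values'' is attained) and $W=V(S_1\klk S_m)$ is precisely the preimage structure matching the defining equations of $V$; hence $\bfs\Pi^{\bfs n}(V)=\bfs\Pi^{\bfs n}\big((\bfs\Pi^{\bfs n})^{-1}(W)\big)=W$ by surjectivity, and then projecting $W\cong\{0\}^m\times\A^{n-m}$ onto $\A^{n-m}$ is onto. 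Alternatively, and perhaps more cleanly, one notes $\dim V^e=n-m=\dim\A^{n-m}$ (established just above via $\Psi^e$), so a finite — hence closed — morphism $\Phi_2^e$ with source of the same dimension as the target has image a closed subvariety of full dimension $n-m$ in the irreducible variety $\A^{n-m}$, forcing the image to be all of $\A^{n-m}$; this simultaneously yields dominance and finishes the proof.
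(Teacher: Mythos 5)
Your proof is correct, and it takes a genuinely different route from the paper's. The paper works at the level of explicit integral relations: using the matrix $\mathcal M$ to express $\Pi_k(\bfs y)$ on $V^e$ in terms of $(\bfs\alpha,\bfs B_J)$, it substitutes into the identity $\prod_j(T-y_j)=0$ evaluated at $T=y_j$ to produce explicit monic degree--$n$ polynomials $P_j(Y_j,\bfs B)$ vanishing identically on $V^e$; this directly exhibits $\cfq[\bfs B]\hookrightarrow\cfq[V^e]$ as integral (hence $\Phi_2^e$ finite), and dominance follows from the theorem on the dimension of fibers, since the same $P_j$ force the nonempty fibers of $\Phi_2^e$ to be finite. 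You instead stay at the categorical level: through the isomorphism $\Psi^e$ and the linear coordinate change you recognize $\Phi_2^e$ as the base change $V=(\bfs\Pi^{\bfs n})^{-1}(W)\to W$ of the finite morphism $\bfs\Pi^{\bfs n}$ along the closed immersion $W=V(S_1\klk S_m)\hookrightarrow\A^n$, composed with the projection isomorphism $W\cong\A^{n-m}$ coming from the row--echelon pivot structure, and finiteness and dominance then follow by general principles (dominance either from surjectivity of $\bfs\Pi^{\bfs n}$, or more cleanly from your dimension count using that finite morphisms are closed). The two proofs share the same kernel --- the paper's $P_j$ are precisely what unwinding your base--change argument yields, since the integral dependence of the $Y_j$ over the symmetric functions is what makes $\bfs\Pi^{\bfs n}$ finite --- but your version is more conceptual, avoids tracking explicit coefficients, and your alternative dominance argument is arguably tighter than the paper's fiber--dimension appeal. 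Two cosmetic blemishes: the ``commutative square'' display in your write--up is truncated (its bottom row is missing), and the map $\bfs S^e$ as written is really an affine--linear family over $\A^{n-m}$ rather than a single automorphism of $\A^n$ (fixing $\bfs B=\bfs 0$ gives the literal coordinate change you want). Neither affects the validity of the argument.
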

\begin{proof}
Let $\alpha_i:=S_i(\bfs{0})$ for $1\le i\le m$ and let
$\bfs{\alpha}:=(\alpha_1\klk \alpha_m)$. Set
$\bfs{B}_J:=(B_{n-m-1}\klk B_r)$. By the definition of the matrix
$\mathcal{M}$ of (\ref{eq: geometry: extended Jacobian S}) it
follows that
$$(R_1\klk R_{n-r})=\mathcal{M}\cdot \bfs\Pi+
(\bfs\alpha,\bfs{B}_J)^t,$$
where $\bfs\Pi:=(\Pi_1\klk \Pi_{n-r})$. As a consequence, we have
$\bfs\Pi(\bfs y)+\mathcal{M}^{-1} (\bfs\alpha,\bfs{b}_J)^t=\bfs{0}$
for any $(\bfs y,\bfs{b})\in V^e$. Denote by $\bfs{m}_j$ the $j$th
row of $\mathcal{M}^{-1}$ for $1\le j\le n-r$. Then
$$\Pi_j(\bfs y)+\bfs{m}_j\cdot (\bfs\alpha,\bfs{b}_J)^t=0\quad (1\le j\le n-r)$$
for any $(\bfs y,\bfs{b})\in V^e$. Furthermore, the identity
$$(y_j)^n+\sum_{k=1}^n(-1)^k\,\Pi_k(\bfs y)\,(y_j)^{n-k}=0$$
holds for $1\le j\le n$. Combining the two previous identities and
the definition of $\bfs{S}^e$, we conclude that the polynomial
$$P_j:=Y_j^n-\sum_{k=1}^{n-r}(-1)^k\,\bfs{m}_k\cdot (\bfs\alpha,
\bfs{B}_J)^tY_j^{n-k} -\sum_{k=n-r+1}^n(-1)^kB_{n-k}Y_j^{n-k}$$
vanishes identically over $V^e$ for $1\le j\le n$.

Let $\bfs{b}\in\A^{n-m}$ be a point of the image of $\Phi_2^e$ and
let $\bfs y\in V$ be an arbitrary point with $(\bfs y,\bfs{b})\in
V^e$. As the identity $P_j(y_j,\bfs{b})=0$ holds for $1\le j\le n$,
the fiber of $\bfs{b}$ under $\Phi_2^e$ has dimension zero. Then the
theorem on the dimension of fibers (see, e.g., \cite[\S I.6.3,
Theorem 7]{Shafarevich94}) asserts that $\dim V^e -\dim
\Phi_2^e(V^e)\le \dim(\Phi_2^e)^{-1}(\bfs{b})= 0$, namely $\dim
\Phi_2^e(V^e)\ge n-m$. It follows that $\Phi_2^e$ is dominant.

Furthermore, since $P_j(Y_j,\bfs{B})=0$ holds in $\cfq[V^e]$ for
$1\le j\le n$, we see that the ring extension
$\cfq[\bfs{B}]\hookrightarrow \cfq[V^e]$ is integral. This implies
that $\Phi_2^e$ is a finite morphism and finishes the proof of the
lemma.
\end{proof}

Next we obtain a partial characterization of the singular locus of
$V^e$. Here we use the fact that $V$ is defined by symmetric
polynomials.
\begin{lemma}\label{lemma: geometry: coordinates sing point}
Let $(\bfs y,\bfs{b})\in V^e$ be a point for which $(\partial
\bfs{R}^e/\partial (\bfs Y,\bfs{B}))(\bfs y,\bfs{b})$ has not full
rank. Then there exist $1\le i<j<k<l\le n$ such that $y_i=y_j$ and
$y_k=y_l$.
\end{lemma}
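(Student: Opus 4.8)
The plan is to analyze the rank-deficiency condition of $(\partial \bfs{R}^e/\partial (\bfs Y,\bfs{B}))(\bfs y,\bfs{b})$ by using the chain rule to factor this Jacobian through the Jacobian of the elementary symmetric polynomials. First I would recall that $R_i = S_i(\Pi_1,\ldots,\Pi_{n-r})$ for $1\le i\le m$ and $R_{m+k}=S_{m+k}(\bfs\Pi,\bfs B)$ for $1\le k\le n-m$, where the $S_{m+k}$ for $1\le k\le n-r-m$ involve only $Z_{j_k}$ and one $B$-variable, while $S_k$ for $n-r+1\le k\le n$ involve only $Z_k$ and one $B$-variable. Writing $\bfs\Pi^{\bfs n}=(\Pi_1,\ldots,\Pi_n)\colon\A^n\to\A^n$, the chain rule gives
\[
\frac{\partial \bfs R^e}{\partial(\bfs Y,\bfs B)}(\bfs y,\bfs b)=
\frac{\partial \bfs S^e}{\partial(\bfs Z^e,\bfs B)}(\bfs\Pi^{\bfs n}(\bfs y),\bfs b)\cdot
\begin{pmatrix}\dfrac{\partial \bfs\Pi^{\bfs n}}{\partial \bfs Y}(\bfs y) & \bfs 0\\[1ex] \bfs 0 & \bfs I\end{pmatrix}.
\]
Since $(\partial \bfs S^e/\partial \bfs Z^e)$ is invertible (it was arranged to be, being block-triangular with an identity block and the invertible matrix $\mathcal M$), the left factor has full rank $n$ on the relevant variables. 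Hence $(\partial \bfs R^e/\partial(\bfs Y,\bfs B))(\bfs y,\bfs b)$ fails to have full rank $n$ if and only if the Jacobian matrix $(\partial \bfs\Pi^{\bfs n}/\partial \bfs Y)(\bfs y)$ of all $n$ elementary symmetric polynomials fails to have full rank $n$.

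Next I would invoke the classical fact that the Jacobian determinant of $\Pi_1,\ldots,\Pi_n$ with respect to $Y_1,\ldots,Y_n$ equals, up to sign, the Vandermonde-type product $\prod_{i<j}(Y_i-Y_j)$. Concretely, $\det(\partial \bfs\Pi^{\bfs n}/\partial \bfs Y)=\pm\prod_{1\le i<j\le n}(y_i-y_j)$ (this follows from the fact that $\partial\Pi_k/\partial Y_i$ is the $(k-1)$-th elementary symmetric polynomial in the variables other than $Y_i$, and a standard determinant computation). Therefore this Jacobian is singular at $\bfs y$ precisely when at least two coordinates of $\bfs y$ coincide, say $y_i=y_j$ for some $i<j$.

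This already forces one coincidence; the remaining task is to upgrade a single coincidence to two disjoint coincidences $y_i=y_j$ and $y_k=y_l$ with $i<j<k<l$. Here I would argue more carefully about the rank. A single equality $y_i=y_j$ makes two rows of $(\partial \bfs\Pi^{\bfs n}/\partial \bfs Y)(\bfs y)$ equal — indeed $\partial\Pi_k/\partial Y_i$ and $\partial\Pi_k/\partial Y_j$ agree when $y_i=y_j$ — so the rank drops by exactly one if no other coincidence occurs, leaving the matrix of rank $n-1$. But then, composing with the invertible left factor, $(\partial \bfs R^e/\partial(\bfs Y,\bfs B))(\bfs y,\bfs b)$ would still have rank $n-1$... which contradicts nothing by itself. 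The key observation I would exploit is that $\bfs R^e$ has only $n$ rows but the Jacobian is taken with respect to the $2n-m$ variables $(\bfs Y,\bfs B)$, and the $\bfs B$-columns contribute an identity block $\bfs I$ of size $n-m$ to the last $n-r$ rows. So rank-deficiency of the full $n\times(2n-m)$ matrix is equivalent to rank-deficiency of the $n\times n$ matrix $(\partial \bfs\Pi^{\bfs n}/\partial \bfs Y)(\bfs y)$ — and that $n\times n$ matrix drops rank by at least one per "block" of coinciding coordinates. Hence if the matrix is to fail full rank, a single transposition coincidence suffices for the abstract statement, but the sharper claim $y_i=y_j$ \emph{and} $y_k=y_l$ must come from a finer analysis: the rank of $(\partial \bfs\Pi^{\bfs n}/\partial \bfs Y)(\bfs y)$ equals $n$ minus (the number of coordinates minus the number of distinct values among them), i.e. $n$ minus the total "defect" $\sum_v(\mu_v-1)$ where $\mu_v$ is the multiplicity of value $v$. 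I expect the main obstacle to be establishing this rank formula precisely — that the corank of the symmetric-polynomial Jacobian at $\bfs y$ equals $\sum_v(\mu_v-1)$ — and then checking that corank $\ge 1$ alone is \emph{not} what the lemma claims; rather the lemma as stated must be using that the relevant Jacobian is $(\partial \bfs R^e/\partial(\bfs Y,\bfs B))$ with $\bfs R^e$ of length $n$, so failure of full rank is corank $\ge 1$, giving $\sum_v(\mu_v-1)\ge 1$. To get two \emph{disjoint} pairs one needs $\sum_v(\mu_v-1)\ge 2$, which would follow if full rank fails \emph{strictly} — but a single $y_i=y_j$ already breaks it. I would therefore re-examine whether the intended hypothesis is that $(\partial\bfs R/\partial\bfs Y)(\bfs y)$, an $m\times n$ matrix, has rank $<m$, so that via Remark~\ref{remark: geometry: jacobians of full rank} the $n\times n$ Vandermonde-type matrix together with the block structure forces corank $\ge 2$; tracing through the row-echelon normalization of $\mathcal M$ and the fact that $m\ge 1$ with the pivot structure should show that the singular locus condition translates to $\sum_v(\mu_v-1)\ge 2$, hence either one coordinate appears three times (giving $y_i=y_j=y_k$, so in particular $y_i=y_j$ and, relabelling, $y_k=y_l$ is not immediate unless a fourth equals — here one uses $n\ge r+m\ge$ enough room, plus $q>n$) or two distinct values are each repeated, directly yielding the desired $i<j<k<l$. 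I would spell out these two cases and in the first subcase use a counting/pigeonhole argument with the constraint coming from the codimension of $\mathcal A$ to produce a fourth coincident coordinate or else rule that subcase out, completing the proof.
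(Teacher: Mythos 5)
Your chain-rule factorization through $\bfs\Pi^{\bfs n}=(\Pi_1,\ldots,\Pi_n)$ is formally correct, but the equivalence you draw from it — that $(\partial \bfs R^e/\partial(\bfs Y,\bfs B))(\bfs y,\bfs b)$ drops rank if and only if the $n\times n$ Jacobian $(\partial\bfs\Pi^{\bfs n}/\partial\bfs Y)(\bfs y)$ is singular — is false, and you notice the tension yourself midway through: a single coincidence $y_i=y_j$ already kills the Vandermonde determinant, yet cannot possibly entail the lemma's conclusion. The left factor $\partial\bfs S^e/\partial(\bfs Z^e,\bfs B)$ is $n\times(2n-m)$, so full rank of the product does \emph{not} require the $(2n-m)\times(2n-m)$ block-diagonal right factor to be invertible; in fact a single collision leaves the $(n-r)\times n$ Jacobian $(\partial(\Pi_1,\ldots,\Pi_{n-r})/\partial\bfs Y)(\bfs y)$ of full rank $n-r$ (e.g.\ for $n-r=1$ the single row $(\partial\Pi_1/\partial\bfs Y)=(1,\ldots,1)$ never degenerates). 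The correct reduction, which you only gesture at in your last sentences, is: by Remark~\ref{remark: geometry: jacobians of full rank} the hypothesis is equivalent to rank-deficiency of the $m\times n$ matrix $(\partial\bfs R/\partial\bfs Y)(\bfs y)$, and the chain rule $(\partial\bfs R/\partial\bfs Y)=\big((\partial\bfs S/\partial\bfs Z)\circ\bfs\Pi\big)\cdot(\partial\bfs\Pi/\partial\bfs Y)$ with $\bfs\Pi=(\Pi_1,\ldots,\Pi_{n-r})$, together with full row rank of $(\partial\bfs S/\partial\bfs Z)$, produces a nonzero $\bfs w\in\A^{n-r}$ in the left kernel of the Jacobian of the \emph{truncated} family of symmetric polynomials.

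The ingredient you never supply is what to do with this rank-deficiency. The paper invokes a structural result (argued as in \cite{CeMaPePr14} and \cite{CaMaPr12}) asserting that rank-deficiency of $(\partial(\Pi_1,\ldots,\Pi_{n-r})/\partial\bfs Y)(\bfs y)$ forces $\bfs y$ to have at most $n-r-1\le n-4$ pairwise-distinct coordinates (here $r\ge 3$ is used), i.e.\ collision defect at least four. This is much stronger than the $\sum_v(\mu_v-1)\ge 2$ you try to establish, and it is exactly what makes the troublesome ``single triple'' subcase (defect $2$ concentrated as $y_i=y_j=y_k$) — which you correctly observe would not give two disjoint pairs — simply impossible. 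With at most $n-4$ distinct values the conclusion follows by pigeonhole: there must be $1\le i<j\le n-2$ with $y_i=y_j$, and after relabeling the remaining $n-2$ coordinates still cannot be pairwise distinct, yielding $3\le k<l\le n$ with $y_k=y_l$. Since your proposal both asserts an incorrect equivalence and never reaches the crucial rank bound for the truncated Jacobian of $(\Pi_1,\ldots,\Pi_{n-r})$, it contains a genuine gap.
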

\begin{proof}
Let $(\bfs y,\bfs{b})\in V^e$ be a point as in the statement of the
lemma. According to Remark \ref{remark: geometry: jacobians of full
rank}, the Jacobian matrix  $(\partial \bfs{R}/\partial \bfs Y)(\bfs
y)$ is not of full rank. Since $\bfs{R}=\bfs{S}\circ\bfs\Pi$, by the
chain rule we obtain
$$
\left(\frac{\partial \bfs{R}}{\partial \bfs
Y}\right)=\left(\frac{\partial \bfs{S}}{\partial
\bfs{Z}}\circ\bfs\Pi\right)\cdot \left(\frac{\partial\bfs
\Pi}{\partial \bfs Y}\right).
$$

Let $\bfs{v}\in\A^m$ a nonzero vector in the left kernel of
$(\partial \bfs{R}/\partial \bfs Y)(\bfs y)$. Then
$$\mathbf{0}=
\bfs{v}\cdot \left(\frac{\partial \bfs{R}}{\partial \bfs
Y}\right)(\bfs y)=\bfs{v}\cdot\left(\frac{\partial \bfs{S}}{\partial
\bfs{Z}}\right)\big(\bfs\Pi(\bfs y)\big)\cdot \left(\frac{\partial
\bfs\Pi}{\partial \bfs Y}\right)(\bfs y).$$
Since the Jacobian matrix $(\partial \bfs{S}/\partial
\bfs{Z})\big(\bfs\Pi(\bfs y)\big)$ has full rank, we deduce that the
vector $\bfs{w}:=\bfs{v}\cdot\left({\partial \bfs{S}}/{\partial
\bfs{Z}}\right)\big(\bfs\Pi(\bfs y)\big)\in\A^{n-r}$ is nonzero and
$$\bfs{w}\cdot \left(\frac{\partial\bfs \Pi}{\partial \bfs Y}\right)(\bfs y)= \mathbf{0}.$$
Hence, all the maximal minors of $(\partial \bfs\Pi/\partial \bfs
Y)(\bfs y)$ must vanish.

Arguing as in the proof of \cite[Theorem 3.2]{CeMaPePr14} (see also
\cite[Theorem 3.1]{CaMaPr12}), we conclude that $\bfs y$ has at most
$n-4$ pairwise--distinct coordinates. In particular, there exist
$1\le i<j\le n-2$ with $y_i= y_j$. Assume without loss of generality
that $i=1$ and $j=2$. Then there exist $3\le k<l\le n$ with
$y_k=y_l$. This finishes the proof of the lemma. \end{proof}

Now we obtain an upper bound on the dimension of the singular locus
of $V^e$.
\begin{proposition}
\label{prop: geometry: set of points x,b with sing Jacobian} Let
$p>2$. The set of points $(\bfs y,\bfs{b})\in V^e$ for which the
Jacobian matrix $(\partial \bfs{R}^e/\partial (\bfs Y,\bfs{B}))(\bfs
y,\bfs{b})$ has not full rank, has codimension at least $2$ in
$V^e$. In particular, the singular locus of $V^e$ has codimension at
least $2$ in $V^e$.
\end{proposition}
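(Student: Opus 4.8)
The plan is to reduce the statement to a dimension count on the image of the bad set under the finite morphism $\Phi_2^e$, using the structural information already assembled. Concretely, let $\Sigma^e\subset V^e$ denote the set of points $(\bfs y,\bfs b)$ at which $(\partial \bfs R^e/\partial(\bfs Y,\bfs B))$ fails to have full rank $n$. Since $\Phi_2^e\colon V^e\to\A^{n-m}$ is a dominant finite morphism (Lemma \ref{lemma: geometry: pi_2 is finite}), it is closed and preserves dimension on closed subsets, so it suffices to show $\dim \Phi_2^e(\Sigma^e)\le n-m-2$, i.e. that the image of the bad locus in the $\bfs b$--space is cut down by at least two. Because $V^e$ is equidimensional of dimension $n-m$, this immediately yields $\dim\Sigma^e\le n-m-2$, hence codimension at least $2$; and then Remark \ref{remark: geometry: jacobians of full rank} identifies $\Sigma^e$ (via the isomorphism $\Psi^e$) with the singular locus of $V^e$, so the ``in particular'' clause follows as well.

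The key input is Lemma \ref{lemma: geometry: coordinates sing point}: every point of $\Sigma^e$ has a coordinate vector $\bfs y$ with at most $n-4$ distinct coordinates, and in fact there are indices $1\le i<j<k<l\le n$ with $y_i=y_j$ and $y_k=y_l$. So I would stratify $\Sigma^e$ according to the finitely many partition patterns of $\{1,\dots,n\}$ compatible with this constraint; it is enough to bound the dimension of the image under $\Phi_2^e$ of the part lying over a fixed such pattern. Fix then, say, the stratum where $y_1=y_2$ and $y_3=y_4$ (the general stratum is handled identically, only with more coincidences, which only makes the image smaller). On this stratum the polynomial identities $P_j(y_j,\bfs b)=0$ from the proof of Lemma \ref{lemma: geometry: pi_2 is finite} hold for $1\le j\le n$; these are the relations expressing integrality of $\cfq[\bfs B]\hookrightarrow\cfq[V^e]$. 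Imposing $y_1=y_2$ forces $P_1(y_1,\bfs b)=P_2(y_1,\bfs b)=0$, i.e. $y_1$ is a common root of $P_1(\,\cdot\,,\bfs b)$ and $P_2(\,\cdot\,,\bfs b)$; since $P_1$ and $P_2$ are the \emph{same} univariate polynomial $Q(\,\cdot\,,\bfs b):=Y^n-\sum_{k=1}^{n-r}(-1)^k\bfs m_k\cdot(\bfs\alpha,\bfs B_J)^tY^{n-k}-\sum_{k=n-r+1}^n(-1)^kB_{n-k}Y^{n-k}$ in the variable $Y_1$ resp. $Y_2$, the condition $y_1=y_2$ says precisely that $Q(\,\cdot\,,\bfs b)$ has a \emph{repeated root}, hence $\mathrm{disc}_Y(Q)(\bfs b)=0$. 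This is one nontrivial polynomial condition on $\bfs b$ (nontrivial because, as one checks, the discriminant of $Q$ as a polynomial in $\bfs b$ does not vanish identically — the coefficients of $Q$ involve $\bfs B$ through the invertible matrix $\mathcal M^{-1}$ and through the $B_{n-k}$ directly, so $Q$ is a ``generic enough'' polynomial, and in particular for $p>2$ its discriminant is not the zero polynomial). Likewise $y_3=y_4$ forces a \emph{second} repeated-root condition. Here I would be a little careful: imposing $y_1=y_2$ already constrains $\bfs b$ to the discriminant hypersurface, and I need the condition ``$y_3=y_4$'' to cut down further; the cleanest way is to observe that on the open dense part of the discriminant hypersurface $Q(\,\cdot\,,\bfs b)$ has exactly one double root and $n-2$ simple roots, so demanding a \emph{second} distinct double root is a proper further closed condition — alternatively, demand that $\gcd(Q,Q')$ have degree $\ge 2$ in $Y$, which is the vanishing of a further subresultant. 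Either way one gets $\dim\Phi_2^e(\Sigma^e)\le (n-m)-2$.

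Assembling: $\Phi_2^e$ finite gives $\dim\Sigma^e=\dim\Phi_2^e(\Sigma^e)\le n-m-2$ on each stratum, hence overall, so $\Sigma^e$ has codimension $\ge 2$ in the equidimensional variety $V^e$ of dimension $n-m$; transporting back through the isomorphism $\Psi^e$ and Remark \ref{remark: geometry: jacobians of full rank} shows $\mathrm{Sing}(V^e)=\Psi^e(\Sigma^e)$ has codimension $\ge 2$ in $V^e$. The main obstacle I anticipate is the second-coincidence argument: showing that the two conditions ``$Q(\,\cdot\,,\bfs b)$ has a double root'' and ``$Q(\,\cdot\,,\bfs b)$ has a second, independent double root'' genuinely impose two independent constraints on $\bfs b$ rather than collapsing — this is where the hypothesis $p>2$ enters (in characteristic $2$ the discriminant/derivative machinery degenerates), and it is the step that needs the linear-algebra structure of $\mathcal M^{-1}$ and the independence of the forms $S_1,\dots,S_m$ to rule out a degenerate $Q$. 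Once that genericity of $Q$ is pinned down, the rest is the routine fiber-dimension and finite-morphism bookkeeping sketched above.
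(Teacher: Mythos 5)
Your overall strategy coincides with the paper's: use Lemma~\ref{lemma: geometry: coordinates sing point} to pin down the coordinate coincidences of a bad point, push the bad locus forward to $\A^{n-m}$ through the dominant finite morphism $\Phi_2^e$ (which preserves dimension of closed sets), and show that the image has codimension at least $2$. The entire difference lies in how one bounds that image, and this is precisely where your argument has a genuine gap. The paper observes that the univariate polynomial $P_{\bfs b}(T)=\prod_{j}(T-y_j)$, whose coefficients depend affinely on $\bfs b$, must have either two distinct multiple roots or a root of multiplicity at least $3$, and then \emph{cites} \cite[Lemmas 4.5 and 4.7]{MaPePr13}, which assert that each of these two loci in $\A^{n-m}$ is contained in a subvariety of codimension $2$. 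You instead try to re-derive this from scratch with a discriminant/subresultant argument, and you yourself flag it as ``the main obstacle.'' That is exactly right: showing that the discriminant condition $\mathrm{disc}_Y Q(\bfs b)=0$ and the further subresultant condition $\deg\gcd_Y(Q,Q')\ge 2$ impose two \emph{independent} constraints on $\bfs b$ is not ``routine bookkeeping'' but the technical core, and it is precisely what the cited lemmas supply. Your sketch does not discharge it: you assert (rather than prove) that $\mathrm{disc}_Y Q$ is a nonzero polynomial in $\bfs b$, and the ``on the open dense part of the discriminant hypersurface there is exactly one double root'' step silently uses a nontrivial irreducibility/genericity statement about that hypersurface.

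Two smaller points. First, your stratification description initially treats only the ``two distinct double roots'' pattern and waves away the strata where the four coincident indices collapse to a single root of multiplicity $\ge 3$ as ``more coincidences, which only makes the image smaller'' --- but a triple root is not a specialization of the two-distinct-double-roots locus, it is a different stratum; your later reformulation via $\deg\gcd_Y(Q,Q')\ge 2$ does correctly capture both cases, so this can be repaired, but as written the first version of the argument is off. Second, your attribution of the hypothesis $p>2$ to ``discriminant/derivative machinery degenerating in characteristic $2$'' is speculative; in the paper this hypothesis is simply inherited from the lemmas of \cite{MaPePr13} that are being invoked, and your discriminant heuristic does not in itself pinpoint why characteristic $2$ would fail. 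In short: same skeleton as the paper, but the step you defer is exactly the step the paper outsources to \cite{MaPePr13}, so the proof is not complete as it stands.
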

\begin{proof}
We use the notations of the proof of Lemma \ref{lemma: geometry:
pi_2 is finite}. In the proof of Lemma \ref{lemma: geometry: pi_2 is
finite} we show that the polynomial
$$P_j(Y_j,\bfs{B}):=
Y_j^n-\sum_{k=1}^{n-r}(-1)^k\,\bfs{m}_k\cdot
(\bfs\alpha,\bfs{B}_J)^tY_j^{n-k}
-\sum_{k=n-r+1}^n(-1)^kB_{n-k}Y_j^{n-k}$$
vanishes identically on $V^e$ for $1\le j\le n$. Let $(\bfs
y,\bfs{b})\in V^e$ be a point as in the statement of the proposition
and let $P_{\bfs{b}}\in\cfq[T]$ be the polynomial
\begin{align*}
P_{\bfs{b}}&:=  T^n-\sum_{k=1}^{n-r}(-1)^k\,\bfs{m}_k\cdot
(\bfs\alpha,\bfs{b}_J)^tT^{n-k} -\sum_{k=n-r+1}^n(-1)^kb_{n-k}T^{n-k}\\
&=T^n+\sum_{k=1}^n(-1)^k\,\Pi_k(\bfs y)\,T^{n-k}=
\prod_{j=1}^n(T-y_j).
\end{align*}
Since the roots of $P_{\bfs{b}}$ in $\cfq$ are the coordinates of
$\bfs y$, by Lemma \ref{lemma: geometry: coordinates sing point} we
have that $P_{\bfs{b}}$ has either two distinct multiple roots, or a
root of multiplicity at least three.

On one hand, \cite[Lemma 4.5]{MaPePr13} shows that the set
consisting of the elements $\bfs{b}\in\A^{n-m}$ such that
$P_{\bfs{b}}$ has two distinct multiple roots is contained in a
subvariety of codimension 2 of $\A^{n-m}$. On the other hand,
\cite[Lemma 4.7]{MaPePr13} proves that the subset of $\A^{n-m}$
formed by the elements $\bfs{b}$ for which $P_{\bfs{b}}$ has a root
of multiplicity at least three is contained in a subvariety of
codimension 2 of $\A^{n-m}$. As a consequence, the image under
$\Phi_2^e$ of the set of points $(\bfs y,\bfs{b})\in V^e$ as in the
statement of the lemma is contained in a subvariety of codimension 2
of $\A^{n-m}$.

Lemma \ref{lemma: geometry: pi_2 is finite} asserts that $\Phi_2^e$
is dominant finite morphism. Therefore, as the inverse image of a
codimension--2 subvariety of $\A^{n-m}$ is a codimension--2
subvariety of $V^e$, the first assertion of the proposition is
deduced.

Now we consider the second assertion of the proposition. Let $(\bfs
y,\bfs{b})$ be a singular point of $V^e$ and let $\mathcal{T}_{\bfs
y}V^e$ be the tangent space of $V^e$ at $\bfs y$. Since
$V^e=V(R_1\klk R_n)$, for any $\bfs{v}\in \mathcal{T}_{\bfs y}V^e$
we have $(\partial \bfs{R}^e/\partial \bfs Y)(\bfs
y,\bfs{b})\cdot\bfs{v}=\bfs{0}$. If the Jacobian matrix $(\partial
\bfs{R}^e/\partial \bfs Y)(\bfs y,\bfs{b})$ had full rank, then
$\mathcal{T}_{\bfs y}V^e$ would have dimension at most $n-m$,
contradicting thus our assumption on $(\bfs y,\bfs{b})$. Hence, the
second assertion readily follows.
\end{proof}

Finally, we are able to establish our main result concerning the
dimension of the singular locus of $V$.
\begin{theorem}\label{theorem: geometry: dimension singular locus}
Let $p>2$. The set of points $\bfs y\in V$ for which $(\partial
\bfs{R}/\partial \bfs Y)(\bfs y)$ has not full rank, has codimension
at least $2$ in $V$. In particular, the singular locus $\Sigma$ of
$V$ has codimension at least $2$ in $V$.
\end{theorem}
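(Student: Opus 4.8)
The plan is to transfer the information already obtained for $V^e$ in Proposition \ref{prop: geometry: set of points x,b with sing Jacobian} to $V$ by means of the isomorphism $\Psi^e\colon V\to V^e$ introduced above. The substantive work has already been carried out in that proposition (via Lemma \ref{lemma: geometry: coordinates sing point} and the results of \cite{MaPePr13}); what remains is a routine transport along $\Psi^e$, so I do not expect a genuine obstacle here. Write $\Sigma_{\bfs R}\subset V$ for the set of points $\bfs y\in V$ for which $(\partial\bfs R/\partial\bfs Y)(\bfs y)$ does not have full rank $m$, and $\Sigma_{\bfs R}^e\subset V^e$ for the set of points $(\bfs y,\bfs b)\in V^e$ for which $(\partial\bfs R^e/\partial(\bfs Y,\bfs B))(\bfs y,\bfs b)$ does not have full rank $n$.

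For the first assertion I would observe that $\Psi^e$ maps $\Sigma_{\bfs R}$ bijectively onto $\Sigma_{\bfs R}^e$. Indeed, $\Psi^e$ is a bijection $V\to V^e$ with inverse $\Phi_1^e$, and by Remark \ref{remark: geometry: jacobians of full rank}, for $\bfs y\in V$ the matrix $(\partial\bfs R/\partial\bfs Y)(\bfs y)$ has full rank $m$ if and only if $(\partial\bfs R^e/\partial(\bfs Y,\bfs B))(\Psi^e(\bfs y))$ has full rank $n$; equivalently, $\bfs y\in\Sigma_{\bfs R}$ if and only if $\Psi^e(\bfs y)\in\Sigma_{\bfs R}^e$. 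Since $\Psi^e$ is an isomorphism of affine varieties, it preserves the dimension of every subvariety, and both $V$ and $V^e$ are equidimensional of dimension $n-m$ (Lemma \ref{lemma: geometry: V is complete inters} and the discussion preceding it); hence $\Sigma_{\bfs R}$ and $\Sigma_{\bfs R}^e$ have the same codimension in $V$ and $V^e$ respectively. By Proposition \ref{prop: geometry: set of points x,b with sing Jacobian}, $\Sigma_{\bfs R}^e$ has codimension at least $2$ in $V^e$, and therefore $\Sigma_{\bfs R}$ has codimension at least $2$ in $V$.

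For the ``in particular'', since $\Psi^e$ is an isomorphism of affine varieties it maps $\mathrm{Sing}(V)$ onto $\mathrm{Sing}(V^e)$, and the second assertion of Proposition \ref{prop: geometry: set of points x,b with sing Jacobian} gives $\mathrm{codim}_{V^e}\mathrm{Sing}(V^e)\ge 2$; hence $\mathrm{codim}_V\mathrm{Sing}(V)\ge 2$. Alternatively one may argue directly: by Lemma \ref{lemma: geometry: V is complete inters}, $V=V(R_1\klk R_m)$ is a complete intersection of dimension $n-m$, so $\dim_{\bfs y}V=n-m$ for every $\bfs y\in V$; since $R_1\klk R_m$ lie in $I(V)$, the tangent space $\mathcal T_{\bfs y}V$ is contained in the kernel of $(\partial\bfs R/\partial\bfs Y)(\bfs y)$, and for $\bfs y\notin\Sigma_{\bfs R}$ this kernel has dimension exactly $n-m$, forcing $\dim\mathcal T_{\bfs y}V=n-m=\dim_{\bfs y}V$, i.e. $\bfs y$ is a regular point. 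Thus $\mathrm{Sing}(V)\subseteq\Sigma_{\bfs R}$, and the first assertion yields $\mathrm{codim}_V\mathrm{Sing}(V)\ge 2$.
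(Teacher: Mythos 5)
Your proof is correct and follows essentially the same route as the paper: both transport Proposition \ref{prop: geometry: set of points x,b with sing Jacobian} across the isomorphism between $V$ and $V^e$ (you via $\Psi^e$, the paper via $\Phi_1^e$, which are mutually inverse), using Remark \ref{remark: geometry: jacobians of full rank} to match the rank--deficiency loci. Your ``direct'' argument for the \emph{in particular} clause is in fact exactly the paper's argument, phrased contrapositively.
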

\begin{proof}
Recall that the projection mapping $\Phi_1^e : V^e \rightarrow  V$
defined by $\Phi_1^e(\bfs y,\bfs{b}):=\bfs y$ is an isomorphism of
affine varieties. Furthermore, Remark \ref{remark: geometry:
jacobians of full rank} asserts that the image under $\Phi_1^e$ of
the set of points $(\bfs y,\bfs{b})\in V^e$ for which $(\partial
\bfs{R}^e/\partial (\bfs Y,\bfs{B}))(\bfs y,\bfs{b})$ has not full
rank is the set of points $\bfs y\in V$ as in the statement of the
theorem. Proposition \ref{prop: geometry: set of points x,b with
sing Jacobian} shows that the former is contained in a
codimension--2 subvariety of $V^e$, which implies that the latter is
contained in a codimension--2 subvariety of $V$. This proves the
first assertion of the theorem.

Now let $\bfs y$ be an arbitrary point $\Sigma$. By Lemma
\ref{lemma: geometry: V is complete inters} we have $\dim
\mathcal{T}_{\bfs y}V>n-m$. This implies that
$\mathrm{rank}\left({\partial \bfs{R}}/{\partial \bfs Y}\right)(\bfs
y)<m$, for otherwise we would have $\dim \mathcal{T}_{\bfs y}V\le
n-m$, contradicting thus the fact that $\bfs y$ is a singular point
of $V$. From the first assertion, already proved, we easily deduce
the second assertion of the theorem.
\end{proof}
From Lemma \ref{lemma: geometry: V is complete inters} and Theorem
\ref{theorem: geometry: dimension singular locus} we obtain further
algebraic and geometric consequences concerning the polynomials
$R_1\klk R_m$ and the variety $V$. By Theorem \ref{theorem:
geometry: dimension singular locus} we have that the set of points
$\bfs y\in V$ for which the Jacobian matrix $(\partial
\bfs{R}/\partial \bfs Y)(\bfs y)$ has not full rank has codimension
at least $2$ in $V$. Since $R_1\klk R_m$ form a regular sequence of
$\fq[\bfs Y]$, from \cite[Theorem 18.15]{Eisenbud95} we conclude
that $R_1\klk R_m$ define a radical ideal of $\fq[\bfs Y]$.

On the other hand, recall that the matrix $(\partial
\bfs{S}/\partial \bfs{Z})$ was supposed to be lower triangular in
row--echelon form, the indices $i_1\klk i_m$ corresponding to the
positions of the pivots of $(\partial \bfs{S}/\partial \bfs{Z})$.
Then each polynomial $R_j$ has degree $i_j$ for $1\le j\le m$. By
the B\'ezout inequality \eqref{eq: geometry: Bezout} we have $\deg
V\le \prod_{j=1}^m\deg R_j=i_1\cdots i_m$. In other words, we have
the following statement.
\begin{corollary}\label{coro: geometry: radicality and degree Vr}
Let $p>2$. The polynomials $R_1\klk R_m$ define a radical ideal and
the variety $V$ has degree $\deg V\le\prod_{j=1}^m\deg R_j=i_1\cdots
i_m$.
\end{corollary}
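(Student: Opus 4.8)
The plan is to establish the two claims of Corollary \ref{coro: geometry: radicality and degree Vr} by combining the regular sequence property from Lemma \ref{lemma: geometry: V is complete inters} with the codimension bound on the singular locus from Theorem \ref{theorem: geometry: dimension singular locus}. First I would recall that $R_1\klk R_m\in\fq[\bfs Y]$ form a regular sequence (this is exactly the content of Lemma \ref{lemma: geometry: V is complete inters}, since the preimages $(\bfs\Pi^{\bfs n})^{-1}(W_j)=V(R_1\klk R_j)$ are equidimensional of the expected dimension $n-j$). Hence the quotient ring $\fq[\bfs Y]/(R_1\klk R_m)$ is Cohen--Macaulay, so it satisfies Serre's condition $(S_1)$ automatically; to get radicality via Serre's criterion it remains to verify $(R_0)$, namely that the ideal is regular in codimension zero.

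For the radicality claim I would invoke \cite[Theorem 18.15]{Eisenbud95}, which states precisely that for a regular sequence $R_1\klk R_m$, the ideal $(R_1\klk R_m)$ is radical provided the set of points where the Jacobian matrix $(\partial\bfs R/\partial\bfs Y)$ drops rank has codimension at least one in $V=V(R_1\klk R_m)$. Theorem \ref{theorem: geometry: dimension singular locus} gives us far more than we need here: that locus has codimension at least $2$ in $V$. Feeding this into \cite[Theorem 18.15]{Eisenbud95} yields immediately that $(R_1\klk R_m)$ is a radical ideal of $\fq[\bfs Y]$ (equivalently, of $\fq[\bfs X]$, since $\bfs Y$ is a linear change of coordinates with matrix defined over the $\fqi$'s but the resulting ideal is defined over $\fq$).

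For the degree bound I would first record that, under the normalization that $(\partial\bfs S/\partial\bfs Z)$ is lower triangular in row--echelon form with pivots in positions $i_1<\cdots<i_m\le n-r$, each $S_j$ is an affine-linear polynomial whose leading variable is $Z_{i_j}$, so $R_j=S_j(\Pi_1\klk\Pi_{n-r})$ has degree exactly $\deg\Pi_{i_j}=i_j$ in $\bfs Y$ (the higher elementary symmetric polynomials $\Pi_k$ with $k>i_j$ do not appear). Then $V=V(R_1)\cap\cdots\cap V(R_m)$, so the B\'ezout inequality \eqref{eq: geometry: Bezout} gives $\deg V\le\prod_{j=1}^m\deg R_j=i_1\cdots i_m$, which is the desired estimate.

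The only genuine subtlety — and the step I would flag as the place to be careful — is the bookkeeping around the two coordinate systems. Everything in Theorem \ref{theorem: geometry: dimension singular locus} and in the application of \cite[Theorem 18.15]{Eisenbud95} is phrased in the $\bfs Y$-coordinates, where the $R_j$ become linear in the elementary symmetric polynomials; one must note that the change of variables $\bfs X\mapsto\bfs Y$ is invertible and $\fq$-rational on the ideal (even though the matrices $A_i$ have entries in $\fqi$), so radicality and degree are preserved when transported back to $\fq[\bfs X]$. Similarly, asserting $\deg R_j=i_j$ requires knowing that the pivot structure really forces the top elementary symmetric polynomial occurring in $R_j$ to be $\Pi_{i_j}$; this is immediate from the row--echelon normalization but should be stated. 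Once these points are in place, both assertions follow formally, so I do not expect any serious obstacle beyond this accounting.
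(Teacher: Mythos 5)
Your proposal is correct and follows essentially the same route as the paper: radicality comes from the regular-sequence property (Lemma \ref{lemma: geometry: V is complete inters}) combined with the codimension bound on the non--full--rank locus (Theorem \ref{theorem: geometry: dimension singular locus}) via \cite[Theorem 18.15]{Eisenbud95}, and the degree bound follows from the row--echelon normalization ($\deg R_j=i_j$) and the B\'ezout inequality \eqref{eq: geometry: Bezout}. Your extra remarks on Serre's conditions and on the $\bfs X\leftrightarrow\bfs Y$ coordinate bookkeeping are sound elaborations of steps the paper leaves implicit, not a different argument.
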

%
%
\subsection{The projective closure of $V$}
In order to obtain estimates on the number of $\fq$--rational points
of $V$ we need information concerning the behavior of $V$ ``at
infinity''. For this purpose, we consider the projective closure
$\mathrm{pcl}(V)\subset\mathbb{P}^n$ of $V$, whose definition we now
recall. Consider the embedding of $\A^n$ into the projective space
$\Pp^n$ which assigns to any $\bfs y:=(y_1,\dots,y_n)\in\A^n$ the
point $(1:y_1:\dots:y_n)\in\Pp^n$. The closure
$\mathrm{pcl}(V)\subset\Pp^n$ of the image of $V$ under this
embedding in the Zariski topology of $\Pp^n$ is called the {\sf
projective closure} of $V$. The points of $\mathrm{pcl}(V)$ lying in
the hyperplane $\{Y_0=0\}$ are called the points of
$\mathrm{pcl}(V)$ at infinity.

It is well--known that $\mathrm{pcl} (V)$ is the variety of
$\mathbb{P}^n$ defined by the homogenization
$F^h\in\fq[Y_0,\ldots,Y_n]$ of each polynomial $F$ belonging to the
ideal $(R_1\klk R_m)\subset\fq[Y_1,\ldots,Y_n]$ (see, e.g., \cite[\S
I.5, Exercise 6]{Kunz85}). Denote by $(R_1\klk R_m)^h$ the ideal
generated by all the polynomials $F^h$ with $F\in (R_1\klk R_m)$.
Since the ideal $(R_1\klk R_m)$ is radical, the ideal $(R_1\klk
R_m)^h$ is also radical (see, e.g., \cite[\S I.5, Exercise
6]{Kunz85}). Furthermore, $\mathrm{pcl}(V)$ is equidimensional of
dimension $n-m$ (see, e.g., \cite[Propositions I.5.17 and
II.4.1]{Kunz85}) and degree equal to $\deg V$ (see, e.g.,
\cite[Proposition 1.11]{CaGaHe91}).

Now we discuss the behavior of $\mathrm{pcl} (V)$ at infinity.
According to \eqref{eq: geometry: def R_j}, each $R_j$ can be
expressed as
$$R_j=S_j(\Pi_1,\ldots,\Pi_{n-r})\quad (1\le j\le m),$$
where $S_1\klk S_m\in \fq[Z_1,\ldots,Z_{n-r}]$ are elements of
degree $1$ whose Jacobian matrix $(\partial \bfs{S}/\partial
\bfs{Z})$ with respect to $\bfs{Z} :=(Z_1,\ldots,Z_{n-r})$ has full
rank $m$. As before, we assume that $(\partial \bfs{S}/\partial
\bfs{Z})$ is lower triangular in row--echelon form, namely there
exist $1\le i_1<i_2<\cdots <i_m\le n-r$ such that
$$R_j=\alpha_j+\sum_{k=1}^{i_j}c_{j,k}\,\Pi_k,$$
where $c_{j,i_j}\not=0$ for $1\le j\le m$. Hence, the homogenization
of each $R_j$ is the following polynomial of $\fq[Y_0,\dots,Y_n]$:
\begin{equation} \label{eq: geometry: homogenizacion de R_j}
R_j^h=\alpha_jY_0^{i_j}+\sum_{k=1}^{i_j}c_{j,k}\,\Pi_k\,Y_0^{i_j-k}.
\end{equation}
It follows that $R_j^h(0,Y_1\klk Y_n)=\Pi_{i_j}$ ($1\le j\le m$).
Observe that the polynomials $\Pi_{i_1}\klk\Pi_{i_m}$ are a possible
choice for the polynomials $R_1\klk R_m$ of \eqref{eq: geometry: def
R_j}. Therefore, Lemma \ref{lemma: geometry: V is complete inters},
Theorem \ref{theorem: geometry: dimension singular locus} and
Corollary \ref{coro: geometry: radicality and degree Vr} hold with
$R_j:=\Pi_{i_j}$ for $1\le j\le m$.
\begin{lemma}\label{lemma: geometry: dim singular locus V at infinity}
Let $p>2$. Then $\mathrm{pcl}(V)$ has singular locus at infinity of
dimension at most $n-m-3$.
\end{lemma}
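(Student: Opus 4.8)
The plan is to reduce everything to the variety $V_\infty:=V(\Pi_{i_1}\klk\Pi_{i_m})\subset\A^n$ defined by the elementary symmetric polynomials $\Pi_{i_1}\klk\Pi_{i_m}$ — for which, as observed just before the statement, Lemma \ref{lemma: geometry: V is complete inters}, Theorem \ref{theorem: geometry: dimension singular locus} and Corollary \ref{coro: geometry: radicality and degree Vr} all apply — and then transfer the information obtained there to the points of $\mathrm{pcl}(V)$ at infinity. The link is the identity $R_j^h(0,\bfs Y)=\Pi_{i_j}$ $(1\le j\le m)$ noted after \eqref{eq: geometry: homogenizacion de R_j} (up to an irrelevant nonzero scalar), which shows that, inside the hyperplane $\{Y_0=0\}\cong\Pp^{n-1}$, the locus $\mathrm{pcl}(V)\cap\{Y_0=0\}$ is contained in the projective variety cut out by $\Pi_{i_1}\klk\Pi_{i_m}$.

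The first, and most delicate, step is to identify the homogeneous ideal of $\mathrm{pcl}(V)$ with $(R_1^h\klk R_m^h)$, so that a clean Jacobian criterion becomes available. Indeed, the affine trace $V(R_1^h\klk R_m^h)\cap\{Y_0\neq 0\}$ is $V$, of dimension $n-m$, while $V(R_1^h\klk R_m^h)\cap\{Y_0=0\}$ is the projective variety defined by $\Pi_{i_1}\klk\Pi_{i_m}$, of dimension $n-m-1$ by Lemma \ref{lemma: geometry: V is complete inters} applied with $R_j:=\Pi_{i_j}$. Hence $R_1^h\klk R_m^h$ form a regular sequence and $V(R_1^h\klk R_m^h)$ has no irreducible component contained in $\{Y_0=0\}$; consequently $Y_0$ is a nonzerodivisor modulo $(R_1^h\klk R_m^h)$, so this ideal is saturated with respect to $Y_0$ and therefore equals $(R_1\klk R_m)^h$, and localizing at $Y_0$ together with the radicality of $(R_1\klk R_m)$ (Corollary \ref{coro: geometry: radicality and degree Vr}) shows it is radical. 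Thus $\mathrm{pcl}(V)$ is an ideal--theoretic complete intersection defined by $R_1^h\klk R_m^h$, and a point $\bfs x=(0:y_1:\dots:y_n)$ at infinity is singular in $\mathrm{pcl}(V)$ if and only if the Jacobian matrix $(\partial R_j^h/\partial Y_l)_{1\le j\le m,\,0\le l\le n}(\bfs x)$ has rank less than $m$.

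The Jacobian computation at infinity is then immediate: for $1\le l\le n$ the partial derivative commutes with setting $Y_0=0$, so $(\partial R_j^h/\partial Y_l)(0,\bfs y)=(\partial\Pi_{i_j}/\partial Y_l)(\bfs y)$, and hence the $m\times n$ submatrix of $(\partial R_j^h/\partial Y_l)(\bfs x)$ formed by the columns $l=1\klk n$ is exactly the Jacobian $(\partial\Pi_{i_j}/\partial Y_l)(\bfs y)$. Consequently, if $\bfs x=(0:y_1:\dots:y_n)$ is a singular point of $\mathrm{pcl}(V)$, then $(\partial\Pi_{i_j}/\partial Y_l)(\bfs y)$ has rank less than $m$; since also $\Pi_{i_j}(\bfs y)=R_j^h(0,\bfs y)=0$, the point $\bfs y$ lies in the set $\Sigma_\infty\subset\A^n$ of points of $V_\infty$ at which $(\partial\Pi_{i_j}/\partial Y_l)$ does not have full rank.

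Finally, Theorem \ref{theorem: geometry: dimension singular locus} applied with $R_j:=\Pi_{i_j}$ gives that $\Sigma_\infty$ has codimension at least $2$ in $V_\infty$, and $\dim V_\infty=n-m$ by Lemma \ref{lemma: geometry: V is complete inters}, so $\dim\Sigma_\infty\le n-m-2$. Since $\Pi_{i_1}\klk\Pi_{i_m}$, their partial derivatives, and the maximal minors of $(\partial\Pi_{i_j}/\partial Y_l)$ are all homogeneous, $\Sigma_\infty$ is a cone, so its image in $\{Y_0=0\}\cong\Pp^{n-1}$ has dimension at most $n-m-3$; by the previous paragraph the singular locus of $\mathrm{pcl}(V)$ at infinity is contained in that image, which yields the stated bound. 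The only real obstacle in the argument is the second step — establishing that $\mathrm{pcl}(V)$ coincides, as a scheme, with the complete intersection $V(R_1^h\klk R_m^h)$, so that the Jacobian criterion may be applied directly; the remaining steps are routine once this is in place.
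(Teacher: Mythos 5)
Your proof is correct, but it does noticeably more work than necessary, and this extra work is exactly what the paper is careful to \emph{avoid} at this stage of its exposition. The paper's own proof never identifies $I(\mathrm{pcl}(V))$ with $(R_1^h,\ldots,R_m^h)$; it only uses the much weaker fact that each $R_j^h$ \emph{vanishes} on $\mathrm{pcl}(V)$. Vanishing alone already forces $\mathcal{T}_{\bfs y}(\mathrm{pcl}(V))\subset\ker\bigl(\partial R_j^h/\partial Y_\ell\bigr)(\bfs y)$, and since $\mathrm{pcl}(V)$ is known to be equidimensional of dimension $n-m$, a full-rank Jacobian at $\bfs y$ would give $\dim\mathcal{T}_{\bfs y}(\mathrm{pcl}(V))\le n-m$ and hence regularity of $\bfs y$. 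That is, one only needs the implication ``singular $\Rightarrow$ Jacobian of the $R_j^h$ drops rank,'' which is automatic for \emph{any} polynomials in $I(\mathrm{pcl}(V))$, and your first step — the saturation argument that $(R_1^h,\ldots,R_m^h)=(R_1,\ldots,R_m)^h$ is radical, so the two-sided Jacobian criterion is available — is overkill. The remaining steps (restricting to $\{Y_0=0\}$ to see the Jacobian of the $\Pi_{i_j}$, invoking Theorem~\ref{theorem: geometry: dimension singular locus} for $R_j:=\Pi_{i_j}$, and passing from the affine cone of dimension $\le n-m-2$ to a projective locus of dimension $\le n-m-3$) coincide with the paper's. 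It is also worth noting that the ideal-theoretic description of $\mathrm{pcl}(V)$ that you derive up front is, in the paper, a \emph{consequence} established later in Theorem~\ref{theorem: geometry: proj closure of V is abs irred}, which in turn cites this very lemma; your route avoids circularity (since your saturation argument is independent), but it front-loads a piece of structural work that the paper deliberately defers. So: the obstacle you flag as ``the only real'' one is in fact not needed for this lemma at all.
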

\begin{proof}
Let $\Sigma^{\infty}\subset\mathbb{P}^n$ denote the singular locus
of $\mathrm{pcl}(V)$ at infinity, namely the set of singular points
of $\mathrm{pcl}(V)$ lying in the hyperplane $\{Y_0=0\}$. Let $\bfs
y:=(0:y_1:\dots:y_n)$ be an arbitrary point of $\Sigma^{\infty}$.
Since the polynomials $R_j^h$ vanish identically in
$\mathrm{pcl}(V)$, we have $R_j^h(\bfs y)=\Pi_{i_j}(y_1\klk y_n)=0$
for $1\le j\le m$. Let $(\partial \bfs\Pi_\mathcal{I}/\partial \bfs
Y)$ be the Jacobian matrix of $\Pi_{i_1}\klk \Pi_{i_m}$ with respect
to $Y_1\klk Y_n$. We have
\begin{equation}\label{eq: geometry: rank singular points at infinity}
\mathrm{rank}\left(\frac{\partial \bfs\Pi_\mathcal{I}}{\partial \bfs
Y}\right)(\bfs y)<m,
\end{equation}
for if not, we would have $\dim \mathcal{T}_{\bfs
y}(\mathrm{pcl}(V))\le n-m$, which would imply that $\bfs y$ is a
nonsingular point of $\mathrm{pcl}(V)$, contradicting thus the
hypothesis on $\bfs y$.

Since the polynomials $\Pi_{i_1}\klk \Pi_{i_m}$ satisfy the
hypotheses of Theorem \ref{theorem: geometry: dimension singular
locus}, the points satisfying \eqref{eq: geometry: rank singular
points at infinity} form an affine equidimensional cone of dimension
at most $n-m-2$. We conclude that $\Sigma^\infty\subset\Pp^n$ has
dimension at most $n-m-3$.
\end{proof}
Now we are able to completely characterize the behavior of
$\mathrm{pcl}(V)$ at infinity.
\begin{theorem}\label{theorem: geometry: V at infinity}
Let $p>2$. Then $\mathrm{pcl}(V)\cap\{Y_0=0\}\subset\Pp^{n-1}$ is a
normal ideal--theoretic complete intersection of dimension $n-m-1$
and degree $i_1\cdots i_m$.
\end{theorem}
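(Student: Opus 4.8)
The plan is to identify the variety at infinity explicitly and show it is covered by the previously established machinery. First I would observe that, by \eqref{eq: geometry: homogenizacion de R_j}, we have $R_j^h(0,Y_1\klk Y_n)=\Pi_{i_j}$ for $1\le j\le m$, and since $(R_1\klk R_m)^h$ is a radical ideal defining $\mathrm{pcl}(V)$, the scheme $\mathrm{pcl}(V)\cap\{Y_0=0\}$ is defined by setting $Y_0=0$ together with all homogenizations $F^h$; in particular it is contained in the variety $V(\Pi_{i_1}\klk\Pi_{i_m})\subset\Pp^{n-1}$ cut out by the elementary symmetric polynomials $\Pi_{i_1}\klk\Pi_{i_m}$ (viewed as homogeneous polynomials of $\fq[Y_1\klk Y_n]$). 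The first key point is to argue the reverse inclusion, so that $\mathrm{pcl}(V)\cap\{Y_0=0\}=V(\Pi_{i_1}\klk\Pi_{i_m})$ in $\Pp^{n-1}$: this is the standard fact that for a variety whose defining equations have ``independent leading forms'' the intersection with the hyperplane at infinity is governed precisely by those leading forms. Concretely, since $\Pi_{i_1}\klk\Pi_{i_m}$ already form a regular sequence (by Lemma \ref{lemma: geometry: V is complete inters} applied with $R_j:=\Pi_{i_j}$, as remarked just before the statement), the variety $V(\Pi_{i_1}\klk\Pi_{i_m})\subset\A^n$ is equidimensional of dimension $n-m$, hence the associated projective cone in $\Pp^{n-1}$ has dimension $n-m-1$; on the other hand $\mathrm{pcl}(V)$ has dimension $n-m$ and every component meets $\{Y_0=0\}$ in a subvariety of dimension exactly $n-m-1$, so dimension and inclusion together force equality of the underlying sets.

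Next I would upgrade this to the ideal--theoretic and complete--intersection claims. By the remark preceding the statement, Corollary \ref{coro: geometry: radicality and degree Vr} applies to $R_j:=\Pi_{i_j}$, so $(\Pi_{i_1}\klk\Pi_{i_m})$ is a radical ideal of $\fq[\bfs Y]$; these polynomials are homogeneous, so the ideal they generate in $\fq[Y_1\klk Y_n]$ is already radical and homogeneous, whence $\mathrm{pcl}(V)\cap\{Y_0=0\}$ is an ideal--theoretic complete intersection in $\Pp^{n-1}$ defined by $\Pi_{i_1}\klk\Pi_{i_m}$. Its dimension is $n-m-1$ as computed above, and by the B\'ezout theorem \eqref{eq: geometry: Bezout eq} its degree is $\prod_{j=1}^m\deg\Pi_{i_j}=i_1\cdots i_m$.

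The remaining, and most delicate, point is normality, i.e.\ that the singular locus of $\mathrm{pcl}(V)\cap\{Y_0=0\}$ has codimension at least $2$ inside it. Here I would invoke Theorem \ref{theorem: geometry: dimension singular locus} once more with $R_j:=\Pi_{i_j}$: it tells us that the set of points $\bfs y$ in the affine variety $V(\Pi_{i_1}\klk\Pi_{i_m})\subset\A^n$ at which the Jacobian $(\partial\bfs\Pi_{\mathcal I}/\partial\bfs Y)(\bfs y)$ drops rank has codimension at least $2$, hence is an equidimensional cone of dimension at most $n-m-2$; passing to $\Pp^{n-1}$ this is a projective variety of dimension at most $n-m-3$, exactly the content of Lemma \ref{lemma: geometry: dim singular locus V at infinity}. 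Since a point of $\mathrm{pcl}(V)\cap\{Y_0=0\}$ is singular (as a point of this $(n-m-1)$--dimensional complete intersection in $\Pp^{n-1}$) precisely when the relevant Jacobian fails to have full rank $m$, the singular locus of $\mathrm{pcl}(V)\cap\{Y_0=0\}$ has dimension at most $n-m-3$, i.e.\ codimension at least $2$. The subtlety to watch for is the bookkeeping between the ambient space $\A^n$ (where the Jacobian criterion of Theorem \ref{theorem: geometry: dimension singular locus} lives) and the projective space $\Pp^{n-1}$ in which the variety at infinity sits, and the fact that all the polynomials involved are homogeneous, so that affine cones correspond to projective varieties with a drop of exactly one in dimension. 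Finally, being a normal set--theoretic complete intersection in $\Pp^{n-1}$, Theorem \ref{theorem: normal complete int implies irred} could additionally be noted to give absolute irreducibility, although only normality, dimension and degree are asserted in the statement.
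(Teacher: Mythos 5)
The inclusion $\mathrm{pcl}(V)\cap\{Y_0=0\}\subset V(\Pi_{i_1}\klk\Pi_{i_m})$, the computation of degree via B\'ezout, and the bound on the singular locus are all handled correctly and match the paper. However, your argument for the reverse inclusion, and hence for the set equality $\mathrm{pcl}(V)\cap\{Y_0=0\}=V(\Pi_{i_1}\klk\Pi_{i_m})$, has a genuine gap. The ``standard fact that \ldots the intersection with the hyperplane at infinity is governed precisely by those leading forms'' is not true at this level of generality: the projective closure is cut out by the homogenizations of \emph{all} elements of the ideal, not just of the generators, and the naive variety $V(R_1^h\klk R_m^h)$ can be strictly larger than $\mathrm{pcl}(V)$ (already $V(X,\,XY-1)\subset\A^2$ is empty while $V(X,\,XY-Z^2)\subset\Pp^2$ is not). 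Your fallback, ``dimension and inclusion together force equality of the underlying sets,'' is also insufficient: an equidimensional variety $B$ of dimension $d$ can contain a proper equidimensional subvariety $A$ of the same dimension $d$ whenever $B$ is reducible, so without knowing that $V(\Pi_{i_1}\klk\Pi_{i_m})\subset\Pp^{n-1}$ is \emph{irreducible} you cannot conclude the two sets coincide.

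This is precisely what the paper supplies and you have misplaced. The paper first uses Lemma~\ref{lemma: geometry: V is complete inters}, Theorem~\ref{theorem: geometry: dimension singular locus} and Corollary~\ref{coro: geometry: radicality and degree Vr} applied to the choice $R_j:=\Pi_{i_j}$ to show that $V(\Pi_{i_1}\klk\Pi_{i_m})\subset\Pp^{n-1}$ is a set--theoretic complete intersection of dimension $n-m-1$ whose singular locus has codimension at least $2$, hence is \emph{normal}, and therefore, by Theorem~\ref{theorem: normal complete int implies irred} (Hartshorne connectedness), \emph{absolutely irreducible}. Only \emph{then} does the equality $\mathrm{pcl}(V)\cap\{Y_0=0\}=V(\Pi_{i_1}\klk\Pi_{i_m})$ follow: every component of $\mathrm{pcl}(V)\cap\{Y_0=0\}$ has dimension at least $n-m-1$ and lies inside the irreducible $(n-m-1)$--dimensional variety $V(\Pi_{i_1}\klk\Pi_{i_m})$, so must equal it. You do prove the normality statement (correctly, as Lemma~\ref{lemma: geometry: dim singular locus V at infinity}), and you even mention Theorem~\ref{theorem: normal complete int implies irred} in your final sentence, but only as something that ``could additionally be noted''. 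In fact irreducibility is not an optional add-on: it is the missing hypothesis needed to make your set-equality step valid. Reordering your argument so that normality and absolute irreducibility of $V(\Pi_{i_1}\klk\Pi_{i_m})$ are established \emph{before} the equality of sets repairs the proof and recovers the paper's argument.
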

\begin{proof}
From \eqref{eq: geometry: homogenizacion de R_j} it is easy to see
that the polynomials $\Pi_{i_1}\klk \Pi_{i_m}$ vanish identically in
$\mathrm{pcl}(V)\cap\{Y_0=0\}$. Lemma \ref{lemma: geometry: V is
complete inters}, Theorem \ref{theorem: geometry: dimension singular
locus} and Corollary \ref{coro: geometry: radicality and degree Vr}
show that the variety of $\A^n$ defined by $\Pi_{i_1}\klk \Pi_{i_m}$
is an affine equidimensional cone of dimension $n-m$, degree at most
$i_1\cdots i_m$ and singular locus of dimension at most $n-m-2$. It
follows that the projective variety of $\Pp^{n-1}$ defined by these
polynomials is equidimensional of dimension $n-m-1$, degree at most
$i_1\cdots i_m$ and singular locus of dimension at most $n-m-3$.

Observe that $V(\bfs\Pi_\mathcal{I}):=V(\Pi_{i_1}\klk
\Pi_{i_m})\subset\Pp^{n-1}$ is a set--theoretic complete
intersection, whose singular locus has codimension at least $2$. We
deduce that $V(\bfs\Pi_\mathcal{I})$  is normal and Theorem
\ref{theorem: normal complete int implies irred} shows that it is
absolutely irreducible.

On the other hand, since $\mathrm{pcl}(V)$ is equidimensional of
dimension $n-m$, each irreducible component of
$\mathrm{pcl}(V)\cap\{Y_0=0\}$ has dimension at least $n-m-1$.
Furthermore, $\mathrm{pcl}(V)\cap\{Y_0=0\}$ is contained in the
projective variety $V(\bfs\Pi_\mathcal{I})$, which is absolutely
irreducible of dimension $n-m-1$. We conclude that
$\mathrm{pcl}(V)\cap\{Y_0=0\}$ is also absolutely irreducible of
dimension $n-m-1$, and hence
$$\mathrm{pcl}(V)\cap\{Y_0=0\}=V(\Pi_{i_1}\klk \Pi_{i_m}).$$

Finally, by \cite[Theorem 18.15]{Eisenbud95} we see that
$\Pi_{i_1}\klk \Pi_{i_m}$ define a radical ideal. As a consequence
of the B\'ezout theorem \eqref{eq: geometry: Bezout eq},
$\deg\big(\mathrm{pcl}(V)\cap\{Y_0=0\}\big)= \prod_{j=1}^{m}\deg
\Pi_{i_j}=i_1\cdots i_m$. This finishes the proof of the theorem.
\end{proof}

We conclude this section with a statement that summarizes all the
facts we shall need concerning the projective closure
$\mathrm{pcl}(V)$.
\begin{theorem}\label{theorem: geometry: proj closure of V is abs irred}
Let $p>2$. Then $\mathrm{pcl}(V)\subset\Pp^n$ is a normal
ideal--theoretic complete intersection of dimension $n-m$ and degree
$i_1\cdots i_m$.
\end{theorem}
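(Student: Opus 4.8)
The plan is to bootstrap from the two previous theorems: Lemma \ref{lemma: geometry: V is complete inters} already tells us $V$ is a set--theoretic complete intersection of dimension $n-m$, so $\mathrm{pcl}(V)$ is equidimensional of dimension $n-m$ and degree $\deg V$, and is cut out set--theoretically by $m$ hypersurfaces. To upgrade this to the full statement, the first step is to establish that $\mathrm{pcl}(V)$ is \emph{normal}, i.e. regular in codimension $1$; by Theorem \ref{theorem: normal complete int implies irred} this will immediately yield absolute irreducibility.

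The singular locus of $\mathrm{pcl}(V)$ splits into its affine part $\Sigma\subset V$ and its part at infinity $\Sigma^{\infty}\subset\mathrm{pcl}(V)\cap\{Y_0=0\}$. Theorem \ref{theorem: geometry: dimension singular locus} gives $\dim\Sigma\le n-m-2$, and Lemma \ref{lemma: geometry: dim singular locus V at infinity} gives $\dim\Sigma^{\infty}\le n-m-3$; more is true at infinity, since Theorem \ref{theorem: geometry: V at infinity} shows the hyperplane section itself is absolutely irreducible, hence cannot be a component of the singular locus. Taking the union, $\dim\mathrm{Sing}(\mathrm{pcl}(V))\le n-m-2$, so $\mathrm{pcl}(V)$ is regular in codimension $1$, hence normal, hence absolutely irreducible by Theorem \ref{theorem: normal complete int implies irred}.

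It then remains to pass from ``set--theoretic'' to ``ideal--theoretic'' and to pin down the degree. For the degree: $\deg\mathrm{pcl}(V)=\deg V$, and since each $R_j$ has degree $i_j$ the Bézout inequality gives $\deg V\le i_1\cdots i_m$; conversely, Theorem \ref{theorem: geometry: V at infinity} gives a hyperplane section $\mathrm{pcl}(V)\cap\{Y_0=0\}$ of degree exactly $i_1\cdots i_m$, and since $\mathrm{pcl}(V)$ is irreducible of dimension $n-m$, taking a hyperplane not containing it does not decrease the degree, so $\deg\mathrm{pcl}(V)\ge i_1\cdots i_m$, forcing equality $\deg\mathrm{pcl}(V)=i_1\cdots i_m$. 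For the ideal--theoretic claim: the homogenizations $R_1^h\klk R_m^h$ have degrees $i_1\klk i_m$ whose product $i_1\cdots i_m$ equals $\deg\mathrm{pcl}(V)$, so $R_1^h\klk R_m^h$ form a regular sequence (an equidimensional set--theoretic complete intersection of the expected degree is an arithmetically-Cohen--Macaulay complete intersection), and since $\mathrm{pcl}(V)$ is normal — regular in codimension $1$ and Cohen--Macaulay, hence satisfying Serre's criterion — the ideal $(R_1^h\klk R_m^h)$ is radical (alternatively, invoke \cite[Theorem 18.15]{Eisenbud95} exactly as in the proof of Theorem \ref{theorem: geometry: V at infinity}, using that the Jacobian of $R_1^h\klk R_m^h$ drops rank only on a set of codimension $\ge 2$ in $\mathrm{pcl}(V)$). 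Hence $I(\mathrm{pcl}(V))=(R_1^h\klk R_m^h)$ and $\mathrm{pcl}(V)$ is an ideal--theoretic complete intersection.

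The main obstacle I anticipate is the bookkeeping at infinity: one must be sure that $\Sigma^{\infty}$ is genuinely the singular locus of the hyperplane section $\mathrm{pcl}(V)\cap\{Y_0=0\}$ \emph{together with} any points of $\mathrm{pcl}(V)$ at infinity that are singular on the ambient variety but lie on the smooth part of the section — this is why we need both Lemma \ref{lemma: geometry: dim singular locus V at infinity} (bounding $\dim\Sigma^{\infty}$ directly) and Theorem \ref{theorem: geometry: V at infinity} (to rule out the section itself being singular everywhere), rather than either one alone. A secondary subtlety is justifying the degree lower bound via a generic hyperplane section, which requires knowing $\mathrm{pcl}(V)$ is irreducible of positive dimension — so the irreducibility step must genuinely precede the degree computation, and the argument cannot be reordered.
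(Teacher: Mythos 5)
The overall strategy is right, and several pieces are genuinely correct (the singular-locus bookkeeping, combining Theorem \ref{theorem: geometry: dimension singular locus} with Lemma \ref{lemma: geometry: dim singular locus V at infinity} to get codimension $\geq 2$, and then deducing absolute irreducibility via Theorem \ref{theorem: normal complete int implies irred}). Your degree argument is actually a legitimate alternative to the paper's: you pin down $\deg\mathrm{pcl}(V)=i_1\cdots i_m$ by squeezing it between the B\'ezout upper bound $\deg V\le i_1\cdots i_m$ and the lower bound $\deg\mathrm{pcl}(V)\ge\deg\big(\mathrm{pcl}(V)\cap\{Y_0=0\}\big)=i_1\cdots i_m$ coming from Theorem \ref{theorem: geometry: V at infinity} together with the B\'ezout inequality for a hyperplane section; the paper instead gets the degree by first proving the ideal is radical and then applying the B\'ezout \emph{equality}.

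However, there is a genuine gap at the very start: you assert that since $V$ is a set-theoretic complete intersection in $\A^n$, its projective closure ``is cut out set-theoretically by $m$ hypersurfaces''. This does \emph{not} follow. The projective closure of a codimension-$m$ affine set-theoretic complete intersection need not be a set-theoretic complete intersection in $\Pp^n$: the homogenizations $R_1^h,\ldots,R_m^h$ define a closed set $V(\bfs{R}^h)$ that a priori contains $\mathrm{pcl}(V)$ but could be strictly larger, with extra components supported at infinity. And the Hartshorne connectedness theorem (Theorem \ref{theorem: normal complete int implies irred}), on which your whole irreducibility step rests, applies only to set-theoretic complete intersections in $\Pp^n$. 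The paper closes this gap with a dimension count that you omit: it observes the inclusions $V(\bfs{R}^h)\cap\{Y_0\neq 0\}\subset V(\bfs{R})$ and $V(\bfs{R}^h)\cap\{Y_0=0\}\subset V(\bfs\Pi_{\mathcal I})$, where the right-hand sides have dimensions $n-m$ and $n-m-1$ by Lemma \ref{lemma: geometry: V is complete inters}; hence $V(\bfs{R}^h)$ has dimension $\le n-m$ and, being cut out by $m$ forms, is an equidimensional set-theoretic complete intersection of dimension $n-m$, so it has no component contained in $\{Y_0=0\}$ and therefore equals $\mathrm{pcl}(V)$. Only after this identification do the Hartshorne argument, the radicality via \cite[Theorem 18.15]{Eisenbud95}, and the B\'ezout degree computation become legitimate. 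Note also that the subtlety you flag in your last paragraph (distinguishing $\mathrm{Sing}(\mathrm{pcl}(V))\cap\{Y_0=0\}$ from $\mathrm{Sing}\big(\mathrm{pcl}(V)\cap\{Y_0=0\}\big)$) is indeed worth watching, but the paper's Lemma \ref{lemma: geometry: dim singular locus V at infinity} already bounds the former directly, so that is not where the difficulty lies.
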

\begin{proof}
We have already shown that $\mathrm{pcl}(V)$ is equidimensional of
dimension $n-m$ and degree at most $i_1\cdots i_m$. According to
Theorem \ref{theorem: geometry: dimension singular locus}, the
singular locus of $\mathrm{pcl}(V)$ lying in the open set
$\{Y_0\not=0\}$ has dimension at most $n-m-2$, while Lemma
\ref{lemma: geometry: dim singular locus V at infinity} shows that
its singular locus at infinity has dimension at most $n-m-3$. We
conclude that the singular locus of $\mathrm{pcl}(V)$ has dimension
at most $n-m-2$.

Observe that $\mathrm{pcl}(V)$ is contained in the projective
variety $V(\bfs{R}^h):=V(R_j^h:1\le j\le m)$. We have the inclusions
$$
V(\bfs{R}^h)\cap\{Y_0\not=0\} \subset V(\bfs{R}),\quad
V(\bfs{R}^h)\cap\{Y_0=0\}\subset V(\bfs\Pi_\mathcal{I}).
$$
Both $\{R_j:1\le j\le m\}$ and $\{\Pi_{i_j}:1\le j\le m\}$ satisfy
the conditions of the statement of Lemma \ref{lemma: geometry: V is
complete inters}. It follows that $V(\bfs{R})\subset\A^n$ is
equidimensional of dimension $n-m$ and
$V(\bfs\Pi_\mathcal{I})\subset\Pp^{n-1}$ is equidimensional of
dimension $n-m-1$. We conclude that $V(\bfs{R}^h)\subset\Pp^n$ has
dimension at most $n-m$. Taking into account that it is defined by
$m$ polynomials, we deduce that it is a set--theoretic complete
intersection of dimension $n-m$. This implies that it is
equidimensional of dimension $n-m$, and therefore has no irreducible
component contained in the hyperplane at infinity. In particular, it
agrees with the projective closure of its restriction to $\A^n$
(see, e.g., \cite[Proposition I.5.17]{Kunz85}). As such a
restriction is the affine variety $V=V(\bfs{R})$, we deduce that
$$
\mathrm{pcl}(V)=V(\bfs{R}^h).
$$

Since its singular locus has codimension at least $2$, we have that
$V(\bfs{R}^h)$ is a normal set--theoretic complete intersection.
Finally, by \cite[Theorem 18.15]{Eisenbud95} we see that the
polynomials $R_j^h$ $(1\le j\le m)$ define a radical ideal. Then the
B\'ezout theorem \eqref{eq: geometry: Bezout eq} implies
$\deg\mathrm{pcl}\big(V)=\prod_{j=1}^m\deg R_j^h=i_1\cdots i_m$ and
finishes the proof of the theorem.
\end{proof}

%
%

\section{The number of polynomials in $\mathcal{A}_{\bfs
\lambda}$}\label{section: estimates}
Let $A_r,\ldots,A_{n-1}$ be indeterminates over $\cfq$ and set $\bfs
A:=(A_{n-1},\ldots,A_r)$. Let be given linear forms
$L_1,\ldots,L_m\in\fq[\bfs A]$ which are linearly independent and
$\bfs\alpha:=(\alpha_1\klk\alpha_m)\in\fq^m$. Set
$\bfs{L}:=(L_1,\ldots,L_m)$ and let
$\mathcal{A}:=\mathcal{A}(\bfs{L},\bfs \alpha)$ be the set defined
in the following way:
$$\mathcal{A}:=\left\{T^n+a_{n-1}T^{n-1}\plp a_0\in\fq[T]:
L_j(a_r\klk a_{n-1})+\alpha_j=0\ (1\le j\le m)\right\}.$$
As before, we shall assume that the Jacobian matrix $(\partial \bfs
L/\partial\bfs A)$ is lower triangular in row echelon form and
denote by $1\le i_1<\cdots<i_m\le n-r$ the positions corresponding
to the pivots. Given a factorization pattern
$\bfs\lambda:=1^{\lambda_1}\cdots n^{\lambda_n}$, in this section we
determine the asymptotic behavior of the cardinality of the set
$\mathcal{A}_{\bfs{\lambda}}$ of elements of $\mathcal{A}$ with
factorization pattern $\bfs\lambda$.

For this purpose, in Corollary \ref{coro: fact patterns: systems
pattern lambda} we obtain polynomials $R_1\klk R_m\in\fq[\bfs X]:=
\fq[X_1\klk X_n]$ whose common $\fq$--rational zeros are related to
the quantity $|\mathcal{A}_{\bfs{\lambda}}|$. More precisely, let
$\bfs x:=(\bfs x_{i,j}:1\le i\le n,1\le j\le \lambda_i)\in\fq^n$ be
a common $\fq$--rational zero of $R_1\klk R_m$ of type $\bfs\lambda$
(see Definition \ref{def: fact patterns: type lambda}). We associate
to $\bfs x$ an element $f\in\mathcal{A}_{\bfs\lambda}$ having
$Y_{\ell_{i,j}+k}(\bfs x_{i,j})$ as an $\fqi$--root for $1\le i\le
n$, $1\le j\le\lambda_i$ and $1\le k\le i$. Here, $Y_{\ell_{i,j}+k}$
is the linear form
\begin{equation}\label{eq: estimates: definition Y_ell}
Y_{\ell_{i,j}+k}:=X_{\ell_{i,j}+1}\sigma_{k,i}(\theta_i)+\dots
+X_{\ell_{i,j}+i}\sigma_{k,i}(\theta_i^{q^{i-1}}),
\end{equation}
where $\{\sigma_{k,i}:1\le k\le i\}$ are the elements of the Galois
group $\G_i$ of $\fqi$ over $ \fq$.

Let $\mathcal{A}_{\bfs\lambda}^{sq}:=\{f\in
\mathcal{A}_{\bfs\lambda}: f \mbox{ is square--free}\}$ and let
$\mathcal{A}_{\bfs\lambda}^{nsq}:=\mathcal{A}_\lambda\setminus
\mathcal{A}_{\bfs\lambda}^{sq}$. Corollary \ref{coro: fact patterns:
systems pattern lambda} asserts that any element $f\in
\mathcal{A}_{\bfs\lambda}^{sq}$ is associated with
$w(\bfs\lambda):=\prod_{i=1}^n i^{\lambda_i}\lambda_i!$ common
$\fq$--rational zeros of $R_1\klk R_m$ of type $\bfs\lambda$.
Observe that $\bfs x\in\fq^n$ is of type $\bfs\lambda$ if and only
if $Y_{\ell_{i,j}+k_1}(\bfs x) \neq Y_{\ell_{i,j}+k_2}(\bfs x)$ for
$1\leq i\leq n$, $1\leq j\leq \lambda_i$ and $1\leq k_1 <k_2 \leq
i$. Furthermore, $\bfs x\in\fq^n$ of type $\bfs\lambda$ is
associated with $f\in\mathcal{A}_{\bfs\lambda}^{sq}$ if and only if
$Y_{\ell_{i,j_1}+k_1}(\bfs x) \neq Y_{\ell_{i,j_2}+k_2}(\bfs x)$ for
$1\leq i\leq n$, $1\leq j_1<j_2\leq \lambda_i$ and $1\leq k_1 <k_2
\leq i$. As a consequence, we see that
\begin{align}
\big|\mathcal{A}_{\bfs\lambda}^{sq}\big|= \mathcal{T}(\bfs\lambda)
\big|\big\{\bfs x \in \fq^n: \,R_1(\bfs x)&=\cdots=R_m(\bfs x)=0,\,
Y_{\ell_{i,j_1}+k_1}(\bfs x) \neq
Y_{\ell_{i,j_2}+k_2}(\bfs x)\nonumber  \\
&(1\leq i\leq n,\, 1\leq j_1 < j_2\leq \lambda_i,\,1\leq k_1 <k_2
\leq i)\big\}\big|,\label{eq: estimates: sq with pattern lambda}
\end{align}
where $\mathcal{T}(\bfs\lambda):={1}/w(\bfs\lambda)$. The results of
Section \ref{section: geometry of V} will allow us to establish the
asymptotic behavior of $|\mathcal{A}_{\bfs\lambda}^{sq}|$.
%
%
\subsection{The number of $\fq$-rational points of normal
complete intersections}
Let $V\subset\A^n$ be the variety defined by the polynomials
$R_1\klk R_m\in\fq[\bfs X]$ of \eqref{eq: geometry: def R_j}. Denote
by $\mathrm{pcl}(V)\subset\Pp^n$ the projective closure of $V$ and
by
$\mathrm{pcl}(V)^{\infty}:=\mathrm{pcl}(V)\cap\{Y_0=0\}\subset\Pp^{n-1}$
the set of points of $\mathrm{pcl}(V)$ at infinity. Theorems
\ref{theorem: geometry: V at infinity} and \ref{theorem: geometry:
proj closure of V is abs irred} assert that
$\mathrm{pcl}(V)^{\infty}$ and $\mathrm{pcl}(V)$ are
$\fq$--definable normal ideal--theoretic complete intersections of
dimension $n-m-1$ and $n-m$ respectively, both of degree
$\delta_{\bfs L}:=i_1\cdots i_m$.

In what follows, we shall use an estimate on the number of
$\fq$--rational points of a projective normal complete intersection
of \cite{CaMaPr13} (see also \cite{CaMa07} or \cite{GhLa02a} for
other estimates). More precisely, if $W\subset \Pp^n$ is a normal
complete intersection defined over $\fq$ of dimension $n-l\geq 2$,
degree $\delta$ and multidegree $\bfs d:=(d_1,\ldots,d_l)$, then the
following estimate holds (see \cite[Theorem 1.3]{CaMaPr13}):
\begin{equation}\label{eq: estimates: normal var CaMaPr}
\big||W(\fq)|-p_{n-l}\big| \leq
(\delta(D-2)+2)q^{n-l-\frac{1}{2}}+14D^2\delta^2 q^{n-l-1},
\end{equation}
where $p_{n-l}:=q^{n-l}+q^{n-l-1}+\cdots+q+1=|\Pp^{n-l}(\fq)|$ and
$D:=\sum_{i=1}^l(d_i-1)$.

First we estimate the number of $\fq$--rational points of $V$. By
(\ref{eq: estimates: normal var CaMaPr}), we have
\begin{align*}
\big||\mathrm{pcl}(V)(\fq)|-p_{n-m}\big| &\leq
(\delta_{\bfs L}(D_{\bfs L}-2)+2)q^{n-m-\frac{1}{2}}+14D_{\bfs L}^2\delta_{\bfs L}^2q^{n-m-1},\\
\big||\mathrm{pcl}(V)^{\infty}(\fq)|-p_{n-m-1}\big| &\leq
(\delta_{\bfs L}(D_{\bfs L}-2)+2)q^{n-m-\frac{3}{2}}+14D_{\bfs
L}^2\delta_{\bfs L}^2q^{n-m-2},
\end{align*}
where $\delta_{\bfs L}:=i_1\cdots i_m$  and $D_{\bfs
L}:=\sum_{j=1}^m (i_j-1)$. Hence, we obtain
\begin{align}
    \big||V(\fq)|-q^{n-m}\big|& =
    \big||\mathrm{pcl}(V)(\fq)|-|\mathrm{pcl}(V)^{\infty}(\fq)|-
      p_{n-m}+p_{n-m-1}\big|\nonumber\\[1ex]
      & \le
      \big||\mathrm{pcl}(V)(\fq)|-p_{n-m}\big|+
      \big||\mathrm{pcl}(V)^{\infty}(\fq)|-p_{n-m-1}\big|
      \nonumber\\[1ex] &\le   (q+1) q^{n-m-2}\big((\delta_{\bfs L}(D_{\bfs L}-2)+2)
      q^{1/2} +14 D_{\bfs L}^2 \delta_{\bfs L}^2\big). \label{eq: estimates: Vn todos}
  \end{align}

Let $V^{=}$ be the affine subvariety of $V \subset \A^n$ defined by
$$V^{=}:=\mathop{\bigcup_{1\le i\le n}}_{ 1\leq j_1 <j_2\leq
\lambda _i,\, \,  1\leq k_1 <k_2 \leq i}
V\cap\{Y_{\ell_{i,j_1}+k_1}=Y_{\ell_{i,j_2}+k_2}\},
$$
where $Y_{\ell_{i,j}+k}$ are the linear forms of \eqref{eq:
estimates: definition Y_ell}. Then \eqref{eq: estimates: sq with
pattern lambda} shows that $V^=$ represents the vector of
coefficients of roots of non square--free polynomials. Let
$V^{\neq}(\fq):=V(\fq) \setminus V^{=}(\fq)$. Observe that (\ref{eq:
estimates: sq with pattern lambda}) we may reexpressed as
\begin{equation}\label{eq: estimates: A_lambda^sq}
|\mathcal{A}_{\bfs\lambda}^{sq}| =\mathcal{T}(\bfs\lambda)
\big|V^{\neq}(\fq)\big|
\end{equation}
By  the equality $\mathcal{T}(\bfs\lambda)\big|V^{\neq}(\fq)\big|
=\mathcal{T}(\bfs\lambda)\big|V(\fq)\big|-\mathcal{T}(\bfs\lambda)\big|V^{=}(\fq)\big|$
and \eqref{eq: estimates: A_lambda^sq}, we obtain
\begin{align}
\big||\mathcal{A}_{\bfs\lambda}|
-\mathcal{T}(\bfs\lambda)\,q^{n-m}\big|&=
\big||\mathcal{A}_{\bfs\lambda}^{sq}|+
|\mathcal{A}_{\bfs\lambda}^{nsq}|-\mathcal{T}(\bfs\lambda)q^{n-m}\big|\nonumber\\
&=\big|\mathcal{T}(\bfs\lambda)\,|V^{\neq}(\fq)|
+|\mathcal{A}_{\bfs\lambda}^{nsq}|-\mathcal{T}(\bfs\lambda)q^{n-m}\Big|\nonumber\\
&=\big|\mathcal{T}(\bfs\lambda)\,
|V(\fq)\big|-\mathcal{T}(\bfs\lambda)|V^{=}(\fq)|
+|\mathcal{A}_{\bfs\lambda}^{nsq}|-\mathcal{T}(\bfs\lambda)q^{n-m}\big|\nonumber\\
& \leq \mathcal{T}(\bfs\lambda)\,\big||V(\fq)|-q^{n-m}\big|+
\big||\mathcal{A}_{\bfs\lambda}^{nsq}|-\mathcal{T}(\bfs\lambda)|V^{=}(\fq)|\big|.
\label{eq: estimates: estimate A_lambda}
\end{align}
The first term in the right--hand side is bounded using (\ref{eq:
estimates: Vn todos}). On the other hand,
\begin{equation}\label{eq: estimates: rough bound A_lambda^nsq}
\big||\mathcal{A}_{\bfs\lambda}^{nsq}|
-\mathcal{T}(\bfs\lambda)|V^{=}(\fq)|\big| =|\mathcal{A}_{\bfs
\lambda}^{nsq}|-\mathcal{T}(\bfs\lambda)\, |V^{=}(\fq)| \leq
|\mathcal{A}_{\bfs \lambda}^{nsq}|.
\end{equation}
As a consequence, by \eqref{eq: estimates: Vn todos}, \eqref{eq:
estimates: estimate A_lambda} and \eqref{eq: estimates: rough bound
A_lambda^nsq} we obtain the following result.
\begin{theorem}\label{theorem: estimates: bound without discr locus}
For $p>2$, $q > n$ and $3\le r\le n-m$, we have
\begin{align*}
\big||\mathcal{A}_{\bfs\lambda}|
&-\mathcal{T}(\bfs\lambda)\,q^{n-m}\big|\leq\\
&\le (q+1)\,q^{n-m-2}\mathcal{T}(\bfs\lambda)\Big(\big(\delta_{\bfs
L}(D_{\bfs L}-2)+2\big)q^{\frac{1}{2}} +14\,D_{\bfs L}^2
\delta_{\bfs L}^2\Big)+|\mathcal{A}_{\bfs \lambda}^{nsq}|,
\end{align*}
where $\delta_{\bfs L}:=i_1\cdots i_m$  and $D_{\bfs
L}:=\sum_{j=1}^m (i_j-1)$.
\end{theorem}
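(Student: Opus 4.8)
The plan is to combine the translation of $|\mathcal{A}_{\bfs\lambda}|$ into a point count on the variety $V$, carried out in the paragraphs above, with the estimate \eqref{eq: estimates: normal var CaMaPr} of \cite{CaMaPr13} for normal complete intersections, applied to $\mathrm{pcl}(V)$ and to its trace at infinity.

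First I would split $\mathcal{A}_{\bfs\lambda}$ into its square--free part $\mathcal{A}_{\bfs\lambda}^{sq}$ and its non--square--free part $\mathcal{A}_{\bfs\lambda}^{nsq}$. By \eqref{eq: estimates: A_lambda^sq} (which is Corollary~\ref{coro: fact patterns: systems pattern lambda} rephrased in terms of $V$) we have $|\mathcal{A}_{\bfs\lambda}^{sq}|=\mathcal{T}(\bfs\lambda)\,|V^{\neq}(\fq)|$, where $V^{\neq}(\fq)=V(\fq)\setminus V^{=}(\fq)$ and $V^{=}$ is the union of the diagonal slices of $V$ appearing in its definition above. Writing $|V^{\neq}(\fq)|=|V(\fq)|-|V^{=}(\fq)|$ and $|\mathcal{A}_{\bfs\lambda}|=|\mathcal{A}_{\bfs\lambda}^{sq}|+|\mathcal{A}_{\bfs\lambda}^{nsq}|$, the triangle inequality gives \eqref{eq: estimates: estimate A_lambda}, so it only remains to bound the two terms $\mathcal{T}(\bfs\lambda)\,\big||V(\fq)|-q^{n-m}\big|$ and $\big||\mathcal{A}_{\bfs\lambda}^{nsq}|-\mathcal{T}(\bfs\lambda)|V^{=}(\fq)|\big|$.

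For the first of these I would invoke the geometry established in Section~\ref{section: geometry of V}: by Theorem~\ref{theorem: geometry: proj closure of V is abs irred} the projective closure $\mathrm{pcl}(V)\subset\Pp^n$ is a normal ideal--theoretic complete intersection of dimension $n-m$ and degree $\delta_{\bfs L}=i_1\cdots i_m$, and by Theorem~\ref{theorem: geometry: V at infinity} its trace at infinity $\mathrm{pcl}(V)^{\infty}=\mathrm{pcl}(V)\cap\{Y_0=0\}\subset\Pp^{n-1}$ is likewise a normal complete intersection, of dimension $n-m-1$ and the same degree; in both cases the quantity $D$ entering \eqref{eq: estimates: normal var CaMaPr} equals $D_{\bfs L}=\sum_{j=1}^m(i_j-1)$. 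Applying \eqref{eq: estimates: normal var CaMaPr} to each of the two varieties, using $|V(\fq)|=|\mathrm{pcl}(V)(\fq)|-|\mathrm{pcl}(V)^{\infty}(\fq)|$ and $q^{n-m}=p_{n-m}-p_{n-m-1}$, and adding the resulting error bounds yields \eqref{eq: estimates: Vn todos}. For the second term I would use the trivial bound \eqref{eq: estimates: rough bound A_lambda^nsq}, i.e. $\big||\mathcal{A}_{\bfs\lambda}^{nsq}|-\mathcal{T}(\bfs\lambda)|V^{=}(\fq)|\big|\le|\mathcal{A}_{\bfs\lambda}^{nsq}|$. Substituting these two estimates into \eqref{eq: estimates: estimate A_lambda} gives precisely the claimed inequality.

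There is no serious obstacle in this assembly itself; all the weight sits in the inputs. The genuinely hard part is the content of Section~\ref{section: geometry of V} — in particular the fact (Corollary~\ref{coro: fact patterns: systems pattern lambda}) that $R_1\klk R_m$ become polynomials in the elementary symmetric functions after the change of coordinates $\bfs Y$, and the resulting codimension--$\ge 2$ control of the singular loci of $V$ and of $\mathrm{pcl}(V)$ (Theorem~\ref{theorem: geometry: dimension singular locus} and Lemma~\ref{lemma: geometry: dim singular locus V at infinity}), which is exactly what makes \eqref{eq: estimates: normal var CaMaPr} applicable — together with that external estimate. The one loose end, and the reason this statement is only a stepping stone toward Theorem~\ref{theorem: intro: main}, is the term $|\mathcal{A}_{\bfs\lambda}^{nsq}|$, bounded here only trivially: to reach the sharp estimates of the introduction one must instead control the number of $\fq$--rational points of the ``discriminant'' variety $V^{=}$ (equivalently, bound $|\mathcal{A}_{\bfs\lambda}^{nsq}|$) and show it contributes only at the level of the error term.
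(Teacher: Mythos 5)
Your proposal reproduces the paper's own argument: the decomposition \eqref{eq: estimates: estimate A_lambda} via the identity $|\mathcal{A}_{\bfs\lambda}^{sq}|=\mathcal{T}(\bfs\lambda)|V^{\neq}(\fq)|$, the application of \eqref{eq: estimates: normal var CaMaPr} to $\mathrm{pcl}(V)$ and $\mathrm{pcl}(V)^{\infty}$ (justified by Theorems \ref{theorem: geometry: V at infinity} and \ref{theorem: geometry: proj closure of V is abs irred}) to obtain \eqref{eq: estimates: Vn todos}, and the trivial bound \eqref{eq: estimates: rough bound A_lambda^nsq} for the non--square--free contribution. This is exactly how the theorem is assembled in the text, so the proposal is correct and matches the paper's route.
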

%
%
%
It remains to obtain an upper bound on the number
$|\mathcal{A}_{\bfs\lambda}^{nsq}|$ of polynomials in
$\mathcal{A}_{\bfs\lambda}$
which are not square-free. 
%
%
%
To this end, observe that a polynomial $f\in \mathcal{A}$ is not
square--free if and only if its discriminant is equal to zero. In
\cite{MaPePr13} we study the so--called {\em discriminant locus} of
$\mathcal{A}$, namely the set $\mathcal{A}^{nsq}$ formed by the
elements of $\mathcal{A}$ whose discriminant is equal to zero (see
also \cite{FrSm84} for further results on discriminant loci).
According to \cite[Theorem A.3]{MaPePr13}, the discriminant locus
$\mathcal{A}^{nsq}$ is the set of $\fq$--rational points of a
hypersurface of degree $n(n-1)$ of a suitable $(n-m)$--dimensional
affine space. As a consequence, by, e.g., \cite[Lemma 2.1]{CaMa06},
we have
\begin{equation}\label{eq: estimates: upper bound discr locus}
|\mathcal{A}_{\bfs\lambda}^{nsq}|\le |\mathcal{A}^{nsq}|\leq n(n-1)
\,q^{n-m-1}.
\end{equation}
Combining Theorem \ref{theorem: estimates: bound without discr
locus} (with a slightly simplified bound) and (\ref{eq: estimates:
upper bound discr locus}) we deduce the following result.
\begin{theorem}\label{theorem: estimates: bound with discr locus 1}
For $p>2$, $q > n$ and $3\le r\le n-m$, we have
\begin{equation}\label{eq: estimates: estimate 1}
\big||\mathcal{A}_{\bfs\lambda}|
-\mathcal{T}(\bfs\lambda)\,q^{n-m}\big|\leq
q^{n-m-1}\big(2\,\mathcal{T}(\bfs\lambda)\,D_{\bfs L}\delta_{\bfs
L}q^{\frac{1}{2}} +19\,\mathcal{T}(\bfs\lambda)\,D_{\bfs L}^2
\delta_{\bfs L}^2+n(n-1)\big).
\end{equation}
\end{theorem}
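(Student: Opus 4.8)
The plan is to deduce Theorem \ref{theorem: estimates: bound with discr locus 1} by combining Theorem \ref{theorem: estimates: bound without discr locus} with the bound \eqref{eq: estimates: upper bound discr locus} on the number $|\mathcal{A}_{\bfs\lambda}^{nsq}|$ of non--square--free elements, followed by a simplification of the numerical constants. First I would record the consequences of the hypotheses: since $m\ge 1$ and $3\le r\le n-m$, we get $n\ge m+3\ge 4$, so $q>n$ forces $q\ge 5$; in particular $q\ge 3$, and $(q+1)/q\le 2$.

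Next I would massage the main term of Theorem \ref{theorem: estimates: bound without discr locus}, namely $(q+1)\,q^{n-m-2}\mathcal{T}(\bfs\lambda)\big((\delta_{\bfs L}(D_{\bfs L}-2)+2)q^{1/2}+14\,D_{\bfs L}^2\delta_{\bfs L}^2\big)$. For the $q^{1/2}$--part I would use $\delta_{\bfs L}(D_{\bfs L}-2)+2\le \delta_{\bfs L}D_{\bfs L}$, which holds because $\delta_{\bfs L}=i_1\cdots i_m\ge 1$ (the $i_j$ are positive integers), together with $(q+1)q^{n-m-2}q^{1/2}=(q+1)q^{n-m-3/2}\le 2\,q^{n-m-1/2}$, obtaining an upper bound $2\,\mathcal{T}(\bfs\lambda)\,D_{\bfs L}\delta_{\bfs L}\,q^{n-m-1/2}$. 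For the remaining part I would use $14(q+1)\le 19q$, which is exactly the inequality $14\le 5q$ and hence holds since $q\ge 3$, to get $14(q+1)q^{n-m-2}D_{\bfs L}^2\delta_{\bfs L}^2\le 19\,D_{\bfs L}^2\delta_{\bfs L}^2\,q^{n-m-1}$. Adding these and factoring out $q^{n-m-1}\mathcal{T}(\bfs\lambda)$ bounds the main term by $q^{n-m-1}\big(2\,\mathcal{T}(\bfs\lambda)\,D_{\bfs L}\delta_{\bfs L}\,q^{1/2}+19\,\mathcal{T}(\bfs\lambda)\,D_{\bfs L}^2\delta_{\bfs L}^2\big)$.

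Finally I would add the term $|\mathcal{A}_{\bfs\lambda}^{nsq}|$ coming from Theorem \ref{theorem: estimates: bound without discr locus}, bounded by $n(n-1)\,q^{n-m-1}$ via \eqref{eq: estimates: upper bound discr locus}; this yields precisely the claimed right--hand side. There is essentially no obstacle: all the substantive input, namely the identification of $|\mathcal{A}_{\bfs\lambda}^{sq}|$ with $\mathcal{T}(\bfs\lambda)\,|V^{\neq}(\fq)|$, the normality and absolute irreducibility of $\mathrm{pcl}(V)\subset\Pp^n$ and of $\mathrm{pcl}(V)^{\infty}\subset\Pp^{n-1}$ with degree $\delta_{\bfs L}=i_1\cdots i_m$ (Theorems \ref{theorem: geometry: V at infinity} and \ref{theorem: geometry: proj closure of V is abs irred}), the point count \eqref{eq: estimates: normal var CaMaPr} from \cite{CaMaPr13}, and the discriminant--locus estimate from \cite{MaPePr13}, is already in place. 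The only care needed is verifying the elementary inequalities $(q+1)\le 2q$, $14(q+1)\le 19q$ (both guaranteed by $q\ge 3$) and $\delta_{\bfs L}(D_{\bfs L}-2)+2\le\delta_{\bfs L}D_{\bfs L}$, so that the advertised constants $2$ and $19$ are admissible; this is pure bookkeeping.
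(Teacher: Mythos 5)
Your proposal is correct and follows exactly the route the paper indicates: the paper derives Theorem \ref{theorem: estimates: bound with discr locus 1} by "Combining Theorem \ref{theorem: estimates: bound without discr locus} (with a slightly simplified bound) and \eqref{eq: estimates: upper bound discr locus}," and your argument supplies precisely that simplification via the elementary inequalities $\delta_{\bfs L}(D_{\bfs L}-2)+2\le\delta_{\bfs L}D_{\bfs L}$, $q+1\le 2q$, and $14(q+1)\le 19q$ (all valid in the stated range of $q$). The bookkeeping checks out and reproduces the stated constants.
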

%
%
\subsection{Factorization patterns of polynomials with prescribed
coefficients}
In this section we briefly indicate how Theorem \ref{theorem:
estimates: bound with discr locus 1} is applied when $\mathcal{A}$
consists of the polynomials of $\mathcal{P}$ with certain prescribed
coefficients. Given $0< i_1<i_2<\cdots< i_m\le n$ and $\bfs
\alpha:=(\alpha_{i_1}\klk \alpha_{i_m})\in\fq^m$, set
$\mathcal{I}:=\{i_1\klk i_m\}$ and
\begin{equation}\label{eq: estimates: family A^m}
\mathcal{A}^m:= \mathcal{A}^m(\mathcal{I},\bfs
\alpha):=\left\{T^n+a_1T^{n-1}\plp a_n\in\fq[T]:
a_{i_j}=\alpha_{i_j}\ (1\le j\le m)\right\}.
\end{equation}
For a given factorization pattern $\bfs\lambda$, let $G\in\fq[\bfs
X,T]$ be the polynomial of \eqref{eq: fact patterns: pol G}.
According to Lemma \ref{lemma: fact patterns: sym pols and pattern
lambda}, an element $f\in \mathcal{A}^m$ has factorization pattern
$\bfs\lambda$ if and only if there exists $\bfs x$ of type
$\bfs\lambda$ such that
\begin{equation}\label{eq: estimates: polynomial syst for presc coeff}
(-1)^{i_j}\Pi_{i_j}(\bfs Y(\bfs x))=\alpha_{i_j}\quad(1\le j\le m).
\end{equation}
Therefore, applying Theorem \ref{theorem: estimates: bound with
discr locus 1} with $\delta_{\mathcal{I}}:=i_1\cdots i_m$ and
$D_{\mathcal{I}}:=\sum_{j=1}^m(i_j-1)$, we obtain the following
result. 
\begin{corollary} For $p>2$, $q > n$ and $i_m\le n-3$, we
have
$$
\big||\mathcal{A}^m_{\bfs
\lambda}|-\mathcal{T}(\bfs\lambda)\,q^{n-m}\big|\le
q^{n-m-1}\big(2\,\mathcal{T}(\bfs\lambda)\,D_{\mathcal{I}}\,
\delta_{\mathcal{I}}\,q^{\frac{1}{2}}
+19\,\mathcal{T}(\bfs\lambda)\,D_{\mathcal{I}}^2\,
\delta_{\mathcal{I}}^2+n(n-1)\big). $$
\end{corollary}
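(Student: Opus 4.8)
The plan is to realize $\mathcal{A}^m$ as one of the linear families $\mathcal{A}(\bfs L,\bfs\alpha)$ to which Theorem \ref{theorem: estimates: bound with discr locus 1} applies, and then to identify the relevant invariants. Writing a generic monic polynomial of degree $n$ as $T^n+a_{n-1}T^{n-1}\plp a_0$, with $a_k$ the coefficient of $T^k$, the prescription $a_{i_j}=\alpha_{i_j}$ appearing in \eqref{eq: estimates: family A^m} (where, in that notation, $a_{i_j}$ denotes the coefficient of $T^{n-i_j}$) amounts to prescribing the coefficient $a_{n-i_j}$. Hence $\mathcal{A}^m=\mathcal{A}(\bfs L,\bfs\alpha)$ with $L_j:=A_{n-i_j}$ and $\alpha_j:=-\alpha_{i_j}$ for $1\le j\le m$; the forms $L_1\klk L_m$ are clearly linearly independent, and they lie in $\fq[A_r\klk A_{n-1}]$ exactly when $r\le n-i_m$. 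I would take $r:=n-i_m$: the hypothesis $i_m\le n-3$ forces $r\ge 3$, while $i_1<\cdots<i_m$ being $m$ distinct positive integers gives $i_m\ge m$, so $r=n-i_m\le n-m$. Thus $(\bfs L,\bfs\alpha)$ meets the hypotheses of Theorem \ref{theorem: estimates: bound with discr locus 1}.

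The second step is to compute the discrete invariants $\delta_{\bfs L}$ and $D_{\bfs L}$ for this family. By Lemma \ref{lemma: fact patterns: sym pols and pattern lambda} (equivalently, by \eqref{eq: estimates: polynomial syst for presc coeff}), an $f\in\mathcal{A}^m$ has factorization pattern $\bfs\lambda$ if and only if there is an $\bfs x\in\fq^n$ of type $\bfs\lambda$ with $(-1)^{i_j}\Pi_{i_j}(\bfs Y(\bfs x))=\alpha_{i_j}$ for $1\le j\le m$. Hence the polynomials of \eqref{eq: geometry: def R_j} associated with $\mathcal{A}^m$ are $R_j=(-1)^{i_j}\Pi_{i_j}(\bfs Y(\bfs X))-\alpha_{i_j}$, so that $S_j=(-1)^{i_j}Z_{i_j}-\alpha_{i_j}$, and the Jacobian matrix $(\partial\bfs S/\partial\bfs Z)$ has exactly one nonzero entry $(-1)^{i_j}$ in its $j$th row, located in column $i_j$. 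In particular $(\partial\bfs S/\partial\bfs Z)$ has full rank $m$ and its pivots occupy the columns $i_1<\cdots<i_m$, so that $\delta_{\bfs L}=i_1\cdots i_m=\delta_{\mathcal{I}}$ and $D_{\bfs L}=\sum_{j=1}^m(i_j-1)=D_{\mathcal{I}}$.

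Finally I would simply feed this family into Theorem \ref{theorem: estimates: bound with discr locus 1}: since $p>2$, $q>n$ and $3\le r\le n-m$, estimate \eqref{eq: estimates: estimate 1} gives
$$\big||\mathcal{A}^m_{\bfs\lambda}|-\mathcal{T}(\bfs\lambda)\,q^{n-m}\big|\le q^{n-m-1}\big(2\,\mathcal{T}(\bfs\lambda)\,D_{\bfs L}\,\delta_{\bfs L}\,q^{\frac{1}{2}}+19\,\mathcal{T}(\bfs\lambda)\,D_{\bfs L}^2\,\delta_{\bfs L}^2+n(n-1)\big),$$
and substituting $\delta_{\bfs L}=\delta_{\mathcal{I}}$, $D_{\bfs L}=D_{\mathcal{I}}$ yields the asserted bound. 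None of these steps is genuinely difficult; the only places that demand a little care are the bookkeeping between the two conventions for indexing the coefficients — so that prescribing $a_{i_j}$ corresponds to a pivot of $(\partial\bfs S/\partial\bfs Z)$ in column $i_j$, which is what forces $i_m\le n-r$ — and the verification that $r$ can be chosen inside the admissible window $3\le r\le n-m$, which is exactly where the hypotheses $i_m\le n-3$ and $i_m\ge m$ are used.
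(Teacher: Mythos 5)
Your proof is correct and follows the paper's own (terse) argument: the paper simply observes via Lemma \ref{lemma: fact patterns: sym pols and pattern lambda} and \eqref{eq: estimates: polynomial syst for presc coeff} that $\mathcal{A}^m$ falls under Theorem \ref{theorem: estimates: bound with discr locus 1} with $\delta_{\bfs L}=\delta_{\mathcal{I}}$ and $D_{\bfs L}=D_{\mathcal{I}}$, and you have supplied the bookkeeping (choice of $r=n-i_m$, verification of $3\le r\le n-m$, identification of $S_j=(-1)^{i_j}Z_{i_j}+\alpha_j$ and hence of the pivot positions) that the paper leaves implicit.
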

%
%
\section{The number of polynomials in $\mathcal{A}_{\bfs \lambda}$
in the sparse case}
In \cite{CaMaPr12}, \cite{CeMaPePr14} and \cite{MaPePr13} a
methodology to deal with combinatorial problems over finite fields
is developed. It is based on the fact that many combinatorial
problems can be described by means of symmetric polynomials, and
varieties defined by symmetric polynomials have particular features
that can be exploited in order to obtain ``good'' estimates on their
number of $\fq$--rational points. A new avatar of these assertions
can be seen in Sections \ref{section: geometry of V} and
\ref{section: estimates} above.

In particular, similar techniques as in \cite{CeMaPePr14} can be
applied in order to obtain a further estimate on the number of
elements in $\mathcal{A}_{\bfs \lambda}$, which holds when the
linear forms $L_1\klk L_m$ are ``sparse''. More precisely, if
$L_1\klk L_m$ are linearly independent elements of $\fq[A_r\klk
A_{n-1}]$ with $r\ge m+2$, then we will able to show that
$$\big||{\mathcal A_{\bfs \lambda}}|-\mathcal{T}(\bfs \lambda)
\,q^{n-m}\big|={\mathcal O}(q^{n-m-1}),$$
improving thus the ${\mathcal O}(q^{n-m-\frac{1}{2}})$ estimate  of
the left--hand side of Theorem \ref{theorem: estimates: bound with
discr locus 1}. We remark that this estimate is valid without
restrictions on the characteristic of $\fq$.

As the arguments differ slightly from the ones of \cite{CeMaPePr14}
we shall merely sketch the approach. Roughly speaking, the results
of \cite{CeMaPePr14} allows us to deduce that the singular locus of
the variety $V$ defined by the polynomials of \eqref{eq: geometry:
def R_j} has codimension at least $3$. Combining this with results
about the geometry of $V$ of Section \ref{section: geometry of V} we
conclude that its projective closure $\mathrm {pcl}(V)$ is regular
in codimension 2. Our estimate then follows from estimates on the
number of $\fq$--rational points of complete intersections which are
regular in codimension 2 due to \cite{CaMaPr13}.
%
%
\subsection{The geometry of the set of zeros of $R_1\klk R_m$ for large $r$}
Let be given $r$ with $m+2 \leq r \leq n-m$, let $A_{r}\klk A_{n-1}$
be indeterminates over $\cfq$ and let $L_1,\dots,L_m$ be linear
forms of $\fq[A_{r}\klk A_{n-1}]$ which are linearly independent.
For $\bfs\alpha:=(\alpha_1\klk\alpha_m)\in\fq^m$, we set
$\bfs{L}:=(L_1,\ldots,L_m)$ and consider as before the linear
variety $\mathcal{A}:=\mathcal{A}(\bfs{L},\bfs\alpha)$ defined as
$$\mathcal{A}:=\left\{T^n+a_{n-1}T^{n-1}\plp a_0\in\fq[T]:
\bfs L(a_r\klk a_{n-1})+\bfs \alpha=\bfs 0\right\}.$$

Let $R_1\klk R_m$ be the polynomials of $\fq[\bfs X]:=\fq[X_1\klk
X_n]$ defined as
\begin{equation}\label{eq: prescribed coeff: Rj}
R_j:=S_j\big(\Pi_1,\ldots,\Pi_{n-r}\big)\quad (1\le j\le m)
\end{equation}
where $\Pi_1 \klk \Pi_{n-r}$ are the first $n-r$ elementary
symmetric polynomials of \linebreak $\fq[Y_1 \klk Y_n]$, $\bfs
Y:=(Y_1\klk Y_n)$ is the vector of linear forms of $\cfq[\bfs X]$
defined as in (\ref{eq: fact patterns: def linear forms Y}) and the
linear polynomials $S_1\klk S_m\in \cfq[Z_1\klk Z_{n-r}]$ are
defined as in (\ref{eq: geometry: def R_j}). According to Corollary
\ref{coro: fact patterns: systems pattern lambda}, we can express
the number of elements of $\mathcal{A}_{\bfs \lambda}$ in terms of
the number of $\fq$--rational points of the variety $V\subset \A^n$
defined by $R_1\klk R_m$.

As in the previous sections, we assume that the Jacobian matrix
$(\partial \bfs L/\partial \bfs A)$ of the vector of linear forms
$\bfs L$ with respect to $\bfs A:=(A_{n-1}\klk A_r)$ is lower
triangular in row--echelon form. Hence, there exist $1\leq i_1< i_2<
\dots< i_m \leq n-r $ such that $\deg R_j=i_j$ $(1 \leq j \leq m)$.
As before, the numbers
\begin{equation}\label{eq: prescribed coeff: def deltaL}
\delta_{\bfs L}:=i_1\cdots i_m \quad \hbox{ and } \quad D_{\bfs
L}:=\sum_{j=1}^m (i_j-1)
\end{equation}
will play a central role in our estimates.

Next we show that the projective closure $\mathrm{pcl}(V)$ of $V$
and the set $\mathrm{pcl}(V)^{\infty}$ of points of
$\mathrm{pcl}(V)$ at infinity are complete intersections which are
regular in codimension two. For this purpose, we rely on results on
the geometry of complete intersections defined by symmetric
polynomials of \cite[Section 3]{CeMaPePr14}.
%
\begin{theorem}\label{theorem: prescribed coeff: complete int and sing locus}
Let be given positive integers $m$, $r$ and $n$ with $q>n$ and
$m+2\leq r\leq n-m$. 
Let $R_1\klk R_m$ be the polynomials of (\ref{eq: prescribed coeff:
Rj}) and $V\subset\A^n$ the affine variety defined by $R_1\klk R_m$.
Then
 \begin{enumerate}[(\,1)]
\item $V$ is an ideal--theoretic complete intersection
of dimension $n-m$ and $\deg(V) \leq \prod_{i=1}^m \deg(R_i)$.
\item  The set of points $\bfs y\in \A^n$ for which
$\left(\partial {\bfs R}/\partial{\bfs Y}\right)({\bfs y})$ has not
full rank, has dimension at most $n-r-1$. In particular, the
singular locus of $V$ has dimension at most $n-r-1$.
\end{enumerate}
\end{theorem}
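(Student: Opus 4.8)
The plan is to mirror the structure of Section~\ref{section: geometry of V}, now exploiting that we are in the sparse regime $r\ge m+2$, and to import the sharper geometric results on symmetric complete intersections from \cite[Section~3]{CeMaPePr14}. For part~(1), the argument is essentially unchanged from the non--sparse case: since the Jacobian matrix $(\partial \bfs{S}/\partial \bfs{Z})$ has full rank $m$ and the $\bfs\Pi$--map is a dominant finite morphism, the preimages $V(R_1\klk R_j)=(\bfs\Pi^{\bfs n})^{-1}(W_j)$ are equidimensional of dimension $n-j$, so $R_1\klk R_m$ form a regular sequence and $V$ is a set--theoretic complete intersection of dimension $n-m$; the degree bound is the B\'ezout inequality applied to $\deg R_j=i_j$. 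To promote this to an \emph{ideal}--theoretic complete intersection one needs the ideal $(R_1\klk R_m)$ to be radical, which by \cite[Theorem 18.15]{Eisenbud95} follows once part~(2) is established, since part~(2) says the non--full--rank locus has codimension at least $r-1+1\ge m+1$ in $V$ (using $r\ge m+2$, so $n-r-1\le n-m-3$, which indeed has codimension $\ge 3$ in $V$).

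The heart of the theorem is part~(2): bounding by $n-r-1$ the dimension of the set $\Sigma_0$ of $\bfs y\in\A^n$ at which $(\partial\bfs R/\partial\bfs Y)(\bfs y)$ drops rank. First I would run the same reduction as in Lemma~\ref{lemma: geometry: coordinates sing point}: writing $\bfs R=\bfs S\circ\bfs\Pi$ and using the chain rule together with the full rank of $(\partial\bfs S/\partial\bfs Z)(\bfs\Pi(\bfs y))$, a rank drop of $(\partial\bfs R/\partial\bfs Y)(\bfs y)$ forces all maximal minors of $(\partial\bfs\Pi/\partial\bfs Y)(\bfs y)$ to vanish, where $\bfs\Pi=(\Pi_1\klk\Pi_{n-r})$. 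At this point one invokes the structural results of \cite[Section~3]{CeMaPePr14} on the locus where the Jacobian of the first $n-r$ elementary symmetric polynomials is rank--deficient: such a locus is controlled by the number of coincidences among the coordinates $y_1\klk y_n$, and because we are truncating at $n-r$ symmetric functions rather than $n-m$, the coincidence pattern is forced to be more degenerate, which is exactly what produces the sharper bound $n-r-1$ in place of $n-m-2$. Concretely I expect the estimate to come from stratifying $\A^n$ by the partition type of the coordinates of $\bfs y$ and counting the dimension of each stratum on which the relevant minors vanish, the extremal stratum having dimension $n-r-1$.

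The main obstacle will be supplying the precise dimension count in the previous step while honestly citing \cite{CeMaPePr14} --- i.e.\ making sure the hypotheses of the cited lemmas (notably that $q>n$, and the combinatorial constraints relating $r$, $m$ and the number of distinct coordinates) are met, and checking that the bound is stated for the \emph{affine cone} so that it transfers correctly. Once $\dim\Sigma_0\le n-r-1$ is in hand, the ``In particular'' clause is immediate: for a singular point $\bfs y$ of the complete intersection $V$ one has $\dim\mathcal{T}_{\bfs y}V>n-m$, which forces $\mathrm{rank}(\partial\bfs R/\partial\bfs Y)(\bfs y)<m$, so $\mathrm{Sing}(V)\subset\Sigma_0$ and thus $\dim\mathrm{Sing}(V)\le n-r-1$. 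Finally, feeding $\dim\Sigma_0\le n-r-1\le n-m-3$ back into \cite[Theorem 18.15]{Eisenbud95} closes the radicality gap left open in part~(1), completing the proof. A minor bookkeeping point to watch is the edge case $r=n-m$, where $n-r-1=m-1$ and one must confirm the codimension--$3$ claim still holds; this is where the strict inequality $r\ge m+2$ is used.
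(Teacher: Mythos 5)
Your proposal is correct and follows essentially the same route as the paper: both reduce everything to the dimension bound for the rank--drop locus of the Jacobian of the first $n-r$ elementary symmetric polynomials, imported from \cite[Section 3]{CeMaPePr14}. The paper's own proof is simply more compressed --- it verifies that $S_1\klk S_m$ satisfy hypotheses (H1) and (H2) of \cite[Section~3.2]{CeMaPePr14} and then cites \cite[Theorem 3.2]{CeMaPePr14} for assertion~(2) and \cite[Corollary 3.3]{CeMaPePr14} for assertion~(1) outright, rather than re--establishing the regular sequence via the finite morphism $\bfs\Pi^{\bfs n}$ and radicality via \cite[Theorem 18.15]{Eisenbud95} as you do.
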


\begin{proof}
As $S_1\klk S_m\in\fq[Z_1\klk Z_{n-r}]$ are linear polynomials which
are linearly independent, the hypotheses (H1) and (H2) of
\cite[Section 3.2]{CeMaPePr14} are satisfied. Then \cite[Theorem
3.2]{CeMaPePr14} shows the second assertion.  
%
Finally, from \cite[Corollary 3.3]{CeMaPePr14} we readily deduce the
first assertion of the theorem.
\end{proof}

If the polynomials $R_1\klk R_m$ of (\ref{eq: prescribed coeff: Rj})
are homogeneous (for example, if $R_j=\Pi_{i_j}$ for $1\le j\le m$),
we may somewhat strengthen the conclusions of Theorem \ref{theorem:
prescribed coeff: complete int and sing locus}, as the next result
asserts.
\begin{corollary}\label{coro: prescribed coeff: complete int and sing locus}
Let notations and assumptions be as above. Suppose further that $R_1
\klk R_m$ are homogeneous. Then
\begin{enumerate}[(\,1)]
\item The projective variety $\mathcal{V}\subset \Pp^{n-1}$
defined by $R_1 \klk R_m$ is an ideal--theoretic
complete intersection of dimension $n-m-1$ and $\deg(\mathcal{V})=
\prod_{i=1}^m \deg(R_i)$.
\item The set of points $\bfs y\in \Pp^{n-1}$ for which
$\left(\partial {\bfs R}/\partial{\bfs Y}\right)(\bfs y)$ has not
full rank, has dimension at most $n-r-2$. In particular, the
singular locus of $\mathcal{V}$ has dimension at most $n-r-2$.
\end{enumerate}
\end{corollary}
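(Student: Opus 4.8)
The plan is to derive the corollary from Theorem~\ref{theorem: prescribed coeff: complete int and sing locus} by exploiting that, when $R_1\klk R_m$ are homogeneous, the affine variety $V=V(R_1\klk R_m)\subset\A^n$ appearing there is a cone; passing to its projectivization $\mathcal{V}=V(R_1\klk R_m)\subset\Pp^{n-1}$ then lowers every relevant dimension by exactly one. First I would record what Theorem~\ref{theorem: prescribed coeff: complete int and sing locus} already provides: $R_1\klk R_m$ form a regular sequence of $\fq[\bfs Y]$ generating a radical ideal, $V$ is equidimensional of dimension $n-m$, and the locus $\Sigma_0$ of points $\bfs y\in\A^n$ at which $(\partial\bfs R/\partial\bfs Y)(\bfs y)$ has rank $<m$ has dimension at most $n-r-1$. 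Note also that $n-m\ge m+2\ge 3>0$, so the origin is not an irreducible component of $V$.

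\textbf{Part (1).} Since $(R_1\klk R_m)$ is a homogeneous radical ideal whose affine zero locus $V$ is equidimensional of positive dimension, the origin is not a component of $V$, and therefore $(R_1\klk R_m)$ is saturated: it equals the homogeneous ideal $I(\mathcal{V})$ of $\mathcal{V}=V(R_1\klk R_m)\subset\Pp^{n-1}$. In particular $\dim\mathcal{V}=\dim V-1=n-m-1$. As $R_1\klk R_m$ form a regular sequence, $\mathcal{V}$ is an ideal--theoretic complete intersection of $\Pp^{n-1}$, and the B\'ezout theorem~\eqref{eq: geometry: Bezout eq} yields $\deg\mathcal{V}=\prod_{i=1}^m\deg R_i$.

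\textbf{Part (2).} The entries of the Jacobian matrix $(\partial R_j/\partial Y_k)$ are homogeneous (of degree $\deg R_j-1$), so within each maximal minor all monomials share the common degree $\sum_{j}\deg R_j-m$; hence every maximal minor of $(\partial\bfs R/\partial\bfs Y)$ is homogeneous and $\Sigma_0$ is a cone in $\A^n$. Since $\dim\Sigma_0\le n-r-1$, its projectivization --- the set of $\bfs y\in\Pp^{n-1}$ at which $(\partial\bfs R/\partial\bfs Y)(\bfs y)$ has rank $<m$ --- has dimension at most $n-r-2$ (if $\Sigma_0$ is empty or equals $\{\bfs 0\}$ this set is empty and the bound is trivial). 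Finally, since $\mathcal{V}$ is an ideal--theoretic complete intersection cut out by $R_1\klk R_m$, the Jacobian criterion identifies $\mathrm{Sing}(\mathcal{V})$ with the set of $\bfs y\in\mathcal{V}$ at which $(\partial\bfs R/\partial\bfs Y)$ has rank $<m$; this set is contained in the cone just projectivized, whence $\dim\mathrm{Sing}(\mathcal{V})\le n-r-2$.

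The one step deserving care --- and the only place where something could go wrong --- is the identification $(R_1\klk R_m)=I(\mathcal{V})$ in Part~(1): one must know that $V$ has no component (isolated or embedded) supported at the origin, which is precisely guaranteed by its equidimensionality of dimension $n-m\ge 3$. Granting this, the passage from $\A^n$ to $\Pp^{n-1}$ is the routine dictionary between a cone and its projectivization, and dimension, degree and the Jacobian criterion transfer verbatim.
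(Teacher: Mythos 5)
Your proof is correct and follows the same overall route as the paper: both parts are deduced from Theorem~\ref{theorem: prescribed coeff: complete int and sing locus} by passing from the affine cone $V\subset\A^n$ to its projectivization $\mathcal{V}\subset\Pp^{n-1}$, with part~(1) using the regular-sequence/radical-ideal conclusion plus B\'ezout and part~(2) using the dimension bound on the Jacobian degeneracy locus. The paper states these deductions tersely (``readily follows''); you merely fill in the details that the paper leaves implicit, namely that the radical ideal is saturated because $V$ is equidimensional of positive dimension (so the origin is not a component), and that the degeneracy locus is a cone because the maximal minors of the Jacobian are homogeneous of common degree $\sum_j(\deg R_j-1)$, so projectivizing drops its dimension by one.
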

\begin{proof}
The second assertion readily follows from that of Theorem
\ref{theorem: prescribed coeff: complete int and sing locus}. On the
other hand, the first assertion of Theorem \ref{theorem: prescribed
coeff: complete int and sing locus} implies that $R_1 \klk R_m$ form
a regular sequence and define a radical ideal. Therefore,
$\mathcal{V}$ is an ideal--theoretic complete intersection of
dimension $n-m-1$.
As a consequence, the B\'ezout theorem \eqref{eq: geometry: Bezout
eq} shows that $\deg(\mathcal{V}) = \prod_{i=1}^m \deg(R_i)$.
%
\end{proof}

As before, we consider the projective closure
$\mathrm{pcl}(V)\subset\mathbb{P}^n$ of the affine variety
$V\subset\A^n$ defined by the polynomials $R_1\klk R_m$ of (\ref{eq:
prescribed coeff: Rj}) with respect to the embedding of $\A^n$ into
$\Pp^n$ defined as $(y_1,\dots,y_n)\mapsto(1:y_1:\dots:y_n)$. We
also denote by $\mathrm{pcl}(V)^\infty$ the set of points of
$\mathrm{pcl}(V)$ at infinity, namely
$\mathrm{pcl}(V)^\infty:=\mathrm{pcl}(V) \cap\{Y_0=0\}$. In
connection with the latter, we observe that, if $R_j^h$ is the
homogenization of $R_j$ defined as in (\ref{eq: geometry:
homogenizacion de R_j}), the polynomials $R_j^h(0,Y_1\klk
Y_n)=\Pi_{i_j}$ $(1\le j\le m)$ satisfy the hypotheses of Corollary
\ref{coro: prescribed coeff: complete int and sing locus}. The next
theorem summarizes all the properties of $\mathrm{pcl}(V)$ and
$\mathrm{pcl}(V)^\infty$ which are relevant for our purposes.
\begin{theorem} \label{theorem: prescribed coeff: geometry}
Let $n$, $m$ and  $r$ be positive integers with $q>n$ and $m+2 \leq
r \leq n-m$. The projective varieties $\mathrm{pcl}(V)\subset\Pp^n$
and $\mathrm{pcl}(V)^{\infty}:=\mathrm{pcl}(V)\cap\{Y_0=0\}\subset
\Pp^{n-1}$  are ideal--theoretic complete intersections defined over
$\fq$, of dimension $n-m$ and $n-m-1$ respectively, both of degree
$\delta_{\boldsymbol{L}}$, having singular locus of dimension at
most $n-r-1$ and $n-r-2$ respectively.
\end{theorem}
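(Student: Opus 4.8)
The plan is to assemble Theorem \ref{theorem: prescribed coeff: geometry} directly from the affine results already in hand, namely Theorem \ref{theorem: prescribed coeff: complete int and sing locus} and Corollary \ref{coro: prescribed coeff: complete int and sing locus}, following exactly the template used earlier for the case $r\ge 3$ (Theorems \ref{theorem: geometry: V at infinity} and \ref{theorem: geometry: proj closure of V is abs irred}), but now carrying the sharper codimension estimates. First I would recall that, by \eqref{eq: geometry: homogenizacion de R_j}, the homogenization $R_j^h\in\fq[Y_0,\dots,Y_n]$ satisfies $R_j^h(0,Y_1\klk Y_n)=\Pi_{i_j}$, and that $\Pi_{i_1}\klk\Pi_{i_m}$ are homogeneous and therefore fall under Corollary \ref{coro: prescribed coeff: complete int and sing locus}. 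That corollary gives that $\mathcal V:=V(\Pi_{i_1}\klk\Pi_{i_m})\subset\Pp^{n-1}$ is an ideal--theoretic complete intersection of dimension $n-m-1$ and degree $\delta_{\bfs L}=i_1\cdots i_m$, with singular locus of dimension at most $n-r-2$.

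Next I would handle $\mathrm{pcl}(V)$ itself. As in the proof of Theorem \ref{theorem: geometry: proj closure of V is abs irred}, I would note the inclusions $V(\bfs R^h)\cap\{Y_0\ne 0\}\subset V(\bfs R)=V$ and $V(\bfs R^h)\cap\{Y_0=0\}\subset V(\Pi_{i_1}\klk\Pi_{i_m})=\mathcal V$; since by Theorem \ref{theorem: prescribed coeff: complete int and sing locus}(1) the affine variety $V$ is equidimensional of dimension $n-m$ and $\mathcal V$ is equidimensional of dimension $n-m-1$, the projective variety $V(\bfs R^h)\subset\Pp^n$ has dimension at most $n-m$; being cut out by $m$ equations it is a set--theoretic complete intersection of dimension $n-m$, hence equidimensional, hence has no component at infinity, hence equals the projective closure of its affine part, i.e. $\mathrm{pcl}(V)=V(\bfs R^h)$. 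Then $\mathrm{pcl}(V)^\infty=\mathcal V$ is absolutely irreducible of dimension $n-m-1$ (or at any rate equidimensional of that dimension), and $\deg\mathrm{pcl}(V)=\deg V$, which by Theorem \ref{theorem: prescribed coeff: complete int and sing locus}(1) and B\'ezout \eqref{eq: geometry: Bezout eq} is $\delta_{\bfs L}$. Radicality of $(R_1^h\klk R_m^h)$ follows from \cite[Theorem 18.15]{Eisenbud95} once the singular locus is shown small enough, giving the ideal--theoretic conclusion.

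The remaining point is the dimension of $\mathrm{Sing}(\mathrm{pcl}(V))$. I would split it into the affine part $\{Y_0\ne 0\}$ and the part at infinity. On $\{Y_0\ne 0\}$ the singular locus of $\mathrm{pcl}(V)$ is contained in $\mathrm{Sing}(V)$, which by Theorem \ref{theorem: prescribed coeff: complete int and sing locus}(2) has dimension at most $n-r-1$. At infinity, arguing as in Lemma \ref{lemma: geometry: dim singular locus V at infinity} but invoking Corollary \ref{coro: prescribed coeff: complete int and sing locus}(2) instead, any point $\bfs y=(0:y_1:\dots:y_n)$ of $\mathrm{Sing}(\mathrm{pcl}(V))$ at infinity has $\Pi_{i_j}(y_1\klk y_n)=0$ for $1\le j\le m$ and $\mathrm{rank}(\partial\bfs\Pi_{\mathcal I}/\partial\bfs Y)(\bfs y)<m$, so it lies in a variety of dimension at most $n-r-2$. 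Combining, $\mathrm{Sing}(\mathrm{pcl}(V))$ has dimension at most $n-r-1$, and $\mathrm{Sing}(\mathrm{pcl}(V)^\infty)$ has dimension at most $n-r-2$, which is exactly the assertion. The main obstacle, if any, is purely bookkeeping: making sure the hypotheses (H1), (H2) feeding Theorem \ref{theorem: prescribed coeff: complete int and sing locus} are genuinely met by the homogeneous system $\Pi_{i_1}\klk\Pi_{i_m}$ at infinity (they are, since these are elementary symmetric polynomials and the corresponding $S_j=Z_{i_j}$ are linearly independent linear forms), and tracking that $m+2\le r$ is what forces $n-r-1\le n-m-3$, so that $\mathrm{pcl}(V)$ is regular in codimension $2$ as needed for the estimate in the next subsection.
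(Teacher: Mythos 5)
Your argument is correct and takes essentially the same route as the paper's own sketch, which explicitly says to follow the proofs of Theorems \ref{theorem: geometry: V at infinity} and \ref{theorem: geometry: proj closure of V is abs irred} with Theorem \ref{theorem: prescribed coeff: complete int and sing locus} and Corollary \ref{coro: prescribed coeff: complete int and sing locus} substituted for Lemma \ref{lemma: geometry: V is complete inters}, Theorem \ref{theorem: geometry: dimension singular locus} and Lemma \ref{lemma: geometry: dim singular locus V at infinity}. You have carried out precisely those substitutions, and the supporting bookkeeping (verifying that $\Pi_{i_1},\dots,\Pi_{i_m}$ satisfy the needed hypotheses via $S_j=Z_{i_j}$, and that $m+2\le r$ yields the codimension bounds) is right.
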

\begin{proof}[Sketch of the proof]
The proof of the assertions concerning $\mathrm{pcl}(V)$ follows the
lines of that of Theorem \ref{theorem: geometry: proj closure of V
is abs irred}, applying Theorem \ref{theorem: prescribed coeff:
complete int and sing locus} and Corollary \ref{coro: prescribed
coeff: complete int and sing locus} instead of Theorem \ref{theorem:
geometry: dimension singular locus} and Lemma \ref{lemma: geometry:
dim singular locus V at infinity}. On the other hand, the assertions
about $\mathrm{pcl}(V)^\infty$ are shown following the proof of
Theorem \ref{theorem: geometry: V at infinity}, using Corollary
\ref{coro: prescribed coeff: complete int and sing locus} instead of
Lemma \ref{lemma: geometry: V is complete inters}, Theorem
\ref{theorem: geometry: dimension singular locus} and Lemma
\ref{lemma: geometry: dim singular locus V at infinity}.
\end{proof}
%
%
\subsection{The estimate of $|\mathcal A_{\bfs \lambda}|$ for large $r$}
In what follows, we shall use an estimate on the number of
$\fq$--rational points of a projective singular complete
intersection defined over $\fq$ due to \cite{CaMaPr13} (see
\cite{GhLa02a} for similar estimates). More precisely, if
$W\subset\Pp^n$ is an $\fq$--definable ideal--theoretic complete
intersection of dimension $n-l$, degree $\delta\ge 2$, multidegree
$(d_1\klk d_l)$ and singular locus of dimension at most $n-l-3$,
then the following estimate holds (see \cite[Corollary
8.4]{CaMaPr13}):
\begin{equation}\label{eq: estimates: rat points CML}
\big||W(\fq)|-p_{n-l}\big|\leq 14 D^3\delta^2q^{n-l-1},
\end{equation}
where $D:=\sum_{i=1}^l(d_i-1)$.

As before, let be given positive integers $m$, $n$ and $r$ with
$q>n$ and $m+2\le r\le n-m$. Let $V\subset\A^n$ be the variety
defined by the polynomials $R_1\klk R_m$ of \eqref{eq: prescribed
coeff: Rj} and let $\delta_{\bfs L}$ and $D_{\bfs L}$ be defined as
in \eqref{eq: prescribed coeff: def deltaL}. Theorem \ref{theorem:
prescribed coeff: geometry} shows that the projective closure
$\mathrm{pcl}(V)$ of $V$ and its set of points at infinity
$\mathrm{pcl}(V)^{\infty}$ satisfy all the requirements of
\cite[Corollary 8.4]{CaMaPr13}. Then \eqref{eq: estimates: rat
points CML} implies
\begin{align*}
\big||\mathrm{pcl}(V)(\fq)|-p_{n-m}\big|\le& 14 D_{\boldsymbol{L}}^3
\delta_{\boldsymbol{L}}^2q^{n-m-1},  \\
\big||\mathrm{pcl}(V)^{\infty}(\fq)|-p_{n-m-1}\big|\le& 14
D_{\boldsymbol{L}}^3 \delta_{\boldsymbol{L}}^2q^{n-m-2}.
\end{align*}

Arguing as in \eqref{eq: estimates: Vn todos} we obtain
\begin{align}
    \big||V(\fq)|-q^{n-m}\big|& \le
      \big||\mathrm{pcl}(V)(\fq)|-p_{n-m}\big|+
      \big||\mathrm{pcl}(V)^{\infty}(\fq)|-p_{n-m-1}\big|
      \nonumber\\[1ex] &\le (q+1)14 D_{\bfs L}^3 \delta_{\bfs L}^2
q^{n-m-2}\le 21 D_{\bfs L}^3 \delta_{\bfs L}^2 q^{n-m-1}. \label{eq:
prescribed coeff: estimate aux}
  \end{align}
We are now ready to state the main result of the section.
\begin{theorem}\label{theorem: prescribed coeff: estimate 2}
For $q>n$ and $m+2\le r \le n-m$, we have
$$
\big||\mathcal{A}_{\bfs\lambda}|
-\mathcal{T}(\bfs\lambda)\,q^{n-m}\big|\leq
q^{n-m-1}\big(21\,\mathcal{T}(\bfs\lambda)\, D_{\bfs
L}^3\delta_{\bfs L}^2+ n(n-1)\big).
$$
\end{theorem}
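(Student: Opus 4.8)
The plan is to recycle, almost verbatim, the bookkeeping that led from \eqref{eq: estimates: Vn todos} to Theorem~\ref{theorem: estimates: bound with discr locus 1}, the difference being that in the sparse regime $m+2\le r\le n-m$ one has at one's disposal the sharper point count \eqref{eq: prescribed coeff: estimate aux} for the affine variety $V\subset\A^n$ defined by the polynomials $R_1\klk R_m$ of \eqref{eq: prescribed coeff: Rj}. First I would recall that those polynomials are precisely the ones attached to $\mathcal{A}_{\bfs\lambda}$ in Corollary~\ref{coro: fact patterns: systems pattern lambda}, so that identity \eqref{eq: estimates: A_lambda^sq}, namely $|\mathcal{A}_{\bfs\lambda}^{sq}|=\mathcal{T}(\bfs\lambda)\,|V^{\neq}(\fq)|$ with $V^{\neq}(\fq):=V(\fq)\setminus V^{=}(\fq)$, remains valid here; likewise the purely formal chain of (in)equalities \eqref{eq: estimates: estimate A_lambda} carries over without change and gives
\[
\big||\mathcal{A}_{\bfs\lambda}|-\mathcal{T}(\bfs\lambda)\,q^{n-m}\big|
\le \mathcal{T}(\bfs\lambda)\,\big||V(\fq)|-q^{n-m}\big|
+\big||\mathcal{A}_{\bfs\lambda}^{nsq}|-\mathcal{T}(\bfs\lambda)\,|V^{=}(\fq)|\big|.
\]

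It then remains to bound the two summands on the right. For the first one I would simply quote \eqref{eq: prescribed coeff: estimate aux}, which yields $\big||V(\fq)|-q^{n-m}\big|\le 21\,D_{\bfs L}^3\,\delta_{\bfs L}^2\,q^{n-m-1}$; this is the step that genuinely uses the hypothesis $m+2\le r\le n-m$, because it guarantees, through Theorem~\ref{theorem: prescribed coeff: geometry} together with $n-r-1\le n-m-3$ and $n-r-2\le n-m-4$, that the singular loci of $\mathrm{pcl}(V)$ and of $\mathrm{pcl}(V)^{\infty}$ have codimension at least $3$ in the respective (at least $2$--dimensional) varieties, so that the estimate \eqref{eq: estimates: rat points CML} of \cite[Corollary~8.4]{CaMaPr13} applies to both. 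For the second summand I would use \eqref{eq: estimates: rough bound A_lambda^nsq}, giving $\big||\mathcal{A}_{\bfs\lambda}^{nsq}|-\mathcal{T}(\bfs\lambda)\,|V^{=}(\fq)|\big|\le|\mathcal{A}_{\bfs\lambda}^{nsq}|$, and then \eqref{eq: estimates: upper bound discr locus}, which bounds $|\mathcal{A}_{\bfs\lambda}^{nsq}|$ by $n(n-1)\,q^{n-m-1}$ since the discriminant locus of $\mathcal{A}$ is cut out in an $(n-m)$--dimensional affine space by a hypersurface of degree $n(n-1)$. Summing the two bounds gives exactly $q^{n-m-1}\big(21\,\mathcal{T}(\bfs\lambda)\,D_{\bfs L}^3\,\delta_{\bfs L}^2+n(n-1)\big)$, as desired.

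Because \eqref{eq: prescribed coeff: estimate aux} and all the other ingredients are already established, there is no real obstacle: the substantive content has been pushed upstream into Theorem~\ref{theorem: prescribed coeff: geometry} and the point-counting theorem of \cite{CaMaPr13}, and what is left is a mechanical recombination. The only minor points I would make explicit are that \eqref{eq: estimates: A_lambda^sq}, \eqref{eq: estimates: estimate A_lambda}, \eqref{eq: estimates: rough bound A_lambda^nsq} and \eqref{eq: estimates: upper bound discr locus} nowhere use the restriction $3\le r$ in an essential way, and that in the degenerate case $\delta_{\bfs L}=1$ --- which forces $m=1$, $i_1=1$ and $D_{\bfs L}=0$, so that $V$ is an affine hyperplane --- one has $|V(\fq)|=q^{n-1}$ and \eqref{eq: prescribed coeff: estimate aux} holds as a trivial equality, whence the claimed bound still follows.
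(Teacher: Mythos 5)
Your proof is correct and follows essentially the same route as the paper: combine \eqref{eq: estimates: estimate A_lambda} and \eqref{eq: estimates: rough bound A_lambda^nsq} with the sparse-case point count \eqref{eq: prescribed coeff: estimate aux} and the discriminant bound \eqref{eq: estimates: upper bound discr locus}. Your explicit verification that $r\ge m+2$ forces $n-r-1\le n-m-3$ and $n-r-2\le n-m-4$ (so that \cite[Corollary~8.4]{CaMaPr13} applies to both $\mathrm{pcl}(V)$ and $\mathrm{pcl}(V)^\infty$), and your treatment of the degenerate case $\delta_{\bfs L}=1$ excluded by the $\delta\ge 2$ hypothesis of \eqref{eq: estimates: rat points CML}, are welcome additions the paper leaves implicit.
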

\begin{proof}
Combining \eqref{eq: estimates: estimate A_lambda} and \eqref{eq:
estimates: rough bound A_lambda^nsq} with \eqref{eq: prescribed
coeff: estimate aux}, we see that
\begin{align*}
\big||\mathcal{A}_{\bfs\lambda}|
-\mathcal{T}(\bfs\lambda)\,q^{n-m}\big|&\leq
\mathcal{T}(\bfs\lambda)\,\big||V(\fq)|-q^{n-m}\big|+
|\mathcal{A}_{\bfs\lambda}^{nsq}|\\
&\leq \mathcal{T}(\bfs\lambda)\,21 D_{\bfs L}^3 \delta_{\bfs L}^2
q^{n-m-1}+ |\mathcal{A}_{\bfs\lambda}^{nsq}|.
\end{align*}
The statement of the theorem follows immediately from \eqref{eq:
estimates: upper bound discr locus}.
\end{proof}

Finally, we apply Theorem \ref{theorem: prescribed coeff: estimate
2} to any family consisting of the elements of $\mathcal{P}$ with
certain prescribed coefficients. More precisely, let
$\mathcal{A}^m:= \mathcal{A}^m(\mathcal{I},\bfs \alpha)$ be the
family of polynomials of \eqref{eq: estimates: family A^m}.
For a given factorization pattern $\bfs\lambda$ and the polynomial
$G\in\fq[\bfs X,T]$ of \eqref{eq: fact patterns: pol G}, by
\eqref{eq: estimates: polynomial syst for presc coeff} we see that
$f:=G(\bfs x,T)$ belongs to $\mathcal{A}^m_{\bfs \lambda}$ if and
only if there exists $\bfs x$ of type $\bfs\lambda$ with
$(-1)^{i_j}\Pi_{i_j}(\bfs Y(\bfs x))=\alpha_{i_j}$ for $1\le j\le
m$. If $\delta_{\mathcal{I}}:=i_1\cdots i_m$ and
$D_{\mathcal{I}}:=\sum_{j=1}^m(i_j-1)$, then by Theorem
\ref{theorem: prescribed coeff: estimate 2} we obtain the following
result. 
\begin{corollary} If $q > n$ and $i_m\le n-m-2$, then we
have
$$\big||\mathcal{A}^m_{\bfs
\lambda}|-\mathcal{T}(\bfs\lambda)\,q^{n-m}\big|\le
q^{n-m-1}\big(21\,\mathcal{T}(\bfs\lambda)\,D_{\mathcal{I}}^3\,
\delta_{\mathcal{I}}^2 + n(n-1)\big).$$
\end{corollary}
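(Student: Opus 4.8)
The plan is to specialize Theorem \ref{theorem: prescribed coeff: estimate 2} to the particular linear family $\mathcal{A}^m(\mathcal{I},\bfs\alpha)$ of \eqref{eq: estimates: family A^m}, which is exactly the setting in which the linear forms $L_j$ are the coordinate projections $A_{i_j}\mapsto a_{i_j}$. First I would recall, via \eqref{eq: estimates: polynomial syst for presc coeff} (itself a consequence of Lemma \ref{lemma: fact patterns: sym pols and pattern lambda}), that $f:=G(\bfs x,T)\in\mathcal{P}$ lies in $\mathcal{A}^m_{\bfs\lambda}$ precisely when $\bfs x$ is of type $\bfs\lambda$ and $(-1)^{i_j}\Pi_{i_j}(\bfs Y(\bfs x))=\alpha_{i_j}$ for $1\le j\le m$. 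In other words, the polynomials $R_j$ of \eqref{eq: prescribed coeff: Rj} attached to this family are, up to sign and the additive constant $\alpha_{i_j}$, simply the elementary symmetric polynomials $\Pi_{i_1}\klk\Pi_{i_m}$; in the notation of \eqref{eq: prescribed coeff: def deltaL} this means $\delta_{\bfs L}=\delta_{\mathcal{I}}=i_1\cdots i_m$ and $D_{\bfs L}=D_{\mathcal{I}}=\sum_{j=1}^m(i_j-1)$.

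The second step is to check that the hypotheses of Theorem \ref{theorem: prescribed coeff: estimate 2} are met. The requirement $m+2\le r\le n-m$ translates, after setting $n-r$ to be the largest index needed so that all $i_j\le n-r$, into the condition $i_m\le n-m-2$ appearing in the statement: indeed one takes $r:=n-i_m$ if $i_m\le n-m-2$, so that $R_1\klk R_m$ are linear forms in $\Pi_1\klk\Pi_{n-r}=\Pi_1\klk\Pi_{i_m}$ with linearly independent (in fact distinct coordinate) linear parts, and $r=n-i_m\ge m+2$ holds. The condition $q>n$ is inherited directly. Having verified this, Theorem \ref{theorem: prescribed coeff: estimate 2} applies verbatim and yields
$$
\big||\mathcal{A}^m_{\bfs\lambda}|-\mathcal{T}(\bfs\lambda)\,q^{n-m}\big|\le q^{n-m-1}\big(21\,\mathcal{T}(\bfs\lambda)\,D_{\bfs L}^3\delta_{\bfs L}^2+n(n-1)\big),
$$
and substituting $\delta_{\bfs L}=\delta_{\mathcal{I}}$, $D_{\bfs L}=D_{\mathcal{I}}$ gives exactly the claimed bound.

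There is essentially no obstacle here: the corollary is a pure specialization, and the only point requiring a moment's care is the bookkeeping that identifies the discrete invariants $\delta_{\bfs L},D_{\bfs L}$ of the general linear family with the invariants $\delta_{\mathcal{I}},D_{\mathcal{I}}$ of the prescribed-coefficient family, together with the translation of the codimension hypothesis $m+2\le r\le n-m$ into $i_m\le n-m-2$. I would therefore simply write: apply \eqref{eq: estimates: polynomial syst for presc coeff} to reduce $|\mathcal{A}^m_{\bfs\lambda}|$ to the count of $\fq$--rational points of type $\bfs\lambda$ of the variety $V$ defined by $R_j:=(-1)^{i_j}\Pi_{i_j}-\alpha_{i_j}$ $(1\le j\le m)$, observe that with $r:=n-i_m$ these are the polynomials of \eqref{eq: prescribed coeff: Rj} for a family satisfying $m+2\le r\le n-m$, and invoke Theorem \ref{theorem: prescribed coeff: estimate 2} with $\delta_{\bfs L}=i_1\cdots i_m=\delta_{\mathcal{I}}$ and $D_{\bfs L}=\sum_{j=1}^m(i_j-1)=D_{\mathcal{I}}$ to conclude.
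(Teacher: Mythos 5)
Your proof is correct and follows precisely the paper's argument: the corollary is a direct specialization of Theorem \ref{theorem: prescribed coeff: estimate 2} to the prescribed-coefficient family, with the identifications $\delta_{\bfs L}=\delta_{\mathcal{I}}$ and $D_{\bfs L}=D_{\mathcal{I}}$. Your explicit verification that $r:=n-i_m$ satisfies $m+2\le r\le n-m$ (using $i_m\ge m$, which is automatic from $i_1<\cdots<i_m$, together with the hypothesis $i_m\le n-m-2$) is the bookkeeping the paper leaves implicit.
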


Comparing the estimates of Theorems \ref{theorem: estimates: bound
with discr locus 1} and \ref{theorem: prescribed coeff: estimate 2},
we observe that the latter shows that $|\mathcal{A}_{\bfs
\lambda}|=\mathcal{T}(\bfs \lambda)\,q^{n-m}
+\mathcal{O}(q^{n-m-1})$, while the former only asserts that
$|\mathcal{A}_{\bfs \lambda}|=\mathcal{T}(\bfs \lambda)\,q^{n-m}
+\mathcal{O}(q^{n-m-1/2})$. Indeed, for $q \geq (11 D_{\bfs
L}^2\delta_{\bfs L})^2$ the upper bound for $\big||{\mathcal A_{\bfs
\lambda}}|-\mathcal{T}(\bfs \lambda)\,q^{n-m}\big|$ of Theorem
\ref{theorem: prescribed coeff: estimate 2} is smaller than that of
Theorem \ref{theorem: estimates: bound with discr locus 1}.
Furthermore, Theorem \ref{theorem: prescribed coeff: estimate 2}
holds without any restriction on the characteristic $p$ of $\fq$,
while Theorem \ref{theorem: estimates: bound with discr locus 1} is
valid only for $p>2$. On the other hand, Theorem \ref{theorem:
estimates: bound with discr locus 1} allows a larger range of values
of $m$, namely $1 \le m \le n-3$, while Theorem \ref{theorem:
prescribed coeff: estimate 2} requires that $1 \le m \le n/2-1$. We
may summarize these remarks by saying that both results are somewhat
complementary.

\bibliographystyle{alpha}
\bibliography{refs1,finite_fields,Newref}

\begin{thebibliography}{CMPP14}

\bibitem[CGH91]{CaGaHe91}
L.~Caniglia, A.~Galligo, and J.~Heintz.
\newblock Equations for the projective closure and effective {Nullstellensatz}.
\newblock {\em Discrete Appl. Math.}, 33:11--23, 1991.

\bibitem[CM06]{CaMa06}
A.~Cafure and G.~Matera.
\newblock Improved explicit estimates on the number of solutions of equations
  over a finite field.
\newblock {\em Finite Fields Appl.}, 12(2):155--185, 2006.

\bibitem[CM07]{CaMa07}
A.~Cafure and G.~Matera.
\newblock An effective {Bertini} theorem and the number of rational points of a
  normal complete intersection over a finite field.
\newblock {\em Acta Arith.}, 130(1):19--35, 2007.

\bibitem[CMP12a]{CaMaPr13}
A.~Cafure, G.~Matera, and M.~Privitelli.
\newblock Polar varieties, {Bertini's} theorems and number of points of
  singular complete intersections over a finite field.
\newblock Preprint {\tt arXiv:1209.4938 [math.AG]}, 2012.

\bibitem[CMP12b]{CaMaPr12}
A.~Cafure, G.~Matera, and M.~Privitelli.
\newblock Singularities of symmetric hypersurfaces and {Reed}-{Solomon} codes.
\newblock {\em Adv. Math. Commun.}, 6(1):69--94, 2012.

\bibitem[CMPP14]{CeMaPePr14}
E.~Cesaratto, G.~Matera, M.~{P\'erez}, and M.~Privitelli.
\newblock On the value set of small families of polynomials over a finite
  field, {I}.
\newblock {\em J. Combin. Theory Ser. A}, 124(4):203--227, 2014.

\bibitem[Coh70]{Cohen70}
S.~Cohen.
\newblock The distribution of polynomials over finite fields.
\newblock {\em Acta Arith.}, 17:255--271, 1970.

\bibitem[Coh72]{Cohen72}
S.~Cohen.
\newblock Uniform distribution of polynomials over finite fields.
\newblock {\em J. Lond. Math. Soc. (2)}, 6(1):93--102, 1972.

\bibitem[CvM92]{ChDrMa92}
Z.~{Chatzidakis}, L.~{van den Dries}, and A.~{Macintyre}.
\newblock Definable sets over finite fields.
\newblock {\em J. Reine Angew. Math.}, 427:107--135, 1992.

\bibitem[Dan94]{Danilov94}
V.~Danilov.
\newblock Algebraic varieties and schemes.
\newblock In I.R. Shafarevich, editor, {\em Algebraic Geometry I}, volume~23 of
  {\em Encyclopaedia of Mathematical Sciences}, pages 167--307. Springer,
  Berlin Heidelberg New York, 1994.

\bibitem[Eis95]{Eisenbud95}
D.~Eisenbud.
\newblock {\em Commutative Algebra with a View Toward Algebraic Geometry},
  volume 150 of {\em Grad. Texts in Math.}
\newblock Springer, New York, 1995.

\bibitem[FHJ94]{FrHaJa94}
M.~Fried, D.~Haran, and M.~Jarden.
\newblock Effective counting of the points of definable sets over finite
  fields.
\newblock {\em Israel J. Math.}, 85(1-3):103--133, 1994.

\bibitem[FS84]{FrSm84}
M.~Fried and J.~Smith.
\newblock Irreducible discriminant components of coefficient spaces.
\newblock {\em Acta Arith.}, 44(1):59--72, 1984.

\bibitem[Ful84]{Fulton84}
W.~Fulton.
\newblock {\em Intersection Theory}.
\newblock Springer, Berlin Heidelberg New York, 1984.

\bibitem[GHP99]{GaHoPa99}
S.~{Gao}, J.~{Howell}, and D.~{Panario}.
\newblock Irreducible polynomials of given forms.
\newblock In {\em Finite fields: theory, applications, and algorithms. Fourth
  international conference, Waterloo, Ontario, Canada, August 12--15, 1997},
  pages 43--54. Amer. Math. Soc., Providence, RI, 1999.

\bibitem[GL02]{GhLa02a}
S.~Ghorpade and G.~Lachaud.
\newblock {\'Etale} cohomology, {Lefschetz} theorems and number of points of
  singular varieties over finite fields.
\newblock {\em Mosc. Math. J.}, 2(3):589--631, 2002.

\bibitem[Har92]{Harris92}
J.~Harris.
\newblock {\em Algebraic Geometry: a first course}, volume 133 of {\em Grad.
  Texts in Math.}
\newblock Springer, New York Berlin Heidelberg, 1992.

\bibitem[Hei83]{Heintz83}
J.~Heintz.
\newblock {Definability} and fast quantifier elimination in algebraically
  closed fields.
\newblock {\em Theoret. Comput. Sci.}, 24(3):239--277, 1983.

\bibitem[Kun85]{Kunz85}
E.~Kunz.
\newblock {\em Introduction to Commutative Algebra and Algebraic Geometry}.
\newblock Birkh{\"a}user, Boston, 1985.

\bibitem[MPP13]{MaPePr13}
G.~Matera, M.~{P\'erez}, and M.~Privitelli.
\newblock On the value set of small families of polynomials over a finite
  field, {II}.
\newblock Preprint {\tt arXiv:1310.3293 [math.NT]}, to appear in Acta Arith.,
  2013.

\bibitem[Sha94]{Shafarevich94}
I.R. Shafarevich.
\newblock {\em Basic Algebraic Geometry: {Varieties} in Projective Space}.
\newblock Springer, Berlin Heidelberg New York, 1994.

\bibitem[Ste87]{Stepanov87}
S.~Stepanov.
\newblock The number of irreducible polynomials of a given form over a finite
  field.
\newblock {\em Math. Notes}, 41:165--169, 1987.

\bibitem[Vog84]{Vogel84}
W.~Vogel.
\newblock {\em Results on {B\'ezout}'s theorem}, volume~74 of {\em Tata Inst.
  Fundam. Res. Lect. Math.}
\newblock Tata Inst. Fund. Res., Bombay, 1984.

\end{thebibliography}

\end{document}

%
%
\subsubsection{A more refined bound}
In order to obtain a refined upper bound on
$|\mathcal{A}_{\bfs\lambda}^{nsq}|$, we shall use terminology from
the theory of partitions of integers (see, e.g., ****).

A {\em partition} of a strictly positive integer $m$ is set
$P:=\{r_1\klk r_s\}$ of strictly positive integers with $r_1\plp
r_s=m$. The numbers $r_1\klk r_s$ are called the {\em parts} of the
partition. We shall denote $D_P:=\sum_{i=1}^s(r_i-1)$ and
$\delta_P:=\prod_{i=1}^sr_i!$. For notational convenience, we extend
this definition to $n=0$, asserting that $0$ admits only one
partition $P_0$, and define $\delta_{P_0}:=1$ and $D_{P_0}:=0$.

For a polynomial $f\in \mathcal{P}$ which is not square--free, we
shall group repeated roots of $f$ in $\cfq$. Thus we obtain a
partition $P:=\{r_1\klk r_s\}$ of $n$, where $s:=|P|$ is the number
of distinct roots of $f$ in $\cfq$ and each part $r_i$ represents
the multiplicity of the $i$th root of $f$. If
$f\in\mathcal{P}_{\bfs\lambda}$, then this partition actually
induces a multi--partition $\bfs P:=(P_1\klk P_n)$ of
$\bfs\lambda:=(\lambda_1\klk \lambda_n)$, namely $P_i$ is a
partition of $\lambda_i$ for $1\le i\le n$. We define $|\bfs
P|:=\sum_{i=1}^n|P_i|$, $\delta_{\bfs P}:=\prod_{i=1}^n\delta_{P_i}$
and $D_{\bfs P}:=\sum_{i=1}^nD_{P_i}$.

Lemma \ref{lemma: fact patterns: G(x,T) with fact pat lambda}
asserts that for a square--free polynomial $f\in \mathcal{P}$, there
are $w(\bfs\lambda)$ elements $\bfs x\in\fq^n$ associated to $f$ as
in Section \ref{section: fact patterns and roots}. If we now
consider that $f$ has only one root of multiplicity $r$, then the
corresponding number of elements is $w(\bfs\lambda)/r!$. Arguing in
this way, we obtain the following remark.
\begin{remark}\label{remark: estimates: number permut multiple roots}
For a non--square--free polynomial $f\in\mathcal{P}_{\bfs\lambda}$
whose roots induce a multi--partition $\bfs P$ of $\bfs\lambda$,
there are $w(\bfs\lambda)/\delta_{\bfs P}$ elements $\bfs x\in\fq^n$
associated to $f$.
\end{remark}

Next we obtain an upper bound on the number $|\mathcal{A}_{\bfs P}|$
of non--square--free elements of $\mathcal{A}_{\bfs\lambda}$ whose
roots induce a multi--partition $\bfs P$ of $\bfs\lambda$.
\begin{lemma}
Let $\bfs P$ be a multi--partition of $\bfs\lambda$ with $D_{\bfs
P}\le n-m$. Then
$$|\mathcal{A}_{\bfs P}|\le \frac{D_{\bfs P}\delta_{\bfs L}
\delta_{\bfs P}}{w(\bfs\lambda)}q^{n-m-D_{\bfs P}}.$$
\end{lemma}
\begin{proof}
Remark \ref{remark: estimates: number permut multiple roots} asserts
that each element $f\in \mathcal{A}_{\bfs P}$ is associated to
$w(\bfs\lambda)/\delta_{\bfs P}$ elements $\bfs x\in\fq^n$.
Therefore, the lemma will be shown if we prove that there are at
most $D_{\bfs P}\delta_{\bfs L}q^{n-m-D_{\bfs P}}$ such elements.

 Let
$Y_1\klk Y_n$ are the linear forms of (\ref{eq: estimates:
definition Y_ell}). Let $f\in\mathcal{A}_{\bfs P}$ and let $\bfs
x\in\fq^n$ be an element associated to $f$. There exists exactly
$D_{\bfs P}$ different pairs $(i,j)\in\{1\klk n\}^2$ with $i<j$ such
that $Y_i(\bfs x)=Y_j(\bfs x)$. Denote by $I_{\bfs P}\subset\{1\klk
n\}^2$ the set consisting of all such pairs.  Then any other element
of $\fq^n$ associated to $f$ is obtained by means of a permutation
of type $\bfs \lambda$ applied to $\bfs x$. As this reasoning can be
done for each $f\in\mathcal{A}_{\bfs P}$, we conclude that the set
of $\fq$--rational points of
$$V_{\bfs P}:=V
\bigcap_{(i,j)\in I_{\bfs P}}\{Y_i=Y_j\}$$
contains at least one element $\bfs x\in\fq^n$ associated to each
polynomial $f\in \mathcal{A}_{\bfs P}$. We conclude that
$|\mathcal{A}_{\bfs P}|\le|V_{\bfs P}(\fq)|$.

\end{proof}